\documentclass[12pt, oneside]{amsart}

\usepackage[T1]{fontenc}
\usepackage[utf8]{inputenc}

\usepackage{amssymb}
\usepackage{amsthm}
\usepackage{amsmath}
\usepackage{mathrsfs}

\usepackage[a4paper,left=2cm,top=3cm,right=2cm,bottom=2cm]{geometry}

\usepackage{enumerate}

\usepackage{indentfirst}

\usepackage{hyperref}
\hypersetup{colorlinks=true,allcolors=black}  

\usepackage{tikz-cd}
\tikzcdset{arrow style=math font}

\newcommand\actionarrow[2]{#1 \begin{tikzcd}[ampersand replacement=\&, cramped, sep=small] \& \arrow[loop left] \phantom{X} \end{tikzcd} \hspace{-9pt} #2}

\DeclareMathOperator{\differential}{d}
\DeclareMathOperator{\diam}{diam}
\DeclareMathOperator{\codim}{codim}

\DeclareMathOperator{\symrank}{symrank}

\DeclareMathOperator{\vol}{vol}

\DeclareMathOperator{\ric}{Ric}

\DeclareMathOperator*{\spannn}{span}

\DeclareMathOperator{\reg}{reg}

\newcommand{\Ho}{H}

\newcommand{\blup}[2]{\mathrm{Bl}_{#1}\hspace{1pt}{#2}}
\newcommand{\Slash}{/\mkern-4mu/}

\mathchardef\ordinarycolon\mathcode`\:
\mathcode`\:=\string"8000
\begingroup \catcode`\:=\active
  \gdef:{\mathrel{\mathop\ordinarycolon}}
\endgroup

\theoremstyle{definition}
\newtheorem{example}{Example}[section]
\newtheorem{definition}[example]{Definition}

\theoremstyle{plain}
\newtheorem{prop}[example]{Proposition}
\newtheorem{thm}[example]{Theorem}
\newtheorem{lemma}[example]{Lemma}
\newtheorem{cor}[example]{Corollary}

\newtheorem{thmx}{Theorem}
\newtheorem{corx}[thmx]{Corollary} 

\newcommand{\dif}[0]{\differential\hspace{-1pt}}

\newcommand{\metric}{\ensuremath{\mathrm{g}}}

\newcommand{\fol}[0]{\mathcal{F}}
\newcommand{\vct}[0]{\mathfrak{X}}
\newcommand{\trs}[0]{\mathfrak{l}}

\newcommand{\RR}[0]{\mathbb{R}}

\DeclareMathOperator{\Ann}{Ann}
\newcommand{\Gr}[0]{\mathrm{Gr}\hspace{1pt}}

\hypersetup{
  pdftitle={Infinitesimally polar foliations via geometric resolutions},
  pdfauthor={Francisco C. Caramello Jr.; Laura Ribeiro dos Santos}
}

\title[Geometric resolutions]{Infinitesimally polar Killing foliations via geometric resolutions}

\author{Francisco C.~Caramello Jr.}
\address{Departamento de Matemática, Universidade Federal de Santa Catarina, R. Eng. Agr. Andrei Cristian Ferreira, 88040-900, Florianópolis - SC, Brazil}
\email{francisco.caramello@ufsc.br}
\author{Laura Ribeiro dos Santos}
\address{Instituto de Matemática e Estatística, Universidade de São Paulo, Rua do Matão, 1010, 05508-090, São Paulo - SP, Brazil}
\email{laura.ribeiro.santos@usp.br}

\date{}
\keywords{Singular Riemannian foliations, geometric resolution, infinitesimal polarity, Killing foliations, strong Molino conjecture, basic cohomology, transverse curvature}

\subjclass[2020]{Primary 53C12; Secondary 53C20, 55N91, 57R18, 57R30}

\begin{document}

\begin{abstract}
We establish the strong Molino conjecture for complete infinitesimally polar singular Riemannian foliations and identify their Molino sheaves with those of their geometric resolutions. The resolution preserves basic and equivariant basic cohomology and transverse curvature bounds. These comparisons yield orbifold models and approximations of leaf-closure spaces, Euler localization, closed-leaf existence and Euler positivity under positive curvature, and transverse sphere theorems. Maximal defect gives a closed-ball leaf-closure space. Further applications include fundamental-group growth bounds and closedness under nonpositive transverse Ricci curvature that is negative definite at one regular point.
\end{abstract}

\maketitle
\setcounter{tocdepth}{1}
\tableofcontents

\section{Introduction}

When $\fol$ is a complete regular Riemannian foliation, the closures of its leaves form a singular foliation $\overline{\fol}$. Blowing a minimal stratum up projectively reduces the depth of the foliation, and finitely many steps produce a regular Riemannian foliation $\blup{}{\fol}$. In \cite{molinodesing}, P.~Molino used this approach on $\overline{\fol}$, which later formed part of the development of the structural theory subsequently presented in \cite{molino}, that describes $\overline{\fol}$ via a locally constant sheaf $\mathscr{C}_\fol$ of transverse Killing fields. When $\mathscr{C}_\fol$ is globally constant, $\fol$ is a Killing foliation. The iterated blow-up construction was later extended by M.~Alexandrino to arbitrary singular Riemannian foliations \cite[Theorem~1.2]{alexdesing}. A central part of the construction is the choice of adapted metrics on the blow-up spaces, which involves gluing locally defined metrics, and hence has the downside of altering the geometry of the leaf space near the desingularized part.

In our previous work \cite{blupcaramello}, we studied projective blow-ups from the viewpoints of dynamics and basic cohomology. For a complete Riemannian foliation $\fol$, we compared the Molino sheaf of $\blup{}{\fol}$ with the inverse image of $\mathscr{C}_\fol$ \cite[Proposition~3.2 and Theorem~4.3]{blupcaramello}, obtaining in particular that a singular Killing foliation on a compact manifold blows up to a regular Killing foliation \cite[Corollaries~4.4--4.5]{blupcaramello}. Combining this comparison with the change of the ordinary Euler characteristic under real projective blow-up gave closedness criteria, for instance, $\overline{\fol}=\fol$ when the ambient Euler characteristic is nonzero and all singular strata have odd codimension \cite[Theorem~4.6]{blupcaramello}. The cohomological comparison in that work, however, highlights another downside of the desingularization via blow-ups: the basic version of the usual comparison, relating the basic cohomology of $\fol$ and $\blup{}{\fol}$, requires a strong hypothesis concerning the injectivity of the induced blow-down map on the level of basic forms \cite[Theorem D]{blupcaramello}.

A complementary development concerns desingularizations built from horizontal sections. In this vein, the Grassmannian constructions for polar geometry in \cite{boualem,tobenparallel} were applied by A.~Lytchak in order to build a similar desingularization technique for a large class of Riemannian foliations; namely, the so called infinitesimally polar foliations \cite[Sections~2.4 and~4.2]{lytchak}. This desingularization, known as \emph{Lytchak's canonical geometric resolution}, has far better topological and geometric properties. By replacing a point with the space of sections of its infinitesimal foliation, it produces a smooth submanifold $\Gr M$ of a Grassmannian bundle, with a regular Riemannian foliation $\Gr\fol$. The resolution map $\Gr M \to M$ is proper, is a foliated diffeomorphism over the regular locus, and induces a bijection between leaf spaces. Its stronger metric preservation comes with the precise restriction of infinitesimal polarity, identified in \cite[Theorem~1.4]{lytchakthorbergsson}: infinitesimal polarity at a point is equivalent to local closedness and the existence of a local Riemannian orbifold quotient; it is also equivalent to local boundedness of the regular transverse sectional curvatures. Alexandrino and Lytchak subsequently proved that pullback through the geometric resolution identifies the sheaves of basic differential forms \cite[Proposition~3.5]{alexlytchak}, supplying an isomorphism between the basic cohomologies of $\fol$ and $\Gr\fol$.

Alexandrino and M.~Radeschi established in \cite{alexradeschiclosure} the Molino conjecture: the closures of the leaves of every complete singular Riemannian foliation form another singular Riemannian foliation. The stronger assertion that every prescribed section of the Molino sheaf admits a smooth representative, known as the \emph{strong Molino conjecture}, is still open in general \cite[p.~215]{molino}\cite[Section~9.4]{surveyalexcaramello}\cite[Section~3.1]{caramellosingularequivariant}. It was established for orbit-like foliations in \cite[Theorem~3.4]{caramellosingularequivariant}. Here we establish it for the infinitesimally polar class in Theorem~\ref{theorem strong molino infinitesimally polar}, using Lytchak's geometric resolution. We also show that the Molino sheaves of $\fol$ and $\Gr\fol$ are isomorphic:

\begin{thmx}[Theorems~\ref{theorem strong molino infinitesimally polar} and~\ref{theorem molino resolution}]
Let $(M,\fol,\metric)$ be a complete infinitesimally polar singular Riemannian foliation, and let $\rho\colon \Gr M\to M$ be its geometric resolution. Then:
\begin{enumerate}[(i)]
\item The Molino sheaf $\mathscr C_{\fol}$ is smooth; in fact, every local section has a unique smooth horizontal foliated representative.
\item There is a canonical isomorphism of sheaves of Lie algebras $\rho^{-1}\mathscr C_{\fol}\cong\mathscr C_{\Gr \fol}$.
\end{enumerate}
\end{thmx}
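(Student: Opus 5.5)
The plan is to transfer everything through the resolution $\rho\colon \Gr M\to M$, where Molino theory for regular foliations is classical. By Lytchak's construction, $\Gr\fol$ is a complete regular Riemannian foliation, and $\rho$ is proper and induces a bijection of leaf spaces. Hence leaf closures correspond, with $\rho^{-1}(\overline{L})=\overline{\rho^{-1}(L)}$ by properness. By classical Molino theory, $\mathscr C_{\Gr\fol}$ is a locally constant sheaf of Lie algebras of transverse Killing fields whose orbits are the closures of the leaves of $\Gr\fol$. The regular part $M_0$ of $M$ is open and dense, and $\rho$ restricts to a foliated isometry $\rho^{-1}(M_0)\to M_0$. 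Since $\Gr\fol$ is regular, $\mathscr C_{\Gr\fol}$ restricted to $\rho^{-1}(M_0)$ is already locally constant, and on $M_0$ it agrees with $\mathscr C_\fol|_{M_0}$. Both sheaves are locally constant and are determined by their restrictions to dense open sets, because Killing fields are determined by their values on open sets. So the goal is to show that $\rho$ pushes sections of $\mathscr C_{\Gr\fol}$ down to smooth horizontal foliated fields on $M$, and that $\mathscr C_\fol$ consists exactly of these.

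First I would construct a push-down map. Take an open set $U\subset M$ and a section $\xi$ of $\mathscr C_{\Gr\fol}$ over $\rho^{-1}(U)$. Its flow preserves $\Gr\fol$ and the closures of its leaves, and it acts by transverse isometries. On $\rho^{-1}(U\cap M_0)$ it descends to a transverse Killing field $\bar\xi$ on $U\cap M_0$, which is tangent to leaf closures. To extend $\bar\xi$ smoothly across the singular strata, I would use the Alexandrino--Lytchak identification of basic forms \cite[Proposition~3.5]{alexlytchak}. Infinitesimal polarity gives local orbifold quotients $U\to U/\fol$ \cite[Theorem~1.4]{lytchakthorbergsson}, and $\rho$ induces an isometry of these local quotients. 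Thus $\xi$ induces a Killing field on the local quotient orbifold, which is the same on both sides. A Killing field on a Riemannian orbifold lifts to the orbifold chart, and its horizontal lift through the chart's submersion gives a smooth horizontal foliated field on $U$. This lift is unique as a horizontal representative, because two such fields agree on the dense set $U\cap M_0$. This yields a sheaf morphism $\rho_*\mathscr C_{\Gr\fol}\to\mathscr C_\fol$ with smooth horizontal representatives.

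Second, I would show that this morphism realizes $\mathscr C_\fol$. Recall that $\mathscr C_\fol$ is defined via the closure foliation from \cite{alexradeschiclosure}, as germs of transverse Killing fields tangent to leaf closures that commute with basic fields. Given a local section $X$ of $\mathscr C_\fol$ near $x$, restrict it to $M_0$ and lift to $\rho^{-1}(U\cap M_0)$, getting a transverse Killing field commuting with the $\Gr\fol$-basic fields there. Because $\mathscr C_{\Gr\fol}$ is locally constant on a neighbourhood of the compact fibre $\rho^{-1}(x)$, it suffices to match $X$ on a regular open set. There, $\mathscr C_\fol|_{M_0}\cong\mathscr C_{\Gr\fol}|_{\rho^{-1}(M_0)}$ by the isometry, so $X$ coincides with the push-down of a unique section of $\mathscr C_{\Gr\fol}$, and by uniqueness of smooth extensions $X$ equals that push-down everywhere. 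This proves (i). For (ii), the adjunction morphism $\rho^{-1}\mathscr C_\fol\to\mathscr C_{\Gr\fol}$ given by lifting is a stalkwise bijective Lie algebra map, with injectivity and surjectivity both coming from density of the regular part, so it is an isomorphism.

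The main obstacle is locality near singular fibres. $\rho^{-1}(x)$ may be a positive-dimensional compact set, and a section of $\mathscr C_{\Gr\fol}$ near it must descend consistently along the whole fibre. I would handle this using the connectedness of the fibres $\rho^{-1}(x)$, which are spaces of sections of the infinitesimal foliation. Together with local constancy, this means a germ near one point of the fibre propagates uniquely along the fibre, so $\rho^{-1}$ of stalks is well behaved and $(\rho_*\mathscr C_{\Gr\fol})_x\cong(\mathscr C_{\Gr\fol})_{\tilde x}$ for any $\tilde x\in\rho^{-1}(x)$. The smoothness of the extension across strata, which rests on the orbifold chart, is the second delicate point.
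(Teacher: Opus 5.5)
Your proposal has two genuine gaps, and they sit exactly where the content of the theorem lies.

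\textbf{The smoothness step.} Near a singular point $x$ there is no submersion from $U$ onto an orbifold chart of the plaque quotient $B$. For the rotation foliation of $\RR^2$, $B$ is the orbifold $\RR/\mathbb Z_2$, and $v\mapsto|v|$ has no smooth lift to the chart $\RR$ at all. Chart submersions exist only upstairs, where $r\circ\rho$ is locally $q_W\circ p_W$. So ``horizontal lift through the chart's submersion'' produces a field on $\rho^{-1}(U)$, not on $U$. There is also no a priori reason why a vector field should push forward smoothly through the non-injective map $\rho$. You cite \cite[Proposition~3.5]{alexlytchak}, but never apply it to anything.

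The missing idea is to dualize, because forms pull back and basic forms descend. The paper pulls $\beta_s=\mathrm h(Y_s,\cdot)$ back by $r\circ\rho$ to a smooth basic one-form on $\rho^{-1}(U)$. It then descends this to a smooth basic one-form $\alpha_s$ on $U$ and sets $X_s=\alpha_s^{\sharp_{\metric}}$. Even then one must still check that $X_s$ is foliated and represents $s$ on the singular strata. The paper does this by extending $\dif\rho(X^{\Gr}_s)=X_s\circ\rho$ by density and comparing flows along the compact fibres of $\rho$.

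\textbf{The realization step, on which your (ii) also rests.} You claim every section of $\mathscr C_\fol$ is a push-down, but a locally constant sheaf is not determined by its restriction to a dense open set. What you actually need is that $\mathscr C_{\Gr\fol}$ is constant on $\rho^{-1}(U)$, i.e.\ that the lift of $s|_{U\cap M_{\reg}}$ extends across the exceptional set. Neither local constancy nor connectedness of $\rho^{-1}(x)$ gives this.

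For rotations of $\RR^2$, $\rho^{-1}(U)$ is a Möbius band around $\rho^{-1}(0)\cong\mathbb{RP}^1$. The regular part is an annulus whose fundamental group maps onto an index-two subgroup, and near each exceptional point it has two local components. In the corresponding reflection chart $\RR\to\RR/\mathbb Z_2$, a regular field $a\,\partial_t$ on $(0,\infty)$ lifts to $a\,\partial_x$ on one side and to $-a\,\partial_x$ on the other. These are restrictions of a single Killing field only if $a=0$, that is, only if the field is a genuine orbifold Killing field. So some property of $s$ at the singular point must be used.

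The paper supplies this by starting from $s$ itself. Molino's construction gives a continuous local isometric flow on $B$. This flow is smooth by \cite[Theorem~3.1]{alexradeschiflows} and lifts $\Gamma$-equivariantly to charts by \cite{schwarzlifting}, producing $Y_s$. For (ii), the paper lifts the smooth fields $X_{s_i}$ by Proposition~\ref{proposition lifting transverse fields} to transverse Killing fields on all of $\rho^{-1}(U)$. Unique continuation of Killing fields on the connected local quotient of a small simple open set $W$ then places the lifted germs in $\mathscr C_{\Gr\fol}$ at exceptional points. Only after this is your density argument for stalkwise bijectivity valid.

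Your downward push-down would avoid Alexandrino--Radeschi and Schwarz, but only by moving the whole difficulty into this unproved extension step.
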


It follows that every complete, infinitesimally polar singular Riemannian foliation on a simply connected manifold is Killing. The smoothness proof applies the descent of basic one-forms to each prescribed Molino section, a mechanism already present in Alexandrino and Lytchak's proof that the leaf closures of an infinitesimally polar foliation form a singular Riemannian foliation \cite[Theorem~3.6 and Remark~3.1]{alexlytchak}. Here we make the argument explicit for the entire Molino sheaf. A conormal description of the resolution then lifts smooth foliated representatives, and uniqueness of transverse Killing fields identifies their germs with the Molino fields upstairs.

\begin{corx}[Corollaries~\ref{corollary killing resolution} and~\ref{corollary killing criterion infinitesimally polar}]
If $\fol$ is an infinitesimally polar singular Killing foliation, then $\Gr \fol$ is a complete, regular Killing foliation, with $\mathfrak{a}_{\fol}\cong\mathfrak{a}_{\Gr\fol}$.
\end{corx}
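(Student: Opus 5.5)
We derive the corollary from Theorem~A, with three points to check: $\Gr\fol$ is complete and regular, its Molino sheaf is globally constant, and the structural algebras agree. Regularity holds by construction: Lytchak's resolution produces a regular Riemannian foliation $\Gr\fol$ on the submanifold $\Gr M$ of the Grassmannian bundle. For completeness, I will use that $\rho\colon\Gr M\to M$ is proper and maps leaves onto leaves, and that the metric on $\Gr M$ is built so that $\rho$ is a Riemannian foliated submersion transversally over each point. A horizontal geodesic in $\Gr M$ projects to a horizontal geodesic in $M$, which exists for all time because $\fol$ is complete. Properness of $\rho$ then keeps the lifted curve inside compact sets on bounded time intervals, so it cannot escape and extends. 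If the Killing hypothesis includes completeness of $\fol$, this step is immediate from the construction in \cite{lytchak}. Otherwise I will include this short argument, or cite the completeness statement for $\Gr\fol$ there.

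For the Killing property, $\fol$ Killing means $\mathscr C_\fol$ is globally constant, with global sections forming $\mathfrak a_\fol$. The inverse image of a constant sheaf under a continuous map is constant with the same stalk. So $\rho^{-1}\mathscr C_\fol$ is the constant sheaf with stalk $\mathfrak a_\fol$, at least on each connected component of $\Gr M$. By Theorem~\ref{theorem molino resolution}, $\mathscr C_{\Gr\fol}\cong\rho^{-1}\mathscr C_\fol$ as sheaves of Lie algebras, so $\mathscr C_{\Gr\fol}$ is globally constant and $\Gr\fol$ is a Killing foliation. Taking global sections gives
\[
\mathfrak a_{\Gr\fol}=\Gamma(\Gr M,\mathscr C_{\Gr\fol})\cong\Gamma(\Gr M,\rho^{-1}\mathscr C_\fol)\cong\mathfrak a_\fol .
\]
The isomorphism of Theorem~A respects brackets, so this is an isomorphism of Lie algebras.

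The main obstacle is connectedness of $\Gr M$. It is needed both so that the global sections of a constant sheaf equal the stalk, and because the definition of a Killing foliation presupposes connected ambient space. I will show this by noting that $\rho$ is a diffeomorphism over the regular stratum $M_{\reg}$, which is open, dense and connected since singular strata have codimension at least two in the infinitesimally polar setting. I will then argue that $\rho^{-1}(M_{\reg})$ is dense in $\Gr M$: each fiber of $\rho$ over a singular point is a limit of sections at nearby regular points. It follows that $\Gr M$ is the closure of a connected set, hence connected. As an alternative, I can check directly that each fiber point is a limit of regular points.
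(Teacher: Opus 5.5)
Your proposal is correct and follows the paper's own argument. Regularity, completeness and connectedness of $\Gr M$ (via the dense connected subset $\rho^{-1}(M_{\reg})\cong M_{\reg}$) come from Lytchak's construction, and Theorem~\ref{theorem molino resolution} then gives $\mathscr C_{\Gr\fol}\cong\rho^{-1}\underline{\mathfrak a}_M\cong\underline{\mathfrak a}_{\Gr M}$, which identifies the structural algebras; since completeness is built into the definition of a singular Killing foliation, citing Lytchak's completeness statement for $\Gr\metric$ is all you need, and your fallback horizontal-geodesic argument should be dropped because it only shows that horizontal geodesics are complete, not that the metric $\Gr\metric$ is complete.
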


The resolution provides a second comparison, involving cohomology and curvature. As already mentioned, pullback by $\rho$ preserves basic differential forms and thus induces an isomorphism $\rho^*\colon \Ho^*(\fol)\to \Ho^*(\Gr \fol)$ in basic cohomology (Proposition \ref{proposition basic forms resolution}). We combine this fact with the preservation of the structural action to obtain that $\rho$ also identifies their equivariant basic cohomologies, as graded $S(\mathfrak{a}^*)$-algebras (Proposition \ref{proposition equivariant cohomology resolution}). Concerning transverse curvature, we show that, on $\rho^{-1}(M_{\reg})$, the transverse curvature and Ricci tensors of $\Gr\fol$ are the pullbacks of those of $\fol$ by $\rho$ (Theorem \ref{theorem transverse curvature resolution}). In particular, uniform bounds on those curvatures persist at the exceptional points (Corollary \ref{corollary transverse curvature bounds resolution}). The applications connect these comparisons to the deformation method for regular Killing foliations presented by A.~Haefliger and E.~Salem in \cite[Theorems~3.4 and~3.7]{haefligersalem}. In \cite{positivecaramellotoben}, F.~Caramello and D.~Töben studied the deformation of the underlying geometry and the invariance of the basic Euler characteristic under these transformations. Their later work also studied the preservation of equivariant basic cohomology, with respect to the transverse infinitesimal action of the structural algebra $\mathfrak{a}\cong\mathscr{C}_\fol(M)$ \cite[Theorem~5.3 and Remark~5.4]{equivariantcaramellotoben}. Caramello and F.~Neubauer also used these deformations to obtain transverse sphere theorems in the context of Killing foliations \cite[Theorems~4.1,~5.2 and~5.3]{spherecaramelloneubauer}. We say that $d:=\dim\mathfrak{a}$ is the \textit{defect} of $\fol$, the maximal absolute difference between the dimensions of a regular leaf and its closure. Since $\mathscr{C}_\fol$ describes $\overline{\fol}$, in the sense that the leaf closures are the $\mathfrak{a}$-orbits that is, $\mathfrak{a}\fol=\overline{\fol}$, we see that the fixed point set $M^{\mathfrak{a}}$ consists precisely of the union of all closed leaves of $\fol$.

Here we obtain several applications for infinitesimally polar
Killing foliations by transferring those regular results through
geometric resolutions. We first collect the consequences for
orbifold models, cohomology, and metric approximation:

\begin{thmx}[Theorems~\ref{theorem closed orbifold models applications}
and~\ref{theorem GH approximation applications}, and
Corollary~\ref{corollary Euler localization applications}]
Let $\fol$ be an infinitesimally polar singular Killing
foliation of codimension $q$ on a compact connected manifold $M$. Then:
\begin{enumerate}[(i)]
\item There is a compact $q$-dimensional Riemannian orbifold
$\mathcal O$, endowed with an effective isometric
$\mathbb{T}^d$-action, such that there is a homeomorphism
\[\frac{M}{\overline{\fol}}\cong\frac{|\mathcal O|}{\mathbb{T}^d},\]
and $\chi(\fol)=\chi(|\mathcal O|)$. Moreover, $\Ho_{\mathfrak{a}}(\fol)\cong\Ho_{\mathbb{T}^d}(\mathcal O)$, as graded $\mathbb{R}$-algebras, and if $\actionarrow{\mathfrak{a}}{\fol}$ is equivariantly formal, then $\actionarrow{\mathbb{T}^d}{\mathcal O}$ is equivariantly formal as well, and $b_j(\fol)=b_j(|\mathcal O|)$, for every $j$.

\item The basic Euler characteristic localizes to the locus of closed leaves:
\[\chi(\fol)=\chi(M^{\mathfrak{a}}/\fol).\]
In particular, $\chi(\fol)\neq0$ implies that $\fol$ has a closed
leaf.

\item There is a sequence of compact $q$-dimensional Riemannian
orbifolds $(\mathcal O_i,\mathrm{h}_i)$ such that
\[(|\mathcal O_i|, d_i)\xrightarrow{\mathrm{GH}}M/\overline{\fol},\]
where $d_i$ is the distance induced by $\mathrm{h}_i$, and $\chi(|\mathcal O_i|)=\chi(\fol)$. Given fixed strict constant bounds on transverse sectional or Ricci curvature that hold on $M_{\reg}$ with a uniform margin,
the sequence can be chosen so that every $(\mathcal O_i,\mathrm{h}_i)$ satisfies those bounds. If $\actionarrow{\mathfrak{a}}{\fol}$ is equivariantly formal, one can moreover require $b_j(|\mathcal O_i|)=b_j(\fol)$ for every $i,j$.
\end{enumerate}
\end{thmx}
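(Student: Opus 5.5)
The strategy is to transfer everything to the regular setting through the geometric resolution $\rho\colon\Gr M\to M$, and then invoke the known deformation results for regular Killing foliations.

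\emph{Reduction to a regular Killing foliation.} By Corollary~A, $\Gr\fol$ is a complete regular Killing foliation on the compact manifold $\Gr M$. Here $\Gr M$ is compact because $\rho$ is proper and $M$ is compact. The codimension of $\Gr\fol$ is $q$, and $\mathfrak a_{\Gr\fol}\cong\mathfrak a_\fol$, so the defect is again $d$. Since $\rho$ induces a bijection of leaf spaces and intertwines the Molino sheaves (Theorem~A(ii)), it also induces a bijection between leaf closures. Being continuous and proper, it therefore gives a homeomorphism
\[
\Gr M/\overline{\Gr\fol}\cong M/\overline\fol .
\]
Proposition~\ref{proposition basic forms resolution} identifies $\Ho^*(\fol)\cong\Ho^*(\Gr\fol)$, so the basic Betti numbers and $\chi$ agree. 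Proposition~\ref{proposition equivariant cohomology resolution} identifies the equivariant basic cohomologies as $S(\mathfrak a^*)$-algebras. Equivariant formality, being the freeness of $\Ho_{\mathfrak a}$ as an $S(\mathfrak a^*)$-module (equivalently, surjectivity onto $\Ho^*$), therefore transfers as well.

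\emph{Item (i).} I will apply the Haefliger--Salem theorem \cite[Theorems~3.4 and~3.7]{haefligersalem} to $\Gr\fol$. It produces a compact $q$-dimensional orbifold $\mathcal O$ with an effective $\mathbb T^d$-action, obtained by closing the image of $\mathfrak a$ in a torus acting on a deformed foliation with closed leaves, such that $\Gr M/\overline{\Gr\fol}\cong|\mathcal O|/\mathbb T^d$. The identities $\chi(\Gr\fol)=\chi(|\mathcal O|)$, $\Ho_{\mathfrak a}(\Gr\fol)\cong \Ho_{\mathbb T^d}(\mathcal O)$, and the transfer of equivariant formality together with the Betti number equality come from \cite{positivecaramellotoben} and \cite[Theorem~5.3 and Remark~5.4]{equivariantcaramellotoben}. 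Composing with the identifications of the previous paragraph gives (i).

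\emph{Item (ii).} For Euler localization I first work upstairs, where the known localization $\chi(\Gr\fol)=\chi(\Gr M^{\mathfrak a}/\Gr\fol)$ holds for regular Killing foliations; equivalently, one can use $\chi(|\mathcal O|)=\chi(|\mathcal O|^{\mathbb T^d})$. I then need to show that $\rho$ maps $\Gr M^{\mathfrak a}$ onto $M^{\mathfrak a}$ and that the induced map
\[
\Gr M^{\mathfrak a}/\Gr\fol\to M^{\mathfrak a}/\fol
\]
is a homeomorphism. Fixed points are exactly the unions of closed leaves on each side, and closed leaves correspond under the leaf-space bijection together with properness. This step is the main subtlety: $\rho$ is not injective over singular points, so I must check that the quotient spaces, not the fixed sets themselves, are homeomorphic. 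I expect this follows because $\rho$ is a continuous proper bijection of compact Hausdorff leaf spaces. The existence of a closed leaf when $\chi(\fol)\neq0$ is then immediate.

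\emph{Item (iii).} I will take the Haefliger--Salem deformations $\Gr\fol_i\to\overline{\Gr\fol}$ through closed foliations, with leaf spaces $\mathcal O_i$ converging to $\Gr M/\overline{\Gr\fol}$ in the Gromov--Hausdorff sense. The Euler and Betti invariants are preserved as in (i). For the curvature claim, Theorem~\ref{theorem transverse curvature resolution} and Corollary~\ref{corollary transverse curvature bounds resolution} show that strict bounds with a uniform margin on $M_{\reg}$ hold on all of $\Gr M$. The regular results of \cite{spherecaramelloneubauer} then keep those bounds along the deformation, since the metric varies continuously in $C^2$ and the margin absorbs the perturbation. The main obstacle is to ensure that the distance on $M/\overline\fol$ matches the one on $\Gr M/\overline{\Gr\fol}$. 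I would argue that $\rho$ is a local isometry transversally on the regular part, and that regular points are dense in the leaf space, so the quotient length metrics agree.
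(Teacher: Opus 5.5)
Your overall route is the paper's. You pass to the compact regular Killing foliation $\Gr\fol$ via Corollary~\ref{corollary killing resolution}, apply the regular deformation theorems of Caramello--T\"oben and Caramello--Neubauer upstairs, and transfer back. The transfer uses $\rho^*$ on (equivariant) basic cohomology and the correspondence of leaf closures and closed leaves. Items (i) and (ii) are fine as written. The Hausdorffness you invoke in (ii) holds because $M^{\mathfrak a}/\fol$ is a closed saturated subspace of the metric space $M/\overline{\fol}$.

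The genuine gap is the metric identification $\Gr M/\overline{\Gr\fol}\cong M/\overline{\fol}$ that (iii) requires, since the Gromov--Hausdorff limit must be $M/\overline{\fol}$ with its own quotient metric. You argue that $\rho$ is a transverse isometry over $M_{\reg}$ and that regular leaf closures are dense, but this does not determine a quotient length metric. A length structure is not fixed by its restriction to an open dense set. Moreover, curves can run inside the exceptional set $\rho^{-1}(M\setminus M_{\reg})$ with positive transverse length.

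Continuity does not close the gap either. At an exceptional point $(x,S)$, passing to the limit from regular points only controls the component of $\dif\rho(v)$ in the limiting section $S$. In general $S$ is a proper subspace of $\nu_x\fol(x)$, so you only get $\lVert v^{h}\rVert\le\lVert(\dif\rho\,v)^{\perp}\rVert$, where $v^h$ is the $\Gr\metric$-horizontal part and $(\cdot)^\perp$ is projection to $\nu_x\fol(x)$. You do not get equality.

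The missing input is Lytchak's result that $\rho$ preserves transverse lengths at every point, including exceptional ones. Equivalently, the resolution identifies local quotients: an orbifold chart $T$ upstairs lies over $B=T/\Gamma$ downstairs. This makes the transverse pseudodistances, built from chains of local-quotient paths joined along leaves, coincide on both sides. That is exactly how Lemma~\ref{lemma quotient comparison applications} obtains the isometry. With this substituted for your density argument, the rest of your (iii) agrees with the paper's proof.
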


The localization formula extends the scope of
\cite[Theorem~D]{caramellosingularequivariant}, while the metric
approximation enhances \cite[Theorem~C]{blupcaramello} in the
infinitesimally polar case. Among the geometric consequences,
we obtain the following.

\begin{thmx}[Theorem~\ref{theorem positive curvature applications} and Corollary~\ref{corollary maximal even defect applications}]
Let $\fol$ be an infinitesimally polar singular Killing foliation of codimension $q\geq2$ on a compact manifold $M$, with defect $d$, and suppose that $\sec_{\fol}\geq\kappa>0$ on $M_{\reg}$.
\begin{enumerate}[(i)]
\item The defect satisfies
\[d\leq\left\lfloor\frac{q+1}{2}\right\rfloor.\]
If equality holds, then $M/\overline{\fol}$ is homeomorphic to the closed ball $\mathbb{D}^{q-d}$.
\item If $q$ is even, then $\fol$ has a closed leaf. If, moreover, $d\geq q/4-1$, then $\chi(\fol)>0$. If $q=2d$, then there are finitely many closed leaves and their number is $\chi(\fol)$.
\item If $\actionarrow{\mathfrak{a}}{\fol}$ is equivariantly formal, then
\[\sum_{j=0}^q b_j(\fol)\leq B,\]
where $B$ is a constant depending only on $q$.
\end{enumerate}
\end{thmx}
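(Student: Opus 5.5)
The plan is to transfer everything to the regular setting via the geometric resolution $\rho\colon\Gr M\to M$, and then invoke the known results for regular Killing foliations with positive transverse curvature. By the Corollary above, $\Gr\fol$ is a complete regular Killing foliation on the compact manifold $\Gr M$, of the same codimension $q$, with $\mathfrak a_{\Gr\fol}\cong\mathfrak a_\fol$; in particular the defect is again $d$. By Corollary~\ref{corollary transverse curvature bounds resolution}, the bound $\sec_\fol\geq\kappa$ on $M_{\reg}$ gives $\sec_{\Gr\fol}\geq\kappa$ everywhere on $\Gr M$. This uses that the transverse curvature upstairs is the pullback of the one downstairs on the dense set $\rho^{-1}(M_{\reg})$, together with continuity. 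Finally, $\rho$ induces a bijection of leaf spaces. Since it intertwines the Molino sheaves (part (ii) of the first theorem), it also induces a homeomorphism $\Gr M/\overline{\Gr\fol}\cong M/\overline\fol$, because $\rho$ is proper and closed. By Propositions~\ref{proposition basic forms resolution} and~\ref{proposition equivariant cohomology resolution}, we also get $\Ho^*(\fol)\cong\Ho^*(\Gr\fol)$ and $\Ho_{\mathfrak a}(\fol)\cong\Ho_{\mathfrak a}(\Gr\fol)$. Hence equivariant formality, the Betti numbers, $\chi$, and the closed leaves (the fixed points of $\mathfrak a$, which correspond under the bijection of leaf spaces) all transfer.

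After this reduction, each item is a statement about a regular Killing foliation with $\sec\geq\kappa>0$. I would then use the Haefliger--Salem deformation in the form of Theorem~\ref{theorem closed orbifold models applications} and Theorem~\ref{theorem GH approximation applications}: a compact $q$-orbifold $\mathcal O$ with an effective isometric $\mathbb T^d$-action, $M/\overline\fol\cong|\mathcal O|/\mathbb T^d$, and orbifold metrics of positive curvature approximating the leaf-closure space. The deformations of Caramello--Neubauer and Caramello--Töben keep $\sec>0$ and the torus action. Then the pieces go as follows.

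(i) The symmetry-rank bound for positively curved orbifolds (Grove--Searle, orbifold version) gives $d\leq\lfloor (q+1)/2\rfloor$. In the equality case, the maximal symmetry rank classification for orbifolds gives that $|\mathcal O|/\mathbb T^d$ is a closed ball $\mathbb D^{q-d}$; this is the transverse version in \cite{spherecaramelloneubauer}.

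(ii) For even $q$, Berger's theorem (torus actions on even-dimensional positively curved orbifolds have fixed points) yields a fixed point, and hence a closed leaf. One can alternatively use $\chi(\fol)=\chi(M^{\mathfrak a}/\fol)$ together with the regular-case result in \cite{positivecaramellotoben}. The positivity $\chi>0$ when $d\geq q/4-1$ comes from the corresponding regular Killing result of Caramello--Töben, applied to $\Gr\fol$ and transferred by $\chi(\fol)=\chi(\Gr\fol)$. When $q=2d$, the fixed components are isolated, so the fixed set is finite and the localization formula counts them.

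(iii) Under equivariant formality, $b_j(\fol)=b_j(|\mathcal O|)$ with $\mathcal O$ positively curved. Gromov's Betti number bound, valid for Alexandrov spaces and hence for $|\mathcal O|$ with curvature $\geq\kappa$ and the diameter normalized by Bonnet--Myers, gives a constant depending only on $q$.

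The main obstacle I expect is keeping the deformed orbifold metrics positively curved while retaining the torus action. This requires that the $\sec\geq\kappa$ bound holds uniformly on all of $\Gr M$, including the exceptional set, where only the pullback-plus-continuity argument is available. The identification of the equality case in (i) with a ball also needs care: it requires tracking that the homeomorphism $M/\overline\fol\cong|\mathcal O|/\mathbb T^d$ comes from the resolution and not merely from a bijection. I would first check that $\rho$ descends to a closed continuous bijection of compact Hausdorff quotients, which makes it a homeomorphism.
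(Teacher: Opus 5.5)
Your reduction to $\Gr\fol$ is the same as the paper's. The defect bound, the closed leaf for even $q$, the count for $q=2d$, and item (iii) also go through. For the closed leaf, Berger's argument at a minimum of $|X|^2$, run in a uniformizing chart, uses no orientation, so it even avoids the oriented cover the paper needs for Synge--Weinstein. For (iii), a Gromov-type Betti bound for Alexandrov spaces replaces \cite[Theorem~6.5]{equivariantcaramellotoben}.

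\textbf{The gap: positivity of $\chi(\fol)$ for $d\geq q/4-1$.} The regular result you want to quote, \cite[Lemma~8.8]{positivecaramellotoben}, is stated and proved for \emph{transversely orientable} foliations. In general $\Gr\fol$ is not transversely orientable. Near an exceptional point $(x,S)$, its transverse model is $S$ modulo the Weyl group of $\fol_x$, which is generated by reflections. In the $\mathbb{S}^2$ example at the end of the paper, the exceptional circles have M\"obius band neighborhoods. For the defect bound, passing to the transverse orientation cover $\widehat{\Gr\fol}$ costs nothing, since the torus dimension is unchanged. Here it does not help. It gives $\chi(\widehat{\Gr\fol})>0$, but $\Ho(\Gr\fol)$ is only the deck-invariant part of $\Ho(\widehat{\Gr\fol})$, and positivity of the Euler characteristic does not descend to that part.

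\textbf{What the paper does instead.} It re-runs the induction without orientability, which enters in two places.
\begin{enumerate}
\item In the low-codimension base cases $q\in\{2,4\}$, transverse Bonnet--Myers gives $b_1=0$. For $q=4$, basic Poincar\'e duality on the oriented cover kills $\Ho^3$ there, and injectivity of $\Ho(\Gr\fol)\to\Ho(\widehat{\Gr\fol})$ kills it downstairs. Hence $\chi=b_0+b_2+b_4\geq1$.
\item For $q\geq6$, a nonzero transverse Killing field on the closed approximation must have a zero. The paper obtains it by Synge--Weinstein on the oriented cover; your Berger argument would also do. Euler localization \cite[Theorem~7.1]{positivecaramellotoben}, together with inductive positivity on the zero-set components, gives $\chi>0$. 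Deformation invariance \cite[Theorem~7.4]{positivecaramellotoben} and $\chi(\fol)=\chi(\Gr\fol)$ then finish.
\end{enumerate}

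\textbf{Secondary issue: the equality case in (i) is only asserted.} Because of the same reflections, $|\mathcal O|$ is in general an Alexandrov space with boundary (mirror points). So you need a classification that covers this case, namely \cite[Theorem~D]{harveysearle}. You then have to compute the torus quotients of its models explicitly:
\begin{enumerate}
\item for $q=2d-1$, $\mathbb{S}^{2d-1}/\mathbb{T}^d\cong\Delta^{d-1}$;
\item for $q=2d$, the suspension of $\Delta^{d-1}$, or a cone on $\Delta^{d-1}$ when the finite group $\Lambda\subset\mathbb{T}^d\times\{\pm1\}$ swaps the suspension points;
\item in the rank-one model, $\mathbb{S}^{2d+1}/\mathbb{T}^{d+1}\cong\Delta^d$.
\end{enumerate}
Each of these is a closed $(q-d)$-ball, but this has to be checked case by case rather than quoted.
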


We also obtain some conclusions under constraints on the transverse Ricci curvature.

\begin{thmx}[Corollary~\ref{corollary positive Ricci applications} and Theorem~\ref{theorem negative curvature applications}]
Let $\fol$ be an infinitesimally polar singular Killing foliation of codimension $q\geq1$ on a compact manifold $M$.
\begin{enumerate}[(i)]
\item If $\ric_{\fol}^{\intercal}\geq\lambda$ on $M_{\reg}$ for some $\lambda>0$, then $\Ho^1(\fol)=0$. If $q\in\{2,4\}$, then also $\chi(\fol)>0$ and $\fol$ has a closed leaf.
\item If $\ric_{\fol}^{\intercal}\leq0$ on $M_{\reg}$ and $\ric_{\fol}^{\intercal}$ is negative definite at some point of $M_{\reg}$, then $\fol$ is closed.
\end{enumerate}
\end{thmx}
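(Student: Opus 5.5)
We reduce both parts to the corresponding statements for the regular Killing foliation $\Gr\fol$ on the compact manifold $\Gr M$. First, by Corollary~\ref{corollary killing resolution}, $\Gr\fol$ is a complete regular Killing foliation with structural algebra $\mathfrak{a}_{\Gr\fol}\cong\mathfrak{a}_\fol$. The map $\rho$ is proper and $M$ is compact, so $\Gr M$ is compact. By Proposition~\ref{proposition basic forms resolution}, $\rho^*\colon \Ho^*(\fol)\to\Ho^*(\Gr\fol)$ is an isomorphism. By Theorem~\ref{theorem transverse curvature resolution} and Corollary~\ref{corollary transverse curvature bounds resolution}, the transverse Ricci tensor of $\Gr\fol$ is $\rho^*\ric^{\intercal}_\fol$ on the dense open set $\rho^{-1}(M_{\reg})$. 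By continuity, the bound $\ric^{\intercal}_{\Gr\fol}\geq\lambda$ (respectively $\leq0$) therefore holds on all of $\Gr M$. Negative definiteness at a regular point $x$ transfers to $\rho^{-1}(x)$, since $\rho$ is a foliated isometry there.

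For (i), we apply the basic Bochner argument to $\Gr\fol$. For a regular Riemannian foliation on a compact manifold with $\ric^{\intercal}>0$, the basic cohomology satisfies $\Ho^1=0$. The cleanest route is via the Haefliger–Salem deformation: a Killing foliation can be deformed, preserving basic cohomology and (by \cite{positivecaramellotoben}) the transverse curvature bound with a margin, to a closed foliation whose leaf space is a compact orbifold $\mathcal O$ with $\ric\geq\lambda'>0$. Myers' theorem for orbifolds then gives a finite orbifold fundamental group, and $\Ho^1(|\mathcal O|;\RR)=0$. Alternatively, we use El Kacimi–Hebda's Bochner formula for basic harmonic forms directly, taking care with the mean curvature term by choosing a bundle-like metric with basic mean curvature. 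Pulling back via $\rho^*$ gives $\Ho^1(\fol)=0$.

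For $q\in\{2,4\}$, the basic Betti numbers satisfy Poincaré duality for the regular Killing foliation $\Gr\fol$; this duality holds because Killing foliations are homologically orientable, or at least on an orientable double cover of the orbifold model, which we should check. Hence $b_{q-1}=b_1=0$. For $q=2$ this gives $\chi=b_0+b_2\geq1$. For $q=4$ it gives $\chi=2+b_2>0$, or in the non-orientable case $\chi\geq1$. Then Corollary~\ref{corollary Euler localization applications} gives $\chi(\fol)=\chi(M^{\mathfrak a}/\fol)\neq0$, so $M^{\mathfrak a}\neq\emptyset$, and $\fol$ has a closed leaf.

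For (ii), $\Gr\fol$ is a regular Killing foliation on a compact manifold with $\ric^{\intercal}\leq0$, negative definite at one point. Each element $X$ of the structural algebra gives a transverse Killing field on $\Gr M$. Applying the Bochner formula to the basic one-form dual to $X$ yields $\int(|\nabla X|^2-\ric(X,X))=0$, possibly after adjusting by the mean curvature form, which is the step requiring most care. So $X$ is parallel and $\ric(X,X)=0$ everywhere. At the negative-definite point this forces $X=0$, and since $X$ is parallel, $X\equiv0$. Thus $\mathfrak{a}_{\Gr\fol}=0$, and hence $\mathfrak{a}_\fol=0$. Therefore $\overline{\fol}=\mathfrak{a}\fol=\fol$, that is, $\fol$ is closed.

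The main obstacle is justifying the Bochner identity for transverse Killing fields of regular Riemannian foliations that are not necessarily taut. We expect to handle it with Domínguez's theorem, which gives a bundle-like metric with basic-harmonic mean curvature, or by averaging over the structural action.
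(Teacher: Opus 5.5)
\textbf{The $q=4$ Euler positivity step is not proved.} Your overall architecture is the paper's. You reduce to the compact regular Killing foliation $\Gr\fol$, then use transverse Bonnet--Myers for $\Ho^1$, duality for $q=4$, and Euler localization for the closed leaf. For (ii) you use a Bochner vanishing of global transverse Killing fields. The paper cites Jung--Richardson there, after normalizing the bundle-like metric so that its mean curvature is basic and coclosed. Your direct computation is the same argument, provided you state that this normalization changes only the leafwise metric, so both Ricci hypotheses survive.

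The gap at $q=4$ is as follows. Basic Poincar\'e duality needs $\Gr\fol$ to be transversely oriented (and taut), and this can fail. The foliation of $\mathbb{RP}^4$ by points is Killing with $\ric^{\intercal}>0$, yet $b_4=0\neq b_0$. Resolutions can also lose transverse orientability over simply connected $M$: the circle action on $\mathbb{S}^2$ resolves to a Klein bottle whose exceptional leaves have M\"obius band neighbourhoods. So ``Killing foliations are homologically orientable'' is false in general. Your fallback ``$\chi\geq1$ in the non-orientable case'' silently assumes $b_3(\Gr\fol)=0$.

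The missing ingredient is the finite-cover injection:
\begin{enumerate}
\item Pass to the transverse orientation double cover $\widehat{\Gr\fol}$. The pulled-back metric still has $\ric^{\intercal}\geq\lambda$, so $\Ho^1(\widehat{\Gr\fol})=0$.
\item Hence the \'Alvarez class vanishes, and $\widehat{\Gr\fol}$ is taut and transversely oriented.
\item Duality then gives $b_3(\widehat{\Gr\fol})=b_1(\widehat{\Gr\fol})=0$.
\item Pullback $\Ho(\Gr\fol)\to\Ho(\widehat{\Gr\fol})$ is injective (average over the deck group), so $b_3(\Gr\fol)=0$.
\item Therefore $\chi(\Gr\fol)=b_0+b_2+b_4\geq1$.
\end{enumerate}
This is exactly how the paper handles it.

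\textbf{Your ``cleanest route'' to $\Ho^1(\Gr\fol)=0$ rests on a false claim.} The Haefliger--Salem deformation preserves the basic Euler characteristic, and the equivariant basic cohomology as $\RR$-algebras. It does not preserve basic Betti numbers in general. That is precisely why Theorem~\ref{theorem closed orbifold models applications} needs equivariant formality to get $b_j(\fol)=b_j(|\mathcal O|)$. So $\Ho^1(|\mathcal O|)=0$ does not give $\Ho^1(\Gr\fol)=0$. Use the direct transverse Bonnet--Myers/Bochner argument instead, which is what the paper cites.

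The orbifold model can still serve the $q=4$ step, but only through $\chi$:
\begin{enumerate}
\item Take a closed approximation whose quotient $\mathcal O$ keeps $\ric>0$; this is possible by the uniform margin on compact $\Gr M$.
\item Use invariance of the Euler characteristic, $\chi(\Gr\fol)=\chi(|\mathcal O|)$.
\item Run the orientation-cover argument on $\mathcal O$ itself: orbifold Myers gives $b_1=0$ on its orientation cover, duality gives $b_3=0$ there, and invariant forms inject.
\end{enumerate}
Either way, the injection step you left unchecked is what makes the argument work.
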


Further sectional curvature applications concern the transverse Grove--Shiohama and Berger--Klingenberg sphere theorems.

\begin{thmx}[Theorems~\ref{theorem diameter sphere applications} and~\ref{theorem quarter pinched applications}]
Let $\fol$ be an infinitesimally polar singular Riemannian foliation of codimension $q\geq2$.
\begin{enumerate}[(i)]
\item If $M$ is compact, $\fol$ is Killing, $\sec_{\fol_{\reg}}\geq\kappa>0$, and $\diam(M/\overline{\fol})>\pi/(2\sqrt\kappa)$, then
\[\frac{M}{\overline{\fol}}\cong\frac{\mathbb{S}^q/\Gamma}{\mathbb{T}^d},\]
for a finite subgroup $\Gamma<\operatorname{O}(q+1)$ and a continuous action $\actionarrow{\mathbb{T}^d}{\mathbb{S}^q/\Gamma}$, where $d=\dim\mathfrak{a}$.
\item If $\fol$ is complete, $q\geq3$, and $0<\alpha\leq\sec_{\fol_{\reg}}\leq\beta<4\alpha$, then there are a Riemannian metric $\mathrm{h}$ on $\mathbb{S}^q$ and a countable subgroup $\Gamma<\operatorname{Iso}(\mathbb{S}^q,\mathrm{h})$ such that there is a homeomorphism and an isometry
\[\frac{M}{\fol}\cong \frac{\mathbb{S}^q}{\Gamma}\quad \text{and} \quad \frac{M}{\overline{\fol}}\cong\frac{(\mathbb{S}^q,\mathrm{h})}{\overline\Gamma},\]
respectively. If moreover $\fol$ is closed, then $\Gamma$ is finite and $M/\fol\cong \mathbb{S}^q/\Gamma$ is an orbifold diffeomorphism.
\end{enumerate}
\end{thmx}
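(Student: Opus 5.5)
The plan is to pass to the geometric resolution $\rho\colon\Gr M\to M$ and transfer the regular sphere theorems of Caramello--Neubauer \cite[Theorems~4.1, 5.2 and 5.3]{spherecaramelloneubauer} back down. Two properties of $\rho$ do the work: it induces a bijection of leaf spaces, and it carries transverse curvature to transverse curvature. The bijection $\Gr M/\Gr\fol\to M/\fol$ is a homeomorphism and is compatible with closures, since $\rho$ is proper and intertwines the Molino sheaves. It therefore descends to a homeomorphism $\Gr M/\overline{\Gr\fol}\cong M/\overline{\fol}$. Because $\rho$ is a foliated diffeomorphism over the regular locus and transverse distances are realized by horizontal geodesics, this homeomorphism is an isometry of the leaf-closure metric spaces. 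The main point to check is that exceptional points do not shorten transverse distances: the exceptional set is a union of leaves of codimension at least one, and a horizontal geodesic upstairs projects to a horizontal geodesic downstairs.

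For (i), I would use Corollary~\ref{corollary killing resolution}: $\Gr\fol$ is a regular Killing foliation on the compact manifold $\Gr M$, with $\mathfrak a_{\Gr\fol}\cong\mathfrak a_\fol$, so its defect is again $d$. Corollary~\ref{corollary transverse curvature bounds resolution} gives $\sec_{\Gr\fol}\geq\kappa$ everywhere on $\Gr M$, by pullback on $\rho^{-1}(M_{\reg})$ and continuity on its dense complement. The diameter hypothesis transfers through the isometry above. The regular diameter sphere theorem \cite[Theorem~5.2]{spherecaramelloneubauer} then yields $\Gr M/\overline{\Gr\fol}\cong(\mathbb S^q/\Gamma)/\mathbb T^d$. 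If that citation is phrased through the Haefliger--Salem deformation, the route is slightly longer: deform $\Gr\fol$ to a closed foliation whose leaf space is an orbifold $\mathcal O$ with $\sec\geq\kappa$ and a $\mathbb T^d$-action, as in Theorem~\ref{theorem closed orbifold models applications}. The deformation can be chosen to keep the diameter above $\pi/(2\sqrt\kappa)$, because the leaf-closure space is $|\mathcal O|/\mathbb T^d$, a quotient with the diameter of $\mathcal O$ at least that of the quotient. Grove--Shiohama for orbifolds then gives $|\mathcal O|\cong\mathbb S^q/\Gamma$. Composing with the homeomorphism of the first paragraph proves (i).

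For (ii), I would apply the transverse Berger--Klingenberg theorem for complete regular Riemannian foliations to $\Gr\fol$. Completeness of $\Gr\fol$ follows from properness of $\rho$ together with the pulled-back metric structure of the resolution. The pinching $\alpha\leq\sec\leq\beta$ holds on all of $\Gr M$ by Corollary~\ref{corollary transverse curvature bounds resolution}, since both bounds are closed conditions. In the regular setting the proof goes through the universal cover of a transversal, or the holonomy pseudogroup on the developing map. Bonnet--Myers makes the basic manifold compact and simply connected of dimension $q$. Classical quarter-pinched rigidity gives a homeomorphism to $\mathbb S^q$ carrying a metric $\mathrm h$. The holonomy pseudogroup lifts to a countable group $\Gamma$ of isometries, giving $\Gr M/\Gr\fol\cong\mathbb S^q/\Gamma$ and $\Gr M/\overline{\Gr\fol}\cong(\mathbb S^q,\mathrm h)/\overline\Gamma$ isometrically. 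Transferring these through the bijection and isometry of the first paragraph gives the two identifications in the statement.

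For the closed case, suppose $\fol$ is closed. Then $\Gr\fol$ is closed with compact leaves, since $\rho$ is proper, so $\Gamma$ is discrete in the compact group $\operatorname{Iso}(\mathbb S^q,\mathrm h)$ and hence finite. The orbifold structure on $M/\fol$ given by infinitesimal polarity \cite{lytchakthorbergsson} is then identified with that of $\Gr M/\Gr\fol=\mathbb S^q/\Gamma$, because $\rho$ identifies local quotients via sections. I expect the main obstacle to be this metric step: showing that the bijection of leaf(-closure) spaces is an isometry rather than merely a homeomorphism, and that the orbifold structures agree in the closed case. Both reduce to the local structure of $\rho$ near exceptional points, where I would use the local orbifold quotient of Lytchak--Thorbergsson.
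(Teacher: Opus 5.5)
Your strategy matches the paper's. You transfer the curvature bounds (Corollary~\ref{corollary transverse curvature bounds resolution}) and the leaf(-closure) space geometry (Lemma~\ref{lemma quotient comparison applications}) through $\rho$, apply the regular theorems of \cite{spherecaramelloneubauer} to $\Gr\fol$, and come back down. The paper gets the isometry in that lemma from Lytchak's preservation of local quotient length structures rather than from horizontal geodesics.

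Two steps need repair. The first is minor, in (i). The regular diameter theorem the paper invokes, \cite[Theorem~4.1]{spherecaramelloneubauer}, has a \emph{strict} curvature hypothesis. Deformations to closed foliations also only retain curvature bounds that hold with a margin. So in your fallback route the closed approximation $\mathcal O$ is not known to satisfy $\sec\geq\kappa$, only $\sec>\kappa-\varepsilon$. You must spend the slack in the strict diameter inequality. The paper does this by rescaling: with $D=\diam(M/\overline{\fol})$ and $1<c<(2\sqrt\kappa D/\pi)^2$, multiplying the resolved metric by $\kappa/c$ gives $\sec\geq c>1$ and diameter $\sqrt{\kappa/c}\,D>\pi/2$. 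In your route you would equivalently need $\diam\mathcal O>\pi/(2\sqrt{\kappa-\varepsilon})$, which holds for $\varepsilon$ small and the deformation close enough.

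The second, more substantive, is the closed case of (ii), where the step ``$\Gr\fol$ is closed with compact leaves, since $\rho$ is proper, so $\Gamma$ is discrete'' does not work. Part (ii) assumes only completeness, so closed leaves may be noncompact (e.g.\ the lines $\{x\}\times\mathbb{R}$ in the round $\mathbb{S}^q\times\mathbb{R}$). Properness of $\rho$ transfers compactness but cannot create it. More to the point, discreteness of $\Gamma$ is exactly what has to be proved. The missing ingredient is countability:
\begin{enumerate}
\item Write $\varrho\colon\widehat{\Gr M}\to\mathbb{S}^q$ for the submersion from the universal cover whose fibers are the lifted leaves. Then $\Gamma$ is the image of the deck group, hence countable.
\item By Lemma~\ref{lemma quotient comparison applications}, $\Gr\fol$ is closed. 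So every $\Gamma$-orbit is closed, because the preimage of a leaf in $\widehat{\Gr M}$ is $\varrho^{-1}(\Gamma y)$ and $\varrho$ is an open surjection.
\item For the compact Lie group $K=\overline\Gamma<\operatorname{Iso}(\mathbb{S}^q,\mathrm{h})$, each orbit $Ky=\overline{\Gamma y}=\Gamma y$ is therefore countable, hence zero-dimensional.
\item So the identity component of $K$ fixes every point and is trivial. Therefore $K$ is finite and $\Gamma=K$.
\end{enumerate}
This is the paper's argument.

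Note also what the paper actually takes from \cite[Theorem~5.3]{spherecaramelloneubauer}: only the existence of $\varrho$. Developability on the universal cover is the real content there, not Bonnet--Myers as your sketch suggests. The paper then constructs $\mathrm{h}$, $\Gamma$, the homeomorphism and the isometry itself. The isometry comes from distances between fibers of a Riemannian submersion with complete total space, followed by an infimum over deck translates. If you use the regular theorem as a black box for all of (ii), you should check that it really delivers these conclusions.
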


Finally, we obtain growth bounds for fundamental groups of leaf spaces and a basic Betti number estimate under negative pinching.

\begin{thmx}[Theorems~\ref{theorem nonnegative Ricci growth applications} and~\ref{theorem negative sectional growth applications}]
Let $\fol$ be an infinitesimally polar singular Riemannian foliation of codimension $q\geq1$ on a compact manifold $M$.
\begin{enumerate}[(i)]
\item If $\ric_{\fol}^{\intercal}\geq0$ on $M_{\reg}$ and $\fol$ is closed, then both $\pi_1^{\mathrm{orb}}(M\Slash\fol)$ and $\pi_1(M/\fol)$ have polynomial growth of degree at most $q$.
\item If $q\geq2$, $\fol$ is Killing, and $-b^2\leq\sec_{\fol}\leq-a^2<0$ on $M_{\reg}$ for constants $0<a\leq b$, then $\fol$ is closed, $\pi_1^{\mathrm{orb}}(M\Slash\fol)$ grows exponentially, and
\[\sum_{j=1}^q b_j(\fol)\leq N b^q\vol(M\Slash\fol),\]
where $N$ is a constant depending only on $q$.
\end{enumerate}
\end{thmx}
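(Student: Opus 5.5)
The plan is to reduce both statements to the regular case via the geometric resolution $\rho\colon \Gr M\to M$, and then apply the known results for regular Riemannian foliations. Concretely, $\Gr\fol$ is a regular Riemannian foliation of codimension $q$ on the compact manifold $\Gr M$, since $\rho$ is proper. Earlier results give us three transfer tools:
\begin{itemize}
\item $\rho$ induces a homeomorphism $\Gr M/\Gr\fol\cong M/\fol$, and one between the leaf-closure spaces, because Molino sheaves correspond by Theorem~\ref{theorem molino resolution};
\item basic cohomology is preserved, so $b_j(\Gr\fol)=b_j(\fol)$ (Proposition~\ref{proposition basic forms resolution});
\item transverse curvature bounds pass to $\Gr M$ (Corollary~\ref{corollary transverse curvature bounds resolution}).
\end{itemize}
Since $\Gr M\to M$ is a foliated diffeomorphism over $M_{\reg}$, the bounds in the hypotheses, which are stated on $M_{\reg}$, become global bounds for $\Gr\fol$.

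For (i), assume $\fol$ is closed. Then $\Gr\fol$ is also closed: its leaves are proper preimages of closed leaves, and the leaf spaces are homeomorphic. Thus $\Gr M\Slash\Gr\fol$ is a compact Riemannian orbifold of dimension $q$, and its Ricci curvature equals $\ric^{\intercal}_{\Gr\fol}\geq0$. We then apply the Milnor--Gromov type growth theorem for compact orbifolds with $\ric\geq0$, using a Bishop--Gromov volume comparison on the orbifold universal cover. This gives that $\pi_1^{\mathrm{orb}}$ has polynomial growth of degree at most $q$. The group $\pi_1(|\mathcal O|)$ is a quotient of $\pi_1^{\mathrm{orb}}(\mathcal O)$, and growth degree does not increase under quotients, so the same bound holds for it.

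It remains to identify the orbifolds $M\Slash\fol$ and $\Gr M\Slash\Gr\fol$. Near regular points $\rho$ is a foliated diffeomorphism. Near singular points, infinitesimal polarity gives local orbifold charts for $M\Slash\fol$ by the Lytchak--Thorbergsson result, and these are modeled on a section. The section is lifted isometrically, and hence the orbifold structure, by $\rho$. This identification is the step I expect to be the most delicate.

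For (ii), the strategy is first to prove closedness and then to invoke the negatively pinched results.
\begin{enumerate}
\item By Corollary~\ref{corollary killing resolution}, $\Gr\fol$ is a regular Killing foliation with $\sec_{\Gr\fol}\leq -a^2<0$, so Ricci is negative definite everywhere.
\item Theorem~\ref{theorem negative curvature applications} (or the regular version, via the Haefliger--Salem deformation) gives that $\fol$ is closed. The underlying reason is that a nontrivial structural algebra yields transverse Killing fields, and Bochner's argument forbids these under negative Ricci curvature.
\item The quotient $\Gr M\Slash\Gr\fol$ is then a compact orbifold with $-b^2\leq\sec\leq -a^2$.
\item Exponential growth of $\pi_1^{\mathrm{orb}}$ follows from Milnor's argument for negative curvature on compact good orbifolds. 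Such orbifolds are good because the universal cover is Cartan--Hadamard.
\item The Betti bound is Gromov's Betti number theorem in the negatively pinched orbifold setting. We apply it to $|\mathcal O|$, whose real cohomology equals the orbifold cohomology, which is $\Ho^*(\Gr\fol)\cong\Ho^*(\fol)$.
\end{enumerate}
Gromov's bound in the pinched form $\sum b_j\leq N b^q\vol$ extends to orbifolds via the finite quotient $\Gamma\backslash\mathbb{H}$-type structure: $\mathcal O=\widetilde{\mathcal O}/\pi_1^{\mathrm{orb}}$, with the thick–thin covering argument adapted to orbifold points.

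The main obstacle is twofold. The first part is the orbifold identification in (i). The second is the justification that Gromov's Betti estimate and Milnor's growth theorems hold for orbifolds. For the latter I would pass to a finite manifold cover when one is available (by Selberg-type arguments), or else cite the orbifold versions directly.
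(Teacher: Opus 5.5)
Your proposal is correct and follows essentially the paper's route. You transfer the curvature bounds, closedness and basic cohomology to $\Gr\fol$; for (i) you use Borzellino's orbifold growth theorem together with the epimorphism $\pi_1^{\mathrm{orb}}\to\pi_1$; and for closedness in (ii) you use Theorem~\ref{theorem negative curvature applications}.

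The orbifold identification you flag as delicate is exactly the last assertion of Lemma~\ref{lemma quotient comparison applications}. For the growth and Betti estimates in (ii), the paper applies the regular-foliation results \cite[Theorem~4.6 and Corollary~4.7]{equivariantcaramellotoben} to the resolution instead of re-deriving orbifold versions of Milnor's and Gromov's theorems. Your Selberg-type finite-cover fallback, which need not be available, is therefore unnecessary.
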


The paper is organized as follows. Section~\ref{section background} fixes the definitions and recalls the geometric resolution. Section~\ref{section lifting} construct transverse lifting. Section~\ref{section molino resolution} establishes the strong Molino conjecture for the infinitesimally polar class and identifies the Molino sheaves. Section~\ref{section cohomology curvature} treats basic cohomology and transverse curvature, and Section~\ref{section applications} collects the applications.

\section{Background}
\label{section background}

Throughout the paper, all manifolds are smooth and without boundary, and all vector fields and maps are smooth unless otherwise stated. We assume that $M$ is connected. When completeness is required for $(M,\fol,\metric)$, it refers to the chosen adapted Riemannian metric $\metric$ on $M$. If $M$ is compact, this completeness is automatic. For an open subset $U\subset M$, the leaves of $\fol|_U$ are the connected components of the intersections of leaves of $\fol$ with $U$.

\begin{definition}[Singular Riemannian foliation]
Let $(M,\metric)$ be a Riemannian manifold and let $\fol$ be a partition of $M$ into connected immersed submanifolds, called \emph{leaves}. We say that $\fol$ is a \emph{singular Riemannian foliation} if
\begin{enumerate}
\item for every $x\in M$ and $v\in T_x\fol(x)$, there exists a smooth vector field $V$, tangent to all leaves, such that $V(x)=v$;
\item every geodesic perpendicular to a leaf remains perpendicular to all leaves it meets.
\end{enumerate}
Here $\fol(x)$ denotes the leaf containing $x$. The second condition is also expressed by saying that $\metric$ is \emph{adapted} to $\fol$.
\end{definition}

We denote the module of leaf-tangent vector fields by $\vct(\fol)$ and put $T_x\fol:=T_x\fol(x)$. The \emph{dimension} and \emph{codimension} of $\fol$ are
\[\dim(\fol)=\max_{x\in M}\dim \fol(x) \quad \text{and} \quad \codim(\fol)=\dim M-\dim(\fol).\]
respectively. The connected components of the loci of leaves of a fixed dimension form the \emph{strata} $\Sigma_\alpha$ of $\fol$. Each $\Sigma_\alpha$ is an embedded submanifold, and $\fol_\alpha:=\fol|_{\Sigma_\alpha}$ is a regular Riemannian foliation with the induced metric. The regular locus
\[M_{\reg}:=\{x\in M\mid\dim \fol(x)=\dim(\fol)\}\]
is open and dense. Since the singular strata have codimension at least two, $M_{\reg}$ is also connected; see \cite[Section~6.2]{molino}. Here regularity concerns only leaf dimension, so exceptional leaves of maximal dimension are included in $M_{\reg}$.

A smooth map between foliated manifolds is \emph{foliated} if it sends each leaf into a leaf. A vector field $X$ is \emph{foliated} when its local flow consists of foliated maps. Equivalently, $[X,Y]\in\vct(\fol)$ for every $Y\in\vct(\fol)$. The foliated fields form a Lie algebra $\mathfrak L(\fol)$, and $\vct(\fol)$ is an ideal in it. The elements of
\[
\trs(\fol):=\mathfrak L(\fol)/\vct(\fol)
\]
are the \emph{transverse vector fields}. We denote the class of $X\in\mathfrak L(\fol)$ by $\overline X$. We use the same notation for the restricted foliation on any open set.

Let us recall the transverse metric in the regular case (i.e., when $M=M_{\reg}$). The tangent spaces of the leaves then form a smooth vector subbundle $T\fol\subset TM$. For $v\in T_xM$, denote by $v^\perp$ its orthogonal projection onto $(T_x\fol)^{\perp_\metric}$. The \emph{transverse metric}
\[\metric^\intercal_x(v,w):=\metric_x(v^\perp,w^\perp)\]
is smooth, has kernel $T\fol$, and is invariant under the flows of leaf-tangent fields. Consequently, the expression
\[\mathcal L_{\overline X}\metric^\intercal:=\mathcal L_X\metric^\intercal\]
is independent of the foliated representative $X$.

\begin{definition}[Transverse Killing field]
For a regular Riemannian foliation, a transverse field $\overline X$ is \emph{Killing} if $\mathcal L_{\overline X}\metric^\intercal=0$. A \emph{transverse Killing field} for a singular Riemannian foliation is a transverse field whose restriction to each stratum $\Sigma_\alpha$ is transverse Killing for $\fol_\alpha$.
\end{definition}

Notice that a foliated local flow preserves leaf dimension and therefore preserves each stratum locally, so these restrictions are well defined.

Let $\fol$ be a regular foliation of a manifold $M$. A \emph{simple open set} is a connected open subset $W\subset M$ admitting a surjective submersion $p\colon W\to S$ onto a connected smooth manifold, whose fibers are connected and are exactly the leaves of $\fol|_W$. We call $p$ a \emph{local defining submersion} and $S$ its \emph{local quotient}. The leaves of $\fol|_W$ are also called the \emph{plaques} in $W$. Every point of a regular foliation has arbitrarily small simple open neighborhoods \cite[Chapter~1]{moerdijkmrcun}. For a Riemannian foliation, the local quotient has a unique metric $\mathrm{h}$ satisfying $\metric^\intercal|_W=p^*\mathrm{h}$, so $p$ is a Riemannian submersion. A foliated field projects through $p$, by connectedness of its fibers, and its transverse class is Killing exactly when the projected field is Killing for $\mathrm{h}$ \cite[Proposition~2.1]{alexdesing}.

The normal bundle of a regular foliation $(N,\fol)$ is $\nu\fol=TM/T\fol$, and it is \emph{transversely orientable} if this bundle is orientable. The \emph{transverse orientation cover} has over $x\in M$ the two orientations of $\nu_x\fol$. The lifted foliation has its tautological transverse orientation. If the normal bundle is already orientable, either component of this cover maps diffeomorphically onto $M$. If necessary, taking also the ambient orientation cover gives a connected finite normal cover on which both orientations are available. More generally, if $p\colon \widehat M\to M$ is a connected finite normal covering, with deck group $\mathrm{Deck}(p)$, consider the pullback foliation $\widehat\fol=p^*\fol$ endowed with the pullback metric. Then $\widehat\fol$ is complete when $\fol$ is complete, and a leaf of $\widehat\fol$ is closed if and only if its image is closed. Moreover, pullback identifies $\Omega(\fol)$ with $\Omega(\widehat\fol)^{\mathrm{Deck}(p)}$ and induces an injection $p^*\colon\Ho(\fol)\longrightarrow\Ho(\widehat\fol)$. If $\fol$ is Killing, so is $\widehat\fol$, with the same structural algebra $\mathfrak a$, and the lifted structural action commutes with $\mathrm{Deck}(p)$.

Back to the general singular setting. A differential form $\omega\in\Omega^k(M)$ is \emph{basic} if $\iota_Y\omega=0$ and $\mathcal L_Y\omega=0$ for all $Y\in\vct(\fol)$. The basic forms form a differential graded algebra $\Omega(\fol)$, since the exterior derivative preserves these conditions. Its cohomology is the \emph{basic cohomology} $\Ho(\fol)$. When these groups are finite dimensional (for instance, when $M$ is compact, see \cite[Theorem 4.4]{caramellosingularequivariant}), their alternating dimension sum is the \emph{basic Euler characteristic} $\chi(\fol)$. In this case we also write $b_j(\fol)=\dim\Ho^j(\fol)$ for the \textit{basic Betti numbers} of $\fol$.

For $x\in M$, the normal space to the leaf is
\[
\nu_x\fol(x):=(T_x\fol(x))^{\perp_\metric}\subset T_xM.
\]
Choose a small plaque through $x$ and a distinguished tubular neighborhood of it. Its slice at $x$ is the image under $\exp_x^\perp$ of a small ball in $\nu_x\fol(x)$. Intersecting the local leaves with this slice, taking connected components, and pulling back by $\exp_x^\perp$ gives a foliation of that ball. Molino's homothetic lemma allows this foliation to be extended by positive dilations to the whole of $\nu_x\fol(x)$ \cite[Lemma~6.2]{molino}. The resulting singular Riemannian foliation, for the Euclidean metric $\metric_x$, is the \emph{infinitesimal foliation} $\fol_x$.

\begin{definition}[Infinitesimal polarity]
The foliation $\fol$ is \emph{infinitesimally polar} if every $\fol_x$ is polar, meaning that it admits a subspace $S<\nu_x\fol(x)$ meeting every leaf of $\fol_x$ orthogonally. Such a subspace is an \emph{infinitesimal section} at $x$.
\end{definition}

Put $q=\codim(\fol)$. Notice that every infinitesimal section has dimension $q$. When $x\in M_{\reg}$, the foliation $\fol_x$ is the foliation by points, and its only section is the whole normal space $\nu_x\fol(x)$. At a singular point, the normal space can have dimension larger than $q$, and it may contain several infinitesimal sections. By \cite[Theorem~1.4]{lytchakthorbergsson}, infinitesimal polarity at $x$ is equivalent to local closedness and the existence of a sufficiently small local Riemannian orbifold quotient near $x$.

For a complete singular Riemannian foliation, let $\overline{\fol}$ denote the partition into leaf closures. The closure theorem of Alexandrino--Radeschi \cite{alexradeschiclosure} makes $\overline{\fol}$ a singular Riemannian foliation. We say that $\fol$ is \emph{closed} when $\overline{\fol}=\fol$. In general, completeness equips $M/\overline{\fol}$ with the quotient length metric (compare the metric-quotient discussion preceding \cite[Theorem~1.4]{lytchakthorbergsson}). We define the \textit{transverse diameter} of $\fol$ by
\[\diam^{\intercal}(\fol)=\diam(M/\overline{\fol}).\]
For a complete regular Riemannian foliation this agrees with the transverse diameter defined using lengths in local quotients \cite[Proposition~3.1]{spherecaramelloneubauer}.

If $\fol$ is infinitesimally polar and closed, its leaf space has the canonical Riemannian
orbifold structure given by the transverse quotient charts and
holonomy. We denote this leaf orbifold by $M\Slash\fol$, whose underlying
space is $M/\fol$. Euler characteristics of orbifolds refer to their underlying spaces, or equivalently to their real de Rham cohomology.

Let us now briefly recall Molino's structural theory. Let $\mathscr{C}$ be a sheaf of Lie algebras on $M$. The \emph{stalk} $\mathscr C_x$ of a sheaf consists of germs of local sections at $x$, with two sections defining the same germ when they agree near $x$. For a finite-dimensional Lie algebra $\mathfrak h$, the constant sheaf $\underline{\mathfrak h}_M$ consists of locally constant maps to $\mathfrak h$. A sheaf is \emph{locally constant} if it admits such an identification near every point, and \emph{globally constant} if one identification exists on all of $M$.

Molino's second structural theorem associates to a complete regular Riemannian foliation $\fol$ a canonical locally constant sheaf $\mathscr C_{\fol}$ of Lie algebras of germs of local transverse Killing fields \cite[Theorem~5.2]{molino}. Its local sections describe the leaf closures, in the following sense: if $\overline{X_1},\ldots,\overline{X_r}$ form a local basis of the sheaf on a sufficiently small neighborhood $U$, then
\[T_y\overline{\fol(y)}=T_y\fol(y)+\spannn\{X_1(y),\ldots,X_r(y)\}\]
for each $y\in U$. The typical stalk is isomorphic to the opposite of the structural Lie algebra $\mathfrak{g}_\fol:=\mathfrak{l}(\fol|_L)$ of $\fol$.

In the singular case, the Molino sheaf of $\fol|_{M_{\reg}}$ extends continuously to a locally constant sheaf $\mathscr C_{\fol}$ on $M$ \cite[Lemma~6.5]{molino}. Its sections restrict to transverse Killing fields on each stratum. We say that $\mathscr C_{\fol}$ is \emph{smooth} if every such section admits, locally, a smooth foliated representative on $M$, including at singular points. This condition concerns the realization of the sheaf by vector fields; local constancy alone is a property of its abstract local systems of Lie algebras. The structural Lie algebra of $\fol$ is that of $\fol|_{M_{\reg}}$.

The assertion that this smoothness holds for every complete singular Riemannian foliation is the \emph{strong Molino conjecture} \cite[p.~215]{molino}; see \cite[Section~9.4]{surveyalexcaramello} and \cite[Section~3.1]{caramellosingularequivariant} for this terminology. Here we establish it for the infinitesimally polar class in Theorem~\ref{theorem strong molino infinitesimally polar}.

\begin{definition}[Killing foliation]
A complete regular Riemannian foliation is a \emph{Killing foliation} if its Molino sheaf is globally constant. A complete singular Riemannian foliation is a \emph{singular Killing foliation} if its Molino sheaf is smooth and globally constant (as in \cite{blupcaramello}).
\end{definition}

Smoothness remains part of the definition of a singular Killing foliation. For complete infinitesimally polar foliations, however, it is automatic by Theorem~\ref{theorem strong molino infinitesimally polar}. In this class, global constancy of the Molino sheaf is therefore equivalent to the Killing condition. When the foliation is Killing, its structural Lie algebra is Abelian and is denoted by $\mathfrak{a}$. The corresponding global Molino fields define its natural transverse $\mathfrak{a}$-action $\actionarrow{\mathfrak{a}}{\fol}$.

For a regular Riemannian foliation $\fol$, let $\operatorname{iso}(\fol)$ be its Lie algebra of global transverse Killing fields and define its \emph{transverse symmetry rank} by
\[
\symrank(\fol)
=\max\{\dim\mathfrak t \mid \mathfrak t\subset\operatorname{iso}(\fol)
\text{ is an Abelian Lie subalgebra}\}.
\]
For a closed foliation on a compact manifold, this is the symmetry rank of its compact leaf orbifold.

\section{Lifting fields to the geometric resolution}
\label{section lifting}

We start by recalling Lytchak's geometric resolution. For a vector bundle $E\to M$, its \emph{Grassmannian bundle of $q$-planes} is
\[\Gr_q(E):=\{(x,P)\mid x\in M,\ P\subset E_x\text{ is a linear subspace with }\dim P=q\}.\]
Its projection sends $(x,P)$ to $x$, and its smooth structure is obtained from vector bundle trivializations and the usual smooth Grassmannians of the fibers.

Suppose now that $\fol$ is infinitesimally polar. Lytchak constructs the resolution as the subset
\[\Gr M:=\{(x,S)\in\Gr_q(TM)\mid S\text{ is an infinitesimal section at }x\},\]
with projection $\rho(x,S)=x$. We will use the following properties of this construction \cite[Theorem~1.1 and Sections~4.2--4.6]{lytchak}:
\begin{itemize}
\item $\Gr M$ is a closed smooth embedded submanifold of $\Gr_q(TM)$, with $\dim\Gr M=\dim M$, and $\rho$ is proper and surjective.
\item $\Gr M$ is connected, since it contains the connected dense subset $M_{\reg}^{\Gr}\cong M_{\reg}$.
\item The sets $\rho^{-1}(L)$, for $L\in\fol$, are the leaves of a regular foliation $\Gr\fol$ of codimension $q$.
\item The open subset $M_{\reg}^{\Gr}:=\rho^{-1}(M_{\reg})$ is dense in $\Gr M$, and the restriction
\[
\rho_{\reg}:=\rho|_{M_{\reg}^{\Gr}}\colon M_{\reg}^{\Gr}\longrightarrow M_{\reg}
\]
is a foliated diffeomorphism.
\item There is a Riemannian metric $\Gr\metric$ adapted to $\Gr\fol$ for which $\rho$ preserves transverse lengths. In particular, writing $\metric_{\reg}^\intercal$ for the transverse metric of $\fol|_{M_{\reg}}$, one has
\begin{equation}\label{equation transverse metrics}
(\Gr\metric)^\intercal|_{M_{\reg}^{\Gr}}
=\rho_{\reg}^*\metric_{\reg}^\intercal.
\end{equation}
\item If $\metric$ is complete (compact), then $\Gr\metric$ is complete (compact).
\end{itemize}

\begin{lemma}\label{lemma quotient comparison applications}
Let $(M,\fol,\metric)$ be complete and infinitesimally polar. The resolution $\rho\colon\Gr M\to M$ induces a homeomorphism $\Gr M/\Gr\fol\cong M/\fol$ and an isometry
\[\Gr M/\overline{\Gr\fol}\cong M/\overline{\fol}.\]
Moreover, for every leaf $L$ of $\fol$,
\[\overline{\rho^{-1}(L)}=\rho^{-1}(\overline L).\]
In particular, closed leaves correspond, closedness of the entire
foliation is preserved, and $\diam^{\intercal}(\Gr\fol)=\diam^{\intercal}(\fol)$. If $\fol$ is closed, the induced identification $\Gr M\Slash\Gr\fol\cong M\Slash\fol$ is an isometry of Riemannian orbifolds.
\end{lemma}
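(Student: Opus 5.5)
The plan is to obtain everything from the listed properties of Lytchak's construction: $\rho$ is proper, continuous and surjective, the leaves of $\Gr\fol$ are exactly the sets $\rho^{-1}(L)$, $\rho$ preserves transverse lengths, and $\rho_{\reg}$ is a foliated diffeomorphism.

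\emph{Leaf spaces.} Since the leaves of $\Gr\fol$ are the preimages $\rho^{-1}(L)$, the map $\rho$ descends to a continuous bijection $\bar\rho\colon \Gr M/\Gr\fol\to M/\fol$. A proper continuous map into a locally compact Hausdorff space is closed. Moreover, $\rho$ is saturated: if $A\subset\Gr M$ is $\Gr\fol$-saturated, then $A=\rho^{-1}(\rho(A))$. Together these show that $\bar\rho$ sends open sets to open sets. Indeed, if $U$ is open and saturated, its complement is closed and saturated, so $\rho(\Gr M\setminus U)=M\setminus\rho(U)$ is closed. Hence $\bar\rho$ is a homeomorphism.

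\emph{Closures.} Because $\rho$ is continuous, $\rho^{-1}(\overline L)$ is a closed set containing $\rho^{-1}(L)$, which gives $\overline{\rho^{-1}(L)}\subset\rho^{-1}(\overline L)$. For the reverse inclusion, $\rho$ is closed, so $\rho(\overline{\rho^{-1}(L)})$ is a closed set containing $\rho(\rho^{-1}(L))=L$ (using surjectivity). It therefore contains $\overline L$. Now $\overline{\rho^{-1}(L)}$ is a union of leaves of $\Gr\fol$, since leaf closures of a Riemannian foliation are saturated. Each such leaf is the full fiber $\rho^{-1}(L')$, so this set equals $\rho^{-1}$ of its own image, and that image contains $\overline L$. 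This proves the equality. It follows that closed leaves correspond, that $\Gr\fol$ is closed if and only if $\fol$ is closed, and that $\rho$ induces a bijection, in fact a homeomorphism by the same argument as above, $\Gr M/\overline{\Gr\fol}\to M/\overline{\fol}$.

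\emph{Isometry.} This is the main step. The distance between two leaf closures is the infimum of lengths of horizontal curves joining them. Equivalently, by completeness it is realized by the distance in $M$ between the closed sets, which is the quotient length metric. I would use the fact that $\rho$ preserves transverse lengths (Lytchak): for any curve $c$ in $\Gr M$, the transverse length of $c$ equals that of $\rho\circ c$. Since the distance between leaf closures equals the infimum of transverse lengths of curves joining them, this gives $d(\rho(\overline{L_1}),\rho(\overline{L_2}))\le d(\overline{L_1},\overline{L_2})$.

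For the reverse inequality I would lift curves. A minimizing horizontal geodesic in $M$ between closures can be perturbed to a curve of almost the same length lying in $M_{\reg}$ except at its endpoints. The singular strata have codimension at least two and the quotient is a length space, though one must take care here. Alternatively, use that $M_{\reg}/\overline\fol$ is dense and convex-enough in the orbifold sense from the local orbifold quotients of \cite[Theorem~1.4]{lytchakthorbergsson}. Such a curve lifts through $\rho_{\reg}^{-1}$ with the same transverse length by \eqref{equation transverse metrics}. This yields the reverse inequality on a dense set of pairs, and continuity of both metrics extends it everywhere. The equality of diameters follows immediately.

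Finally, if $\fol$ is closed, the orbifold charts of $M\Slash\fol$ are the local quotients near $x$. Lytchak's construction shows that near $\rho^{-1}(x)$ the local quotient of $\Gr\fol$ is the same Riemannian orbifold chart: the section $S$ maps to the slice modulo the Weyl-type group. So $\bar\rho$ is a smooth orbifold map which is a local isometry on the regular part (via \eqref{equation transverse metrics}), hence an isometry of Riemannian orbifolds, since orbifold metrics are determined on dense regular parts.
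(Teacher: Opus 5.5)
Your proposal is correct. For the leaf spaces and the closure identity it is essentially the paper's argument. For the isometry it takes a genuinely different route.

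\textbf{Closure identity.} You use that the proper map $\rho$ is closed and that closures of $\Gr\fol$-leaves are saturated. The paper instead uses openness of the two leaf projections and the identity $\overline{p^{-1}(z)}=p^{-1}(\overline{\{z\}})$ for open quotient maps. Both arguments work.

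\textbf{Isometry, compared.} The paper obtains the isometry in one stroke. It cites Lytchak for the fact that $\rho$ identifies the local quotient length structures. Then the transverse pseudodistance, built from chains of local-quotient paths joined along leaves, is literally the same upstairs and downstairs, and both leaf-closure spaces are its metric quotients. The same local isometries then give the orbifold statement directly.

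You split the isometry into two inequalities:
\begin{itemize}
\item $d_{\mathrm{down}}\le d_{\mathrm{up}}$, from preservation of transverse lengths of curves in $\Gr M$;
\item $d_{\mathrm{up}}\le d_{\mathrm{down}}$, by lifting regular curves through $\rho_{\reg}^{-1}$ using only \eqref{equation transverse metrics}, and then extending by density and continuity.
\end{itemize}
Your route needs less from Lytchak: only forward length preservation and the regular-part identity. The price is a perturbation step and a density argument. Your orbifold paragraph relies on the same local-chart identification the paper uses, so you could equally have used it for the metric part.

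\textbf{Fixing the flagged step.} The step you mark with ``one must take care'' should be made precise as follows, and the ``convex-enough in the orbifold sense'' alternative should be dropped.
\begin{itemize}
\item The relevant codimension count is in $M$, not in the quotient. In a local orbifold quotient the singular leaves appear as mirror walls of codimension one, so a quotient-side codimension argument does not apply.
\item $M\setminus M_{\reg}$ is a locally finite union of submanifolds of codimension at least two. Leaf closures of regular leaves lie in $M_{\reg}$ by \cite[Lemma~6.4]{molino}.
\item Hence, for such closures, a minimizing horizontal geodesic can be $C^1$-perturbed, with endpoints fixed, to a curve in $M_{\reg}$ of Riemannian length at most $d+\epsilon$.
\item On $M_{\reg}$, transverse length is at most Riemannian length. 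By \eqref{equation transverse metrics}, the lift through $\rho_{\reg}^{-1}$ has the same transverse length, which bounds the upstairs distance.
\end{itemize}

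\textbf{A fact you use implicitly.} You also need that, downstairs, the distance in $M/\overline{\fol}$ is at most the transverse length of any curve joining the two closures. This holds because the local plaque quotients project $1$-Lipschitz onto $M/\overline{\fol}$. It is routine, but it should be stated. With these points supplied, your argument is complete.
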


\begin{proof}
Properness and surjectivity make $\rho$ a quotient map, and its inverse images of leaves are precisely the resolved leaves. The induced leaf-space bijection is therefore a homeomorphism. Leaf projections are open, since saturations of open sets are unions of images under local leaf-tangent flows. For an open quotient map $p$, one has $\overline{p^{-1}(z)}=p^{-1}(\overline{\{z\}})$: every neighborhood of a point on the right has open image meeting $\{z\}$. Applying this to both leaf projections proves the closure identity and the correspondence of closed leaves.

The resolution preserves the local quotient length structures \cite[Theorem~1.1 and Section~2.6]{lytchak}. The transverse pseudodistance, obtained from chains of local quotient paths joined along leaves, is consequently preserved. For complete Riemannian foliations, its metric quotient is the leaf-closure space, proving the isometry and diameter equality. In the closed case, the same local transverse isometries identify the leaf orbifolds.
\end{proof}

We now give a description of $\Gr M$ inside the cotangent Grassmannian. Recall that, for a subspace $V$ of a finite-dimensional vector space $E$, its \emph{annihilator} is
\[\Ann(V)=\{\alpha\in E^* \mid \alpha|_V=0\}.\]
Thus $\Ann(T_x\fol)$ is the conormal space at a regular point and has dimension $q$. The metric isomorphism $\metric^\flat(v)=\metric(v,\cdot)$, with inverse $\metric^\sharp$, satisfies $\metric_x^\flat(V^{\perp_\metric})=\Ann(V)$ and induces the Grassmannian bundle diffeomorphism $B_{\metric}\colon\Gr_q(TM)\longrightarrow\Gr_q(T^*M)$, given by
\[\qquad B_{\metric}(x,S)=(x,\metric_x^\flat(S)).\]
Since $\Gr M$ is closed and $M_{\reg}^{\Gr}$ is dense in it, this diffeomorphism identifies the resolution with
\begin{equation}\label{equation conormal model}
(\Gr M)^*
=\overline{\{(x,\Ann(T_x\fol)):x\in M_{\reg}\}}
\subset\Gr_q(T^*M)
\end{equation}
(the closure is taken in the total Grassmannian bundle). At a singular point, $\Ann(T_x\fol)$ has dimension greater than $q$; the exceptional fiber consists of the limiting $q$-planes in \eqref{equation conormal model}, equivalently the metric images of the infinitesimal sections.

We now lift foliated diffeomorphisms and their infinitesimal generators.

\begin{prop}\label{proposition lifting diffeomorphisms}
Let $\phi\colon U\to V$ be a smooth foliated diffeomorphism between open subsets of $M$. Then $\phi$ has a canonical smooth foliated lift
\[\phi^{\Gr}\colon\rho^{-1}(U)\longrightarrow\rho^{-1}(V),\]
that is, $\rho\circ\phi^{\Gr}=\phi\circ\rho$. The lifts respect composition and depend smoothly on any smooth parameters in $\phi$.
\end{prop}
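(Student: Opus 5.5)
The natural tool is the conormal model \eqref{equation conormal model}, since the cotangent action of a diffeomorphism needs no metric. A foliated diffeomorphism $\phi\colon U\to V$ acts on $\Gr_q(T^*M)$ over $U$ by
\[\phi^\sharp(x,\Pi):=\bigl(\phi(x),(\dif\phi_x^{-1})^*\Pi\bigr),\qquad \Pi\subset T_x^*M,\]
which is a smooth diffeomorphism $\Gr_q(T^*U)\to\Gr_q(T^*V)$ covering $\phi$. It is functorial, $(\psi\circ\phi)^\sharp=\psi^\sharp\circ\phi^\sharp$, and depends smoothly on parameters, because it is assembled from $\dif\phi$ by linear algebra. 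I would then set
\[\phi^{\Gr}:=B_{\metric}^{-1}\circ\phi^\sharp\circ B_{\metric}\]
and check that it maps $\rho^{-1}(U)$ into $\rho^{-1}(V)$. The metric $\metric$ enters only through the fixed identification $B_{\metric}$, so the construction is canonical.

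The key step is that $\phi^\sharp$ preserves the conormal model. First, $\phi$ maps leaves of $\fol|_U$ into leaves, and being a diffeomorphism whose inverse is also foliated, it preserves leaf dimension. Hence it preserves $M_{\reg}$, and $\dif\phi_x(T_x\fol)=T_{\phi(x)}\fol$. (If only $\phi$ is assumed foliated, I would note that an injective immersion sending leaves into leaves locally maps plaques onto open pieces of plaques, by invariance of domain, once dimensions match. Dimensions match by a semicontinuity and density argument on the regular part.) Consequently, for $x\in M_{\reg}\cap U$,
\[(\dif\phi_x^{-1})^*\Ann(T_x\fol)=\Ann(\dif\phi_x T_x\fol)=\Ann(T_{\phi(x)}\fol).\]
Thus $\phi^\sharp$ maps $\{(x,\Ann(T_x\fol)):x\in M_{\reg}\cap U\}$ bijectively onto the corresponding set over $V$. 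Since $\phi^\sharp$ is a homeomorphism of the Grassmannian bundles over $U$ and $V$, it carries relative closures to relative closures. The closure in \eqref{equation conormal model}, intersected with $\Gr_q(T^*U)$, is the closure relative to that open set, so $\phi^\sharp$ maps $(\Gr M)^*\cap\Gr_q(T^*U)$ onto $(\Gr M)^*\cap\Gr_q(T^*V)$.

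Smoothness of $\phi^{\Gr}$ then follows because $\Gr M$ is a smooth embedded submanifold (by the listed properties) and $\phi^{\Gr}$ is the restriction of an ambient smooth map preserving it. The same holds for smooth dependence on parameters: a smooth family $\phi_t$ gives a smooth map $(t,\xi)\mapsto\phi_t^\sharp(\xi)$, and its restriction to the embedded submanifold $\mathbb{R}\times\rho^{-1}(U)$ lands in $\Gr M$. The restriction to an embedded submanifold, with values in an embedded submanifold, is smooth. The identity $\rho\circ\phi^{\Gr}=\phi\circ\rho$ holds by construction. Compatibility with composition is inherited from $\phi^\sharp$.

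It remains to show that $\phi^{\Gr}$ is foliated. The leaves of $\Gr\fol$ are the sets $\rho^{-1}(L)$. If $\phi(L\cap U)$ lies in a leaf $L'$, then
\[\phi^{\Gr}\bigl(\rho^{-1}(L)\cap\rho^{-1}(U)\bigr)\subset\rho^{-1}(L').\]
For the restricted foliation, each connected component of a leaf intersected with the open set maps into a single leaf, so $\phi^{\Gr}$ is foliated. Uniqueness, which justifies the word canonical, holds because any continuous lift must agree with $\rho_{\reg}^{-1}\circ\phi\circ\rho_{\reg}$ on the dense set $M_{\reg}^{\Gr}$. I expect the main obstacle to be exactly the preservation of regular points and of $T\fol$ under $\phi$, that is, preservation of leaf dimension. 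This is what makes the closure argument work. The rest is formal.
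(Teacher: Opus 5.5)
Your proposal is correct and follows the paper's proof essentially step for step: the cotangent Grassmannian action of $\phi$, conjugated by $B_{\metric}$, preserves the closure of the regular conormal graph in \eqref{equation conormal model}, and foliatedness, composition and parameter dependence are then formal. Your remarks on relative closures, smoothness and uniqueness only spell out what the paper leaves implicit. Your parenthetical sketch for the case where only $\phi$ is assumed foliated is vague, but you do not need it. Injectivity of $\dif\phi_x$ on $T_x\fol$ already sends $U\cap M_{\reg}$ into $M_{\reg}$ with $\dif\phi_x(T_x\fol)=T_{\phi(x)}\fol$ there. By continuity of $y\mapsto\Ann(T_y\fol)$, the regular conormal graph over the dense open set $\phi(U\cap M_{\reg})\subset V\cap M_{\reg}$ has the same closure as the graph over all of $V\cap M_{\reg}$.
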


\begin{proof}
The diffeomorphism $\phi$ acts on the cotangent Grassmannian by
\begin{equation}\label{equation cotangent action}
\Phi_\phi(x,P)=\left(\phi(x),\bigl((\dif\phi_x)^{-1}\bigr)^*P\right).
\end{equation}
Since $\dif\phi_x(T_x\fol)=T_{\phi(x)}\fol$, it sends $\Ann(T_x\fol)$ onto $\Ann(T_{\phi(x)}\fol)$. Hence $\Phi_\phi$ preserves the regular conormal graph and its closure \eqref{equation conormal model}, restricted to the respective open sets. Define
\[\phi^{\Gr}=B_{\metric}^{-1}\circ\Phi_\phi\circ B_{\metric}\]
on $\rho^{-1}(U)$. Its inverse is the lift of $\phi^{-1}$, and it covers $\phi$. It is foliated because the resolved leaves are inverse images of the original leaves, with connected components taken on restricted domains. Finally, the chain rule gives $(\phi\circ\psi)^{\Gr}=\phi^{\Gr}\circ\psi^{\Gr}$, and \eqref{equation cotangent action} gives smooth dependence on parameters.
\end{proof}

We can now also litf transverse vector fields.

\begin{prop}\label{proposition lifting transverse fields}
For every open subset $U\subset M$, transverse lifting defines a Lie algebra homomorphism
\[\trs(\fol|_U)\longrightarrow\trs(\Gr\fol|_{\rho^{-1}(U)})\]
which we will denote by $\overline X\mapsto(\overline X)^{\Gr}$, that is compatible with restrictions and sends transverse Killing fields to transverse Killing fields. If $X$ is a smooth foliated representative, its lifted flow has a smooth foliated generator $X^{\Gr}$ satisfying
\begin{equation}\label{equation related fields}
\dif\rho(X^{\Gr})=X\circ\rho\quad \text{and}\quad (\overline X)^{\Gr}=\overline{X^{\Gr}}.
\end{equation}
\end{prop}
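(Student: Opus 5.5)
The lift is built at the level of flows, using Proposition~\ref{proposition lifting diffeomorphisms}. Let $X$ be a smooth foliated representative of $\overline X\in\trs(\fol|_U)$, with local flow $\phi_t$. Each $\phi_t$ is a foliated diffeomorphism between open subsets of $U$, so Proposition~\ref{proposition lifting diffeomorphisms} gives lifts $\phi_t^{\Gr}$. These depend smoothly on $t$ and satisfy $\phi^{\Gr}_{t+s}=\phi^{\Gr}_t\circ\phi^{\Gr}_s$ wherever defined. They therefore form a smooth local flow on $\rho^{-1}(U)$. Its generator is
\[X^{\Gr}:=\tfrac{d}{dt}\big|_{t=0}\phi_t^{\Gr}.\]
It is smooth, and foliated because each $\phi^{\Gr}_t$ is foliated. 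Differentiating $\rho\circ\phi_t^{\Gr}=\phi_t\circ\rho$ at $t=0$ gives $\dif\rho(X^{\Gr})=X\circ\rho$, which is the first identity in \eqref{equation related fields}. Concretely, $X^{\Gr}$ is the $B_{\metric}$-conjugate of the restriction to $(\Gr M)^*$ of the canonical lift of $X$ to $\Gr_q(T^*M)$. That canonical lift is tangent to the closed submanifold $(\Gr M)^*$, because its flow preserves this set.

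Next I would check that $\overline{X^{\Gr}}$ depends only on $\overline X$, which is the main point. If $Y\in\vct(\fol|_U)$, then $Y^{\Gr}$ is smooth and satisfies $\dif\rho(Y^{\Gr})=Y\circ\rho$, which is tangent to $\fol$. On $M^{\Gr}_{\reg}$ the map $\rho$ is a foliated diffeomorphism, so there $Y^{\Gr}$ is tangent to $\Gr\fol$. Since $T\Gr\fol$ is a smooth subbundle (the foliation $\Gr\fol$ is regular) and $M^{\Gr}_{\reg}$ is dense, continuity forces $Y^{\Gr}\in\vct(\Gr\fol)$. The assignment $X\mapsto X^{\Gr}$ is $\RR$-linear and preserves brackets: on the dense set $M_{\reg}^{\Gr}$ it is just $\rho_{\reg}^{-1}$-pushforward, and both sides of each identity are smooth, so these relations extend by density. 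In particular $(X+Y)^{\Gr}=X^{\Gr}+Y^{\Gr}$ and $[X,Y]^{\Gr}=[X^{\Gr},Y^{\Gr}]$. Hence $\overline X\mapsto\overline{X^{\Gr}}$ is a well-defined Lie algebra homomorphism, and the second identity in \eqref{equation related fields} holds by definition. Compatibility with restrictions follows because flows restrict and the lift of Proposition~\ref{proposition lifting diffeomorphisms} is defined pointwise. The same density argument makes this consistent for every choice of representative.

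Finally, let $\overline X$ be transverse Killing. Then its restriction to the regular stratum satisfies $\mathcal L_X\metric^\intercal_{\reg}=0$. By \eqref{equation transverse metrics}, and since $X^{\Gr}$ is $\rho_{\reg}$-related to $X$, we get $\mathcal L_{X^{\Gr}}(\Gr\metric)^\intercal=\rho_{\reg}^*\mathcal L_X\metric^\intercal_{\reg}=0$ on $M^{\Gr}_{\reg}$. Both sides are smooth tensors on $\rho^{-1}(U)$, so density gives $\mathcal L_{X^{\Gr}}(\Gr\metric)^\intercal=0$ everywhere. Thus the lifted class is transverse Killing for the regular foliation $\Gr\fol$.

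The step I expect to be delicate is the smoothness and tangency of the generator at exceptional points. It rests on the canonical lift to $\Gr_q(T^*M)$ being tangent to $(\Gr M)^*$, and on smooth dependence of the flow lifts on $t$. Everything else reduces to density of $M^{\Gr}_{\reg}$ and continuity.
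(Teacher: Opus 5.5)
Your proposal is correct and follows essentially the same route as the paper. You lift the flow via Proposition~\ref{proposition lifting diffeomorphisms}, get linearity, brackets and independence of representatives from density of $M_{\reg}^{\Gr}$, and deduce the Killing property from \eqref{equation transverse metrics} and density; your check that leaf-tangent fields lift to leaf-tangent fields is equivalent to the paper's vanishing of the normal class of $X^{\Gr}-(X')^{\Gr}$ in $T\Gr M/T\Gr\fol$.
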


\begin{proof}
Lift the local flow $\phi_t$ of $X$ by Proposition~\ref{proposition lifting diffeomorphisms}. Functoriality makes $\phi_t^{\Gr}$ a smooth local flow, whose foliated generator $X^{\Gr}$ satisfies \eqref{equation related fields}. Over the regular locus,
\begin{equation}
X^{\Gr}_{x^{\Gr}}=(\dif\rho_{x^{\Gr}})^{-1}X_{\rho(x^{\Gr})}.
\end{equation}
Thus lifting preserves linear combinations and brackets there, and hence everywhere by smoothness and density. If $X-X'$ is leaf-tangent, the normal class of $X^{\Gr}-(X')^{\Gr}$ vanishes over the regular locus. It is a smooth section of the vector bundle $T\Gr M/T\Gr\fol$, so it vanishes everywhere. This proves independence of the representative and the Lie algebra assertion; the construction also commutes with restrictions.

For a transverse Killing field, \eqref{equation transverse metrics} and \eqref{equation related fields} give
\[\mathcal L_{X^{\Gr}}(\Gr\metric)^\intercal=\rho_{\reg}^*(\mathcal L_X\metric_{\reg}^\intercal)=0\]
over $\rho^{-1}(U\cap M_{\reg})$. The tensor on the left is smooth on the regular resolution, so it vanishes everywhere by density.
\end{proof}

\section{The strong Molino conjecture and the Molino sheaf of the resolution}
\label{section molino resolution}

We first establish the strong Molino conjecture for complete infinitesimally polar singular Riemannian foliations. Our argument combines Molino's local transverse flows with the descent of basic one-forms through the canonical resolution, a mechanism already present in \cite[Theorem~3.6 and Remark~3.1]{alexlytchak}.

Before the proof, we establish a useful fact. For a sufficiently small distinguished neighborhood $U$, write $r\colon U\to B$ for its plaque quotient, which in our setting is a Riemannian orbifold. A section $s\in\mathscr C_{\fol}(U)$ has a smooth horizontal representative on $U\cap M_{\reg}$. This representative is locally bounded, being the orthogonal projection of a continuous representative supplied by Molino's construction \cite[Lemma~6.5]{molino}. Its integral curves remain in leaf closures, which are contained in $M_{\reg}$ \cite[Lemma~6.4]{molino}.
Together with bounded speed, this ensures existence of its flow for a common sufficiently small time interval on a smaller neighborhood. The induced local isometries of the regular plaque quotient extend uniquely to $B$ by local metric completion, and the bounded-speed estimate gives joint continuity. This produces the continuous local isometric flow associated with $s$; compare \cite[proof of Theorem~3.4]{caramellosingularequivariant}. We now verify that it defines a smooth orbifold Killing field.

Choose an orbifold chart
\[q\colon(T,\mathrm{h}_T)\longrightarrow T/\Gamma\subset B.\]
After restricting domains and times if necessary, the local quotient flow is smooth, by \cite[Theorem~3.1]{alexradeschiflows}. Schwarz's isotopy lifting theorem \cite[Corollary~2.4]{schwarzlifting} then provides a smooth $\Gamma$-equivariant local lift $\widehat\varphi_t^s$, starting at the identity. On the dense open subset of $T$ where $\Gamma$ acts freely, $q$ is a local isometry. Hence $\widehat\varphi_t^s$ is a local isometry there, and
smoothness gives $(\widehat\varphi_t^s)^*\mathrm{h}_T=\mathrm{h}_T$ throughout its domain. Its infinitesimal generator $\widehat Y_s$ is therefore a smooth $\Gamma$-invariant Killing field. The chart generators agree under changes of uniformizing charts: they agree over the dense free loci, and hence everywhere by smoothness. They consequently define a smooth orbifold Killing field $Y_s$ on $B$, representing $s$ on the regular plaque quotient.

\begin{thm}[Strong Molino conjecture for infinitesimally polar foliations]\label{theorem strong molino infinitesimally polar}
Let $(M,\fol,\metric)$ be a complete infinitesimally polar singular Riemannian foliation. Then its Molino sheaf is smooth. More precisely, for every open set $V\subset M$ and every section $s\in\mathscr C_{\fol}(V)$, there is a unique smooth horizontal foliated vector field $X_s$ representing $s$ on $V$.
\end{thm}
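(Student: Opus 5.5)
The plan is to work locally and then glue by uniqueness. Uniqueness is immediate: two smooth horizontal representatives of $s$ on $V$ differ by a horizontal field that is leaf-tangent on the dense set $V\cap M_{\reg}$. There it vanishes, since the horizontal and leaf-tangent parts are complementary. By continuity it vanishes on $V$. Consequently local representatives automatically agree on overlaps, and it suffices to construct $X_s$ on a small distinguished neighborhood $U$ of each point, with plaque quotient $r\colon U\to B$ a Riemannian orbifold. For this I use the smooth orbifold Killing field $Y_s$ on $B$ produced just before the statement, which represents $s$ on the regular plaque quotient.

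The key step is to lift $Y_s$ to a smooth horizontal field on $U$, and I would do this by descent of basic one-forms. For each smooth function $f$ on $B$ (in the orbifold sense), the function $r^*f$ is a smooth basic function on $U$. Likewise, for each smooth orbifold one-form $\beta$ on $B$, the form $r^*\beta$ is a basic one-form. By Alexandrino--Lytchak \cite[Proposition~3.5]{alexlytchak}, basic forms on $U$ correspond to $\Gamma$-invariant forms in the chart $T$. Dually, I define $X_s$ on $U$ pointwise as the metric dual of a one-form, as follows. Consider the one-form $\theta_s:=r^*(\mathrm{h}(Y_s,\cdot))$, where $\mathrm{h}(Y_s,\cdot)$ is the smooth $\Gamma$-invariant one-form $\mathrm{h}_T(\widehat Y_s,\cdot)$ on $T$, hence an orbifold one-form on $B$. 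Its pullback $\theta_s$ is a smooth basic one-form on $U$, and I set $X_s:=\metric^\sharp\theta_s$. This field is smooth and horizontal, since $\theta_s$ annihilates $T\fol$. On $U\cap M_{\reg}$, where $r$ is a Riemannian submersion onto the regular part of $B$, the field $X_s$ is exactly the horizontal lift of $Y_s$. It therefore coincides with the horizontal representative of $s$ there, and in particular is foliated on the regular part.

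It remains to show that $X_s$ is foliated on all of $U$. This amounts to showing $[X_s,Z]\in\vct(\fol)$ for every $Z\in\vct(\fol)$. Such a bracket is tangent to leaves on the dense regular set, but being leaf-tangent is not a closed condition at singular points. Instead, I would use the flow. The flow of $X_s$ on $M_{\reg}$ maps leaves to leaves, because it covers the flow of $Y_s$. By continuity it sends each leaf closure into a leaf closure. Then I need to see that it preserves the singular strata and their leaves. On a stratum $\Sigma_\alpha$, I would check that $X_s$ is tangent to $\Sigma_\alpha$, because $\theta_s$ is basic and hence its dual is tangent to strata. I would also check that $X_s$ projects under $r$ to the restriction of $Y_s$ to the corresponding orbifold stratum. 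It follows that the flow of $X_s$ covers the isometric flow of $Y_s$ on $B$, and so maps plaques to plaques, since these are the fibers of $r$. A vector field whose flow is foliated is foliated, and this gives the conclusion. Finally, $X_s$ restricted to each stratum is transverse Killing, because it covers the Killing field $Y_s$. Hence $X_s$ represents $s$ in the sense of the singular Molino sheaf.

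I expect the main obstacle to be the identity $\dif r(X_s)=Y_s\circ r$ at singular points. This is the claim that the metric dual of a pulled-back orbifold one-form projects to the orbifold metric dual. For it I would rely on the infinitesimal polarity. At a singular point $x$, choose an infinitesimal section $S$. By \cite[Theorem~1.4]{lytchakthorbergsson}, $\exp_x(S)$ is locally a chart $T$ for $B$, with $r|_{\exp_x(S)}$ the quotient map by $\Gamma$ and isometric. Along this section, $\theta_s$ restricts to $\mathrm{h}_T(\widehat Y_s,\cdot)$, and the section is totally geodesic and orthogonal to the leaves. Therefore $X_s$ restricted to it is tangent to it and equals $\widehat Y_s$. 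Every point of $U$ lies on such a section, so this proves the projection identity everywhere.
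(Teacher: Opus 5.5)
\textbf{There is a genuine gap in the foliatedness step.} Your uniqueness argument and your construction of $X_s$ as the metric dual of the basic one-form $\theta_s=r^*\beta_s$ match the paper in substance. The smoothness of $\theta_s$ at singular points should be justified accurately, though. \cite[Proposition~3.5]{alexlytchak} says that $\rho^*$ is an isomorphism on basic forms. The paper gets $\theta_s$ by noting that $(r\circ\rho)^*\beta_s$ is evidently smooth on $\rho^{-1}(U)$, because $r\circ\rho$ locally factors as a Riemannian submersion followed by a chart projection, and then descending it.

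The gap is in your last paragraph. Infinitesimal polarity does not provide local sections: \cite[Theorem~1.4]{lytchakthorbergsson} gives an abstract orbifold chart $(T,\mathrm{h}_T)$, not a submanifold of $M$, and $\exp_x(S)$ is orthogonal to the leaves only at $x$ unless $\fol$ is locally polar. The Hopf fibration $\mathbb{S}^3\to\mathbb{S}^2(1/2)$ is regular, hence infinitesimally polar. There $\exp_x(\nu_x)$ is a great $2$-sphere of curvature $1$. It is not horizontal away from $x$, and it maps diffeomorphically, but not isometrically, onto a local quotient of curvature $4$. The same failure occurs at the singular points of the $\mathbb{T}^2$-action on $\mathbb{C}\times\mathbb{S}^3$ given by rotation on the first factor and the Hopf action on the second. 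Moreover, near singular points $r$ is not a smooth map into an orbifold chart (think of $\RR^2\to\RR^2/\mathbb{S}^1\cong[0,\infty)$), so $\dif r(X_s)=Y_s\circ r$ has no meaning there as written. Hence neither the projection identity nor the tangency to strata is established. Your argument is essentially one for locally polar foliations.

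\textbf{The missing idea} is to make the comparison at the level of flows, or upstairs, where it becomes a closed condition.

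\emph{Direct route.} Let $\Phi_t$ be the local flow of $X_s$ and $\varphi_t$ the isometric flow of $Y_s$ on $B$. For $y\in U\cap M_{\reg}$, the integral curve of $X_s$ is that of the horizontal Molino representative. This curve stays in $\overline{\fol(y)}\subset M_{\reg}$ by \cite[Lemma~6.4]{molino} and covers $\varphi_t$. Thus $r\circ\Phi_t=\varphi_t\circ r$ holds on a dense subset of a smaller neighborhood. Both sides are continuous maps into the Hausdorff space $B$, so the identity holds on the whole neighborhood. Then $\Phi_t$ maps fibers of $r$, that is plaques, into plaques. This gives foliatedness and identifies the quotient flow, with no stratum computation.

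\emph{The paper's route.} The paper instead lifts. The field $X_s^{\Gr}=\bigl((r\circ\rho)^*\beta_s\bigr)^{\sharp}$ is transverse Killing, hence foliated, for the regular foliation $\Gr\fol$. The relation $\dif\rho(X_s^{\Gr})=X_s\circ\rho$ holds over $M_{\reg}$; being an identity between smooth objects, it holds everywhere by density. Properness of $\rho$ then gives commuting flows on a uniform time interval. Since $\rho$ maps resolved leaves onto leaves, the flow of $X_s$ preserves $\fol$.
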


\begin{proof}
Fix $x\in V$ and choose a sufficiently small distinguished neighborhood $U\subset V$. Write $r\colon U\to B$ for its plaque quotient and $\mathrm{h}$ for the quotient metric. As we saw above, infinitesimal polarity makes $B$ a Riemannian orbifold, and the local flow of $s|_U$ has a smooth orbifold Killing generator $Y_s$. Hence $\beta_s=\mathrm{h}(Y_s,\cdot)$ is a smooth orbifold one-form.

Now put $U^{\Gr}=\rho^{-1}(U)$ and define
\[r^{\Gr}:=r\circ\bigl(\rho|_{U^{\Gr}}\bigr)\colon U^{\Gr}\longrightarrow B.\]
The resolution preserves the local transverse orbifold charts and their metrics (see \cite[Theorem~1.1 and Sections~4.4--4.5]{lytchak}) so, on sufficiently small simple open sets $W\subset U^{\Gr}$, we have
$r^{\Gr}|_W=q_W\circ p_W$, where $p_W\colon W\to T_W$ is a defining Riemannian submersion
and $q_W$ is the corresponding orbifold chart projection. Writing $\beta_{s,W}$ for the chart representative of
$\beta_s$, the forms $p_W^*\beta_{s,W}$ agree on overlaps
and define a smooth basic one-form
\[\alpha_s^{\Gr}:=(r^{\Gr})^*\beta_s\in\Omega^1(\Gr\fol|_{U^{\Gr}}).\]
On each defining submersion, the dual $X^{\Gr}_s=(\alpha_s^{\Gr})^{\sharp_{\Gr\metric}}$ is the horizontal lift of the chart generator, thus it represents a transverse Killing field.

By \cite[Proposition~3.5]{alexlytchak} (here also recalled in Proposition~\ref{proposition basic forms resolution}), there is a unique smooth basic one-form $\alpha_s$ on $U$ such that $\rho^*\alpha_s=\alpha^{\Gr}_s$. Define
\begin{equation}
X_s=\alpha_s^{\sharp_{\metric}}=\left[(\rho^*)^{-1}\bigl(\Gr\metric(X^{\Gr}_s,\cdot)\bigr)\right]^{\sharp_{\metric}}.
\end{equation}
The field $X_s$ is hence smooth on all of $U$, and it is horizontal because $\alpha_s$ annihilates every leaf-tangent vector.

By the construction of $\Gr\metric$, the differential of $\rho_{\reg}$ maps horizontal vectors isometrically
onto horizontal vectors \cite[Sections~4.4--4.5]{lytchak}. Together with $\rho^*\alpha_s=\alpha_s^{\Gr}$, this
shows that, over $U\cap M_{\reg}$, $X_s$ is the horizontal representative of $s$ and is $\rho$-related to $X_s^{\Gr}$. By smoothness and density, this relation extends to
\begin{equation}
\dif\rho(X_s^{\Gr})=X_s\circ\rho
\end{equation}
on $U^{\Gr}$. Uniqueness of integral curves makes their local flows commute with $\rho$. Properness makes each fiber compact, so the comparison holds for a common small time interval along the whole fiber. The flow upstairs preserves $\Gr\fol$; since resolved leaves map onto original leaves, the flow downstairs preserves $\fol$ and its strata. Its quotient flow is the prescribed isometric flow on $B$. Hence $X_s$ represents $s|_U$ and is transverse Killing on every stratum.

Finally, two smooth horizontal representatives of the same section agree on the regular locus, where a transverse class has a unique horizontal representative. So they agree everywhere by density. The representatives constructed on neighborhoods of the points of $V$ therefore agree on overlaps and glue to a smooth horizontal representative on $V$. The same argument proves its uniqueness.
\end{proof}

We now identify the Molino sheaves by lifting their sections through Proposition~\ref{proposition lifting transverse fields}.

\begin{thm}\label{theorem molino resolution}
Let $(M,\fol,\metric)$ be a complete infinitesimally polar singular Riemannian foliation. Then its geometric resolution has a canonical isomorphism of sheaves of Lie algebras
\begin{equation}\label{equation molino isomorphism}
\rho^{-1}\mathscr C_{\fol}\cong\mathscr C_{\Gr\fol},
\end{equation}
induced by the transverse lifting of Proposition~\ref{proposition lifting transverse fields}. In particular,
$\mathfrak g_{\fol}\cong\mathfrak g_{\Gr\fol}$.
\end{thm}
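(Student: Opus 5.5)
The plan is to define the comparison map on germs through the lifting homomorphism of Proposition~\ref{proposition lifting transverse fields}. We then show it lands in $\mathscr C_{\Gr\fol}$ and is bijective on stalks. For an open $U\subset M$ and $s\in\mathscr C_\fol(U)$, Theorem~\ref{theorem strong molino infinitesimally polar} gives the unique smooth horizontal foliated representative $X_s$. We set $\Lambda_U(s):=(\overline{X_s})^{\Gr}\in\trs(\Gr\fol|_{\rho^{-1}(U)})$. This is a transverse Killing field upstairs, and the assignment is a Lie algebra homomorphism compatible with restrictions, by Proposition~\ref{proposition lifting transverse fields}. It is injective, because $\rho_{\reg}$ is a foliated diffeomorphism and $M^{\Gr}_{\reg}$ is dense. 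Since $\rho$ is proper, the stalk of $\rho^{-1}\mathscr C_\fol$ at $y\in\Gr M$ is $\mathscr C_{\fol,\rho(y)}$. Moreover, germs on neighborhoods $\rho^{-1}(U)$ of the compact fiber $\rho^{-1}(\rho(y))$ restrict to germs at $y$. So $\Lambda$ induces a morphism of sheaves $\rho^{-1}\mathscr C_\fol\to\trs$, and it remains to identify its image with $\mathscr C_{\Gr\fol}$.

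\textbf{Step 1: image lies in $\mathscr C_{\Gr\fol}$.} Over $\rho^{-1}(U\cap M_{\reg})$, the foliated diffeomorphism $\rho_{\reg}$ intertwines transverse metrics by \eqref{equation transverse metrics}. It also intertwines the foliations, and hence their Molino sheaves, which are canonically attached to $(\fol|_{M_{\reg}},\metric^\intercal_{\reg})$. Thus the lift of $s$ is a section of $\mathscr C_{\Gr\fol}$ over the regular part. To extend this across the exceptional set I would use the following property of the Molino sheaf of the complete regular foliation $\Gr\fol$: it is locally constant, and its sections are exactly the transverse Killing fields whose germs at regular points are Molino germs. Concretely, take a connected small $W\ni y$ on which $\mathscr C_{\Gr\fol}$ is constant with basis $\overline{Z_1},\dots,\overline{Z_r}$. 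On the dense, connected-enough set $W\cap M^{\Gr}_{\reg}$, the field $\Lambda(s)$ agrees with a constant combination $\sum c_i\overline{Z_i}$. Two transverse Killing fields of a regular Riemannian foliation that agree on an open set agree on the connected component, by uniqueness of Killing fields given their $1$-jet. Hence $\Lambda(s)=\sum c_i \overline{Z_i}$ on $W$. The point needing care is connectivity of $W\cap M^{\Gr}_{\reg}$. The exceptional set $\rho^{-1}(M\setminus M_{\reg})$ should have codimension at least one, and I would rather avoid needing more. So I would argue instead by analytic continuation of Killing fields along the regular foliation $\Gr\fol$ on all of $W$. Since $W$ is connected, uniqueness propagates from any nonempty open subset, and connectivity of the regular part is not needed.

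\textbf{Step 2: surjectivity on stalks.} Let $\overline Z$ be a germ of $\mathscr C_{\Gr\fol}$ at $y$, defined on a saturated-in-fiber neighborhood. Using local constancy near the compact fiber $F=\rho^{-1}(x)$, and the fact that $\mathscr C_{\Gr\fol}$ restricted to the regular part is the transport of $\mathscr C_{\fol}|_{M_{\reg}}$, I get a Molino section $s$ of $\fol$ on $U\cap M_{\reg}$ for a small $U\ni x$. Here $s$ corresponds to $\overline Z$ on $W\cap M^{\Gr}_{\reg}$. Because $\mathscr C_\fol$ is the continuous extension of the regular Molino sheaf, locally constant on $M$ \cite[Lemma~6.5]{molino}, $s$ extends to a section over $U$. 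By Step 1 together with Killing uniqueness, $\Lambda(s)=\overline Z$ near $y$. \textbf{Main obstacle:} this step needs a connected neighborhood of $x$ whose regular part still carries the germ. It is delicate if $\overline Z$ is only a germ at one point $y$ of the fiber rather than near the whole of $F$. I would first show that a germ at $y$ extends uniquely along the connected fiber $F$. Indeed, $\mathscr C_{\Gr\fol}$ is locally constant and $F$ is compact and connected, being a connected Grassmannian-type fiber of sections. So the germ extends to a neighborhood of $F$, whose image contains a small ball $U$ around $x$, and $U\cap M_{\reg}$ is connected since strata have codimension $\geq2$. This would settle surjectivity.

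Finally, $\Lambda$ is an isomorphism of sheaves of Lie algebras by Steps 1 and 2. Comparing typical stalks at a regular point gives $\mathfrak g_\fol\cong\mathfrak g_{\Gr\fol}$, since both stalks are the opposite of the respective structural algebras.
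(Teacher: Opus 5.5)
Your Step 1 follows the paper's own mechanism. You identify the Molino sheaves over the regular locus via $\rho_{\reg}$, then carry a Molino germ across the exceptional set using $1$-jet uniqueness of Killing fields on a connected trivializing neighborhood of $y$. For the regular identification, you should also say why $\mathscr C_{\Gr\fol}|_{M^{\Gr}_{\reg}}$ is the Molino sheaf of the restricted foliation. The reason is that $M^{\Gr}_{\reg}$ is a union of leaf closures, since $\rho(\overline{\Gr\fol(z)})\subset\overline{\fol(\rho(z))}\subset M_{\reg}$.

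The genuine gap is in Step 2. You claim that a germ of $\mathscr C_{\Gr\fol}$ at $y$ extends uniquely to a neighborhood of $F=\rho^{-1}(x)$ because the sheaf is locally constant and $F$ is compact and connected. That inference fails for locally constant sheaves in general: continuation along loops in $F$ can have monodromy, and $F$ need not be simply connected. The $\mathbb{S}^2$ example at the end of the paper shows this concretely. There $F\cong\mathbb{R}P^1$ is a leaf of $\Gr\fol$ with a M\"obius band neighborhood. The orientation sheaf of $\nu\Gr\fol$ is locally constant, but its germs at $y$ do not extend around $F$. For $\mathscr C_{\Gr\fol}$ the monodromy around $F$ is in fact trivial, but that follows from $\rho^{-1}\mathscr C_\fol\cong\mathscr C_{\Gr\fol}$, which is what you are trying to prove. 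As written, surjectivity is therefore unsupported.

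The detour is unnecessary, and so is any connectivity of $F$ or of $U\cap M_{\reg}$: surjectivity follows from Step 1 by the same propagation.
\begin{itemize}
\item Take a connected $U\ni x$ with a basis $s_1,\dots,s_r$ of $\mathscr C_\fol(U)$, and a connected $W\subset\rho^{-1}(U)$ around $y$ on which $\overline Z$ is defined.
\item Choose a component $O$ of $W\cap M^{\Gr}_{\reg}$. On $O$, $\overline Z$ transports to a section of $\mathscr C_\fol$ over the connected open set $\rho(O)\subset U$, hence to some $\sum c_is_i$ there.
\item Then $\overline Z$ and $\Lambda(\sum c_is_i)$ are transverse Killing fields on $W$ that agree on $O$, so they agree on $W$.
\end{itemize}
Equivalently, count dimensions. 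By Step 1 the germs $\Lambda(s_i)_y$ lie in $\mathscr C_{\Gr\fol,y}$ and are independent. Both stalks have rank $r$: compare at a regular point, using that $\Gr M$ is connected.

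This is how the paper argues. It lifts a basis $s_i$ of $\mathscr C_\fol(U)$ and notes that the germs of the $s_i^{\Gr}$ at a regular point $y^{\Gr}\in W$ form a basis of the upstairs stalk. It extends them to a basis $t_i$ of the constant sheaf on a simple $W\ni x^{\Gr}$. Killing uniqueness on the connected local quotient then gives $t_i=s_i^{\Gr}|_W$, which yields membership and bijectivity on stalks at once.

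The same local-constancy remark is also what gives injectivity on stalks. Your density argument alone does not, since $\rho(W)$ need not be a neighborhood of $x$.
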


\begin{proof}
The regular locus $M_{\reg}$ is an open union of leaf closures \cite[Lemma~6.4]{molino}. The same holds for $M_{\reg}^{\Gr}$: for $z\in M_{\reg}^{\Gr}$, continuity of $\rho$ gives
\[\rho\bigl(\overline{\Gr\fol(z)}\bigr)\subset\overline{\fol(\rho(z))}\subset M_{\reg}.\]
Therefore, taking the Molino sheaf commutes with restriction to such open sets. The foliated transverse isometry $\rho_{\reg}$ induces an isomorphism between the transverse orthonormal frame bundles of $\fol_{\reg}$ and $\Gr\fol|_{M_{\reg}^{\Gr}}$, preserving their canonical transverse parallelisms (see \cite[Section~5.3]{molino} for details). It therefore identifies their commuting sheaves and the projected Molino sheaves. Since $\mathscr C_{\fol}|_{M_{\reg}}=\mathscr C_{\fol_{\reg}}$ (by definition of $\mathscr C_{\fol}$ as an extension), we obtain
\begin{equation}\label{equation regular molino sheaves}
\mathscr C_{\Gr\fol}|_{M_{\reg}^{\Gr}} \cong (\rho^{-1}\mathscr C_{\fol})|_{M_{\reg}^{\Gr}}.
\end{equation}
On this locus, the identification is precisely the transverse lifting of Proposition~\ref{proposition lifting transverse fields}, because $\rho_{\reg}$ is a foliated diffeomorphism.

Fix $x^{\Gr}\in\Gr M$ and choose a connected neighborhood $U$ of $\rho(x^{\Gr})$ trivializing $\mathscr C_{\fol}$, with basis $s_1,\ldots,s_r$. Theorem~\ref{theorem strong molino infinitesimally polar} and Proposition~\ref{proposition lifting transverse fields} give smooth transverse Killing lifts $s_i^{\Gr}$ on $\rho^{-1}(U)$. Choose a simple open set $W\subset\rho^{-1}(U)$ containing $x^{\Gr}$ and trivializing $\mathscr C_{\Gr\fol}$, and choose $y^{\Gr}\in W\cap M_{\reg}^{\Gr}$. By \eqref{equation regular molino sheaves}, the germs of $s_i^{\Gr}$ there form a basis of the upstairs Molino stalk. They extend uniquely to a basis $t_1,\ldots,t_r$ of that constant sheaf on $W$.

The transverse Killing fields $t_i$ and $s_i^{\Gr}|_W$ agree near $y^{\Gr}$. They project to Killing fields on the connected local quotient of $W$, where uniqueness from their values and first derivatives at a point (see e.g. \cite[Proposition~8.1.4]{petersen}) makes them agree throughout $W$. Thus the lifted germs belong to the Molino sheaf and form a basis also at $x^{\Gr}$. Lifting therefore gives isomorphisms on all stalks. Its compatibility with restrictions and independence of representatives glue these into \eqref{equation molino isomorphism}, and its bracket compatibility makes this an isomorphism of Lie algebra sheaves.
\end{proof}

\begin{cor}\label{corollary killing resolution}
Let $(M,\fol,\metric)$ be a complete infinitesimally polar singular Killing foliation. Then $\Gr\fol$ is a complete regular Killing foliation with the same structural Lie algebra $\mathfrak{a}$. Its natural transverse $\mathfrak{a}$-action is the lift of $\actionarrow{\mathfrak{a}}{\fol}$.
\end{cor}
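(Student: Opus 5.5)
The plan is to read everything off Theorem~\ref{theorem molino resolution} together with the structural facts about the resolution recalled in Section~\ref{section lifting}.

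\emph{Completeness and regularity.} These are among the listed properties of Lytchak's construction: $\Gr\fol$ is a regular foliation of codimension $q$ on the connected manifold $\Gr M$, and $\Gr\metric$ is adapted to it. Moreover, $\Gr\metric$ is complete whenever $\metric$ is. So $\Gr\fol$ is a complete regular Riemannian foliation, and Molino's theory applies to it.

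\emph{Global constancy.} Since $\fol$ is singular Killing, $\mathscr C_\fol$ is globally constant, say $\mathscr C_\fol\cong\underline{\mathfrak a}_M$. Concretely, there are global sections $s_1,\dots,s_d\in\mathscr C_\fol(M)$ whose germs form a basis of every stalk. The inverse image of a constant sheaf is constant, so $\rho^{-1}\underline{\mathfrak a}_M=\underline{\mathfrak a}_{\Gr M}$. Hence Theorem~\ref{theorem molino resolution} gives a global isomorphism $\mathscr C_{\Gr\fol}\cong\underline{\mathfrak a}_{\Gr M}$ of sheaves of Lie algebras. It follows that $\Gr\fol$ is a Killing foliation. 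Its structural algebra, the typical stalk, is isomorphic to $\mathfrak a$; this is also the last assertion of Theorem~\ref{theorem molino resolution}. Since $\mathfrak a$ is Abelian, it coincides with its opposite, so there is no ambiguity about orientation conventions.

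\emph{The action.} The isomorphism in Theorem~\ref{theorem molino resolution} is induced by transverse lifting. Therefore the global section of $\mathscr C_{\Gr\fol}$ corresponding to $s_i$ is $(s_i)^{\Gr}$ in the sense of Proposition~\ref{proposition lifting transverse fields}. By Theorem~\ref{theorem strong molino infinitesimally polar}, $s_i$ has a global smooth horizontal representative $X_{s_i}$. Proposition~\ref{proposition lifting transverse fields} then gives a foliated lift $X_{s_i}^{\Gr}$ that is $\rho$-related to it and represents $(s_i)^{\Gr}$. Because lifting is a Lie algebra homomorphism, the assignment $\overline{X}\mapsto(\overline X)^{\Gr}$ for $\overline X\in\mathfrak a$ is precisely the natural transverse $\mathfrak a$-action of $\Gr\fol$, and it covers $\actionarrow{\mathfrak a}{\fol}$.

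\emph{Main obstacle.} The only delicate point is to make sure that the local stalkwise isomorphisms of Theorem~\ref{theorem molino resolution} glue to a \emph{global} trivialization, rather than just a local one. This holds because the lifts $(s_i)^{\Gr}$ are globally defined sections on $\rho^{-1}(M)=\Gr M$. By the theorem, their germs form a basis at every point. That yields the global trivialization directly.
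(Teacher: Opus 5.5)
Your proposal is correct and follows essentially the same route as the paper: the canonical isomorphism $\rho^{-1}\mathscr C_{\fol}\cong\mathscr C_{\Gr\fol}$ of Theorem~\ref{theorem molino resolution}, combined with $\rho^{-1}\underline{\mathfrak a}_M\cong\underline{\mathfrak a}_{\Gr M}$, gives global constancy, and transverse lifting identifies the structural algebras and actions. Your ``main obstacle'' is not really an obstacle: the theorem already gives a global isomorphism of sheaves on $\Gr M$, so global constancy upstairs follows at once from global constancy downstairs.
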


\begin{proof}
Theorem~\ref{theorem molino resolution} gives $\mathscr C_{\Gr\fol}\cong\rho^{-1}\underline{\mathfrak a}_M
\cong\underline{\mathfrak a}_{\Gr M}$. The transverse lifting from Proposition \ref{proposition lifting transverse fields} identifies the structural algebras and actions.
\end{proof}

\begin{cor}\label{corollary killing criterion infinitesimally polar}
A complete infinitesimally polar singular Riemannian foliation is Killing if and only if its Molino sheaf is globally constant. In particular, every such foliation on a simply connected manifold is Killing.
\end{cor}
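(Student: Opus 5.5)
The first assertion follows almost immediately from the definition and Theorem~\ref{theorem strong molino infinitesimally polar}. By definition, a complete singular Riemannian foliation is a singular Killing foliation when its Molino sheaf is smooth and globally constant. Theorem~\ref{theorem strong molino infinitesimally polar} shows that, in the infinitesimally polar class, smoothness always holds: every section of $\mathscr C_{\fol}$ over every open set has a unique smooth horizontal foliated representative. Hence, for such foliations, the Killing condition reduces to global constancy of $\mathscr C_{\fol}$, and the equivalence follows in both directions.

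For the second assertion, assume that $M$ is simply connected. I would use the standard fact that a locally constant sheaf of Lie algebras on a connected, locally path-connected space is classified by its monodromy representation of $\pi_1$ into the automorphism group of a typical stalk. Concretely, $\mathscr C_{\fol}$ is locally constant on $M$ by \cite[Lemma~6.5]{molino}, so it is the sheaf of flat sections of a local system, which is a covering-type object over $M$. Simple connectedness makes the monodromy trivial. Therefore, fixing a stalk $\mathscr C_{\fol,x_0}\cong\mathfrak g$, parallel transport along paths gives a well-defined isomorphism $\mathscr C_{\fol}\cong\underline{\mathfrak g}_M$ of sheaves of Lie algebras. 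This identification respects the bracket because the local trivializations are Lie algebra isomorphisms and transport is built from them. Thus the sheaf is globally constant, and by the first part $\fol$ is Killing.

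The only point requiring care is the monodromy argument on a manifold that is not compact: one should check that path-independence follows from homotopy invariance of transport. This holds by the usual Lebesgue-number argument on the homotopy square, using trivializing neighborhoods of $\mathscr C_{\fol}$. Alternatively, I could pass to $M_{\reg}$, which is connected. However, $M_{\reg}$ need not be simply connected, so it is cleaner to work directly on $M$ with the extended sheaf. As a consequence, the structural algebra is then Abelian, as recalled in Section~\ref{section background}, though this is not needed for the statement.
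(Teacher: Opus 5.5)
Your proposal is correct and follows the paper's own proof. Smoothness of $\mathscr C_{\fol}$ is automatic by Theorem~\ref{theorem strong molino infinitesimally polar}, which reduces the Killing condition to global constancy, and on a simply connected manifold the locally constant sheaf $\mathscr C_{\fol}$ is globally constant because continuation of germs along paths has trivial monodromy.
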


\begin{proof}
Smoothness of $\mathscr C_\fol$ is automatic by Theorem~\ref{theorem strong molino infinitesimally polar}. On a connected simply connected manifold, a locally constant sheaf is globally constant, since continuation of germs along paths has trivial monodromy.
\end{proof}

\section{Basic cohomology and transverse curvature through resolutions}
\label{section cohomology curvature}

Let $(M,\fol,\metric)$ be an infinitesimally polar singular Riemannian foliation, and let
\[\rho\colon(\Gr M,\Gr\fol,\Gr\metric)\longrightarrow(M,\fol,\metric)\]
be its geometric resolution. As before, we write $q=\codim(\fol)$, $M_{\reg}$ for the regular locus, and $M_{\reg}^{\Gr}=\rho^{-1}(M_{\reg})$. In this section we recall the preservation of basic forms and compare the transverse curvatures of the two foliations. Let us start by recalling Alexandrino--Lytchak's preservation of basic forms in the form needed below.

\begin{prop}[{\cite[Proposition~3.5]{alexlytchak}}]\label{proposition basic forms resolution}
For every open subset $U\subset M$, pullback induces an isomorphism of differential graded algebras
\[\rho^*\colon\Omega(\fol|_U)\longrightarrow\Omega(\Gr\fol|_{\rho^{-1}(U)}),\]
which commutes with restriction to open subsets. In particular, the induced map $\rho^*\colon\Ho(\fol)\longrightarrow\Ho(\Gr\fol)$ is also an isomorphism of graded algebras.
\end{prop}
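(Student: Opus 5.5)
The plan is to show that pullback by $\rho$ sends basic forms to basic forms, that it is injective, and that every basic form upstairs descends; this last step carries the real content. Compatibility with $\dif$, wedge products and restriction is automatic for pullbacks, and the statement in cohomology follows at once from an isomorphism of differential graded algebras.

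\emph{Well-definedness and injectivity.} Let $\omega\in\Omega(\fol|_U)$. Over $\rho^{-1}(U\cap M_{\reg})$ the map $\rho_{\reg}$ is a foliated diffeomorphism, so $\rho^*\omega$ satisfies $\iota_Y\rho^*\omega=0$ and $\mathcal L_Y\rho^*\omega=0$ for $Y\in\vct(\Gr\fol)$ there. Both expressions are smooth, so they vanish on all of $\rho^{-1}(U)$ by density of $M_{\reg}^{\Gr}$. Hence $\rho^*\omega$ is basic. Injectivity also follows from density: if $\rho^*\omega=0$, then $\omega$ vanishes on the dense set $U\cap M_{\reg}$, hence everywhere.

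\emph{Surjectivity.} Given a basic form $\eta$ on $\rho^{-1}(U)$, set $\omega_0:=(\rho_{\reg}^{-1})^*\eta$ on $U\cap M_{\reg}$; this is basic for $\fol_{\reg}$. The task is to show that $\omega_0$ extends smoothly across the singular strata. The question is local, so fix $x\in U$ and a small distinguished neighborhood $U'$ with plaque quotient $r\colon U'\to B$, which is a Riemannian orbifold by infinitesimal polarity \cite[Theorem~1.4]{lytchakthorbergsson}. As in the proof of Theorem~\ref{theorem strong molino infinitesimally polar}, the resolution preserves the local transverse orbifold charts, so on small simple sets $W$ one can write $r\circ\rho|_W=q_W\circ p_W$. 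A basic form upstairs is locally $p_W^*$ of a form on $T_W$. These chart forms agree on the free loci, so they are $\Gamma$-invariant and compatible, and they define a smooth orbifold form $\beta$ on $B$ with $\eta=(r\circ\rho)^*\beta$. It therefore suffices to show that $r^*\beta$ is a smooth form on $U'$. Its restriction to $U'\cap M_{\reg}$ is $\omega_0$, and it is automatically basic.

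\emph{Main obstacle.} The hard step is smoothness of $r^*\beta$ at singular points, because $r$ is not a smooth map to a chart. I would use the slice at $x$. The slice of radius $\epsilon$ in $\nu_x\fol(x)$ carries the infinitesimal foliation, which is polar with section $S$ and Weyl group $W_x$. The orbifold chart is $S\to S/W_x$, and restriction to $S$ identifies basic forms with $W_x$-invariant forms on $S$. The nontrivial input is the polar analogue of the Chevalley restriction theorem for differential forms: every $W_x$-invariant smooth form on $S$ extends to a smooth basic form on the slice, obtained for example from Schwarz-type results on invariant smooth functions combined with the polar structure. This is exactly what \cite[Proposition~3.5]{alexlytchak} supplies. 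Taking the product with a plaque gives the extension to $U'$, and the local extensions agree on overlaps by density, which yields $\omega$ with $\rho^*\omega=\eta$.
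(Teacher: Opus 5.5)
The paper gives no proof of this proposition; it imports it from \cite{alexlytchak}. Your checks that $\rho^*$ sends basic forms to basic forms and is injective (both by density of $M_{\reg}^{\Gr}$) are correct, and so is its compatibility with $\dif$, products, restrictions and cohomology. The gap is surjectivity, which is the only part with content. Your decisive step is that every smooth $W_x$-invariant form on the infinitesimal section $S$ is the restriction of a smooth $\fol_x$-basic form. You justify it by saying this ``is exactly what \cite[Proposition~3.5]{alexlytchak} supplies''. But that reference is the very statement you are proving, so the argument is circular exactly where the work lies. Your hint (Schwarz-type results on invariant functions) is also not enough by itself: a result on invariant functions does not directly give one on invariant forms. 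A self-contained argument needs the following inputs.
\begin{enumerate}[(a)]
\item Basic invariants $f_1,\ldots,f_q$ of the reflection group $W_x$ on $S$.
\item Solomon's theorem, in the smooth form used by Michor for polar representations. It writes a $W_x$-invariant $k$-form as $\sum_I g_I\,\dif f_{i_1}\wedge\cdots\wedge\dif f_{i_k}$ with smooth invariant $g_I$.
\item Schwarz's theorem, giving $g_I=h_I\circ(f_1,\ldots,f_q)$.
\item Extensions $\tilde f_i$ of the $f_i$ to $\fol_x$-basic polynomials on $\nu_x\fol(x)$.
\end{enumerate}
Then $\sum_I h_I(\tilde f_1,\ldots,\tilde f_q)\,\dif\tilde f_{i_1}\wedge\cdots\wedge\dif\tilde f_{i_k}$ is the required extension. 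Input (d) is not Chevalley's theorem, because $\fol_x$ need not be homogeneous. For instance, the dilation-invariant extension to $\RR^n$ of an inhomogeneous isoparametric foliation of $\mathbb{S}^{n-1}$ is polar and is its own infinitesimal foliation at $0$. One needs instead the Chevalley-type restriction theorem for isoparametric foliations due to Terng.

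There is a second, smaller gap. You pass from the orbifold form $\beta$, defined through the resolution's transverse charts $T_W$, to a smooth $W_x$-invariant form on $S$ by declaring ``the orbifold chart is $S\to S/W_x$''. When $\fol$ is not locally polar, $\exp_x(S)$ is not a section, and the orbifold structure of \cite{lytchakthorbergsson} is defined metrically. So the smooth compatibility of $T_W$ with $S$ needs an argument. You can avoid $\beta$ altogether, as follows.
\begin{enumerate}[(1)]
\item The conormal model \eqref{equation conormal model} shows that the resolution depends only on the foliation. The cotangent action in the proof of Proposition~\ref{proposition lifting diffeomorphisms} transports it along foliated diffeomorphisms.
\item After shrinking $U'$, $\exp_x$ and flows of leaf-tangent fields give a foliated diffeomorphism onto $U'$. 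Its source is the product of a ball carrying a single leaf with a small ball of $(\nu_x\fol(x),\fol_x)$.
\item In this flat polar model, $\Ann(T_y\fol_x)=\metric_x^\flat(S)$ for regular $y\in S$. So $y\mapsto(y,S)$ yields, after transport, a smooth map $\tilde\iota$ from a small ball of $S$ into $\rho^{-1}(U')$ lifting the inclusion of the section.
\item Then $\sigma=\tilde\iota^*\eta$ is smooth. It is $W_x$-invariant, since $W_x$ acts by holonomy on the regular part, and by density.
\item Extend $\sigma$ as above to a basic form $\omega$. On the regular part, basic forms are determined by their pullbacks to the section, which meets every regular leaf orthogonally. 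Hence $\omega=\omega_0$ there, and $\rho^*\omega=\eta$ by density.
\end{enumerate}
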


It therefore follows that, if $\Ho(\fol)$ is finite dimensional, then
\[\chi(\fol)=\chi(\Gr\fol).\]

Let us now compare equivariant cohomologies. We use the basic Cartan model of \cite[Section~5]{caramellosingularequivariant}. A local foliated representative $X_A$ of $A\in\mathfrak a$ acts on basic forms by $\iota_A=\iota_{X_A}$ and $\mathcal L_A=\mathcal L_{X_A}$. These operators preserve basicness and are independent of representatives, so they agree on overlaps. With linear polynomials in $S(\mathfrak a^*)$ placed in degree two, the Cartan complex is
\[C_{\mathfrak a}(\fol)=\bigl(S(\mathfrak a^*)\otimes\Omega(\fol)\bigr)^{\mathfrak a}\]
with equivariant differential $(\dif_{\mathfrak a}\omega)(A)=\dif(\omega(A))-\iota_A\omega(A)$. Here $\omega$ is a polynomial map to basic forms; invariance and Cartan's identity give $\dif_{\mathfrak a}^2=0$. Its cohomology is $\Ho_{\mathfrak a}(\fol)$. The structural action $\actionarrow{\mathfrak{a}}{\fol}$ is \emph{equivariantly formal} when $\Ho_{\mathfrak a}(\fol)\cong S(\mathfrak a^*)\otimes\Ho(\fol)$ as graded $S(\mathfrak a^*)$-modules.

\begin{prop}\label{proposition equivariant cohomology resolution}
Pullback induces an isomorphism of graded $S(\mathfrak{a}^*)$-algebras
\[\rho^*\colon\Ho_{\mathfrak{a}}(\fol)\longrightarrow\Ho_{\mathfrak{a}}(\Gr\fol).\]
Moreover, $\actionarrow{\mathfrak{a}}{\fol}$ is equivariantly formal if and only
if $\actionarrow{\mathfrak{a}}{\Gr\fol}$ is equivariantly formal.
\end{prop}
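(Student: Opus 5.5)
The plan is to show that $\rho^*$ is already an isomorphism of Cartan complexes, not merely of their cohomologies. Proposition~\ref{proposition basic forms resolution} gives an isomorphism of differential graded algebras $\rho^*\colon\Omega(\fol)\to\Omega(\Gr\fol)$. Tensoring with the identity of $S(\mathfrak a^*)$ yields an isomorphism
\[
S(\mathfrak a^*)\otimes\Omega(\fol)\longrightarrow S(\mathfrak a^*)\otimes\Omega(\Gr\fol)
\]
of graded algebras over $S(\mathfrak a^*)$. It therefore suffices to check that $\rho^*$ intertwines the operators $\iota_A$ and $\mathcal L_A$ for every $A\in\mathfrak a$. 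Here the action on $\Gr\fol$ is the lifted action of Corollary~\ref{corollary killing resolution}, so that $\mathfrak a$ acts on both sides through the same Lie algebra.

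Fix $A\in\mathfrak a$ and work locally. By Theorem~\ref{theorem strong molino infinitesimally polar}, the section $A$ has a smooth foliated representative $X_A$ on a neighborhood $U$ of any point of $M$. By Proposition~\ref{proposition lifting transverse fields}, it has a foliated lift $X_A^{\Gr}$ on $\rho^{-1}(U)$ with $\dif\rho(X_A^{\Gr})=X_A\circ\rho$. By Corollary~\ref{corollary killing resolution}, $X_A^{\Gr}$ represents $A$ upstairs. Because the two fields are $\rho$-related, naturality of contraction and Lie derivative gives, for every form $\omega$ on $U$,
\[
\rho^*(\iota_{X_A}\omega)=\iota_{X_A^{\Gr}}\rho^*\omega
\quad\text{and}\quad
\rho^*(\mathcal L_{X_A}\omega)=\mathcal L_{X_A^{\Gr}}\rho^*\omega .
\]
Both operators are independent of the chosen representatives and restrict well to open sets. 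Since $\rho^*$ commutes with restrictions (Proposition~\ref{proposition basic forms resolution}), these local identities glue to global identities on basic forms.

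With the intertwining established, $\rho^*$ maps $\mathfrak a$-invariant elements bijectively onto $\mathfrak a$-invariant elements, because its inverse also intertwines the operators. It also commutes with $\dif_{\mathfrak a}$, since $\dif_{\mathfrak a}$ is built from $\dif$ and $\iota_A$ only. Hence $\rho^*$ is an isomorphism of complexes $C_{\mathfrak a}(\fol)\to C_{\mathfrak a}(\Gr\fol)$. It is multiplicative and $S(\mathfrak a^*)$-linear, so it induces an isomorphism of graded $S(\mathfrak a^*)$-algebras on cohomology.

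For equivariant formality, combine this with $\Ho(\fol)\cong\Ho(\Gr\fol)$ from Proposition~\ref{proposition basic forms resolution}. This gives $\Ho_{\mathfrak a}(\fol)\cong S(\mathfrak a^*)\otimes\Ho(\fol)$ if and only if $\Ho_{\mathfrak a}(\Gr\fol)\cong S(\mathfrak a^*)\otimes\Ho(\Gr\fol)$ as graded $S(\mathfrak a^*)$-modules. The only delicate point I expect is confirming that the lifted representative $X_A^{\Gr}$ really represents the same element $A$ upstairs. That is exactly the content of Theorem~\ref{theorem molino resolution} and Corollary~\ref{corollary killing resolution}, so the rest is formal naturality.
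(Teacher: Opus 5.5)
Your proposal is correct and follows essentially the same route as the paper. Both arguments identify the basic differential graded algebras via Proposition~\ref{proposition basic forms resolution}, use that the lifted structural fields are $\rho$-related to the original representatives to get $\rho^*\iota_A=\iota_A\rho^*$ and $\rho^*\mathcal L_A=\mathcal L_A\rho^*$, conclude that $1\otimes\rho^*$ is an isomorphism of the invariant Cartan complexes, and then obtain formality from the ordinary basic cohomology isomorphism; you simply spell out the local-to-global gluing and the identification of the lifted action through Corollary~\ref{corollary killing resolution}, which the paper leaves implicit.
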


\begin{proof}
Proposition~\ref{proposition basic forms resolution} identifies the basic differential graded algebras. The lifted structural fields are $\rho$-related to the original representatives, so naturality gives
\[\rho^*\iota_A=\iota_A\rho^*\quad\text{and}\quad \rho^*\mathcal L_A=\mathcal L_A\rho^*.\]
Thus $1\otimes\rho^*$ identifies the invariant Cartan complexes, preserving their differentials and polynomial module structures. Passing to cohomology proves the isomorphism. The preservation of ordinary basic cohomology then gives the formality assertion.
\end{proof}

We now study the behavior of transverse curvature through geometric resolution. We define transverse curvature through the local quotients, equivalently through the basic connection \cite[Section~2.3]{positivecaramellotoben}. Let $(M,\fol,\metric)$ first be regular, and put $H=(T\fol)^{\perp_\metric}$. On a simple open set $W$, let $\pi\colon(W,\metric)\to(B,\mathrm{h}_B)$ be a defining Riemannian submersion. The \emph{transverse curvature} and \emph{transverse Ricci tensors} are
\[R_{\fol}^{\intercal}=\pi^*R^{\mathrm{h}_B}\quad \text{and}\quad \ric_{\fol}^{\intercal}=\pi^*\ric_{\mathrm{h}_B},
\]
where $R^{\mathrm{h}_B}$ denotes the covariant $(0,4)$-curvature tensor of $B$. Changes of local quotient are local isometries, so these tensors agree on overlaps and vanish whenever an argument is leaf-tangent. For a horizontal two-plane $\sigma=\spannn\{u,v\}\subset H_x$, the respective transverse sectional curvature is
\[\sec_{\fol}(x,\sigma)=\frac{R_{\fol}^{\intercal}(u,v,v,u)}{\metric(u,u)\metric(v,v)-\metric(u,v)^2}.\]
For a singular foliation, these definitions apply on $M_{\reg}$, with horizontal bundle $H_{\reg}$. Upstairs, the horizontal bundle $H^{\Gr}=(T\Gr\fol)^{\perp_{\Gr\metric}}$ and the transverse tensors are smooth on all of $\Gr M$. Numerical Ricci bounds are evaluated on unit horizontal vectors: $\ric_{\fol}^{\intercal}\geq\lambda$ means $\ric_{\fol}^{\intercal}(v,v)\geq\lambda$ for $\metric(v,v)=1$. 

\begin{thm}\label{theorem transverse curvature resolution}
On $M_{\reg}^{\Gr}$, the transverse curvature and Ricci tensors satisfy
\begin{equation}\label{equation transverse curvature pullback}
R_{\Gr\fol}^{\intercal}|_{M_{\reg}^{\Gr}}=\rho_{\reg}^*R_{\fol}^{\intercal}\quad \text{and}\quad\ric_{\Gr\fol}^{\intercal}|_{M_{\reg}^{\Gr}}=\rho_{\reg}^*\ric_{\fol}^{\intercal}.
\end{equation}
If $q\geq2$, then for every $x^{\Gr}\in M_{\reg}^{\Gr}$ and every two-plane $\sigma^{\Gr}\subset H^{\Gr}_{x^{\Gr}}$,
\begin{equation}\label{equation transverse sectional equality}
\sec_{\Gr\fol}(x^{\Gr},\sigma^{\Gr})=\sec_{\fol}\bigl(\rho(x^{\Gr}),\dif\rho_{x^{\Gr}}(\sigma^{\Gr})\bigr).
\end{equation}
\end{thm}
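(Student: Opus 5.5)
The proof is local, so we reduce to simple open sets. Fix $x^{\Gr}\in M_{\reg}^{\Gr}$ and put $x=\rho(x^{\Gr})\in M_{\reg}$. Since $\rho_{\reg}$ is a foliated diffeomorphism, we can choose a simple open set $W\subset M_{\reg}$ around $x$ with defining Riemannian submersion $\pi\colon(W,\metric)\to(B,\mathrm{h}_B)$. Then $W^{\Gr}:=\rho_{\reg}^{-1}(W)$ is a simple open set for $\Gr\fol$, and $\pi^{\Gr}:=\pi\circ\rho_{\reg}|_{W^{\Gr}}$ is a submersion whose fibers are exactly the plaques of $\Gr\fol|_{W^{\Gr}}$. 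This holds because $\rho_{\reg}$ maps leaves to leaves bijectively, and connectedness of the fibers is preserved under the diffeomorphism.

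The key step is to show that $\pi^{\Gr}$ is a Riemannian submersion onto the same quotient $(B,\mathrm{h}_B)$. By \eqref{equation transverse metrics} and the fact that $\metric^\intercal|_W=\pi^*\mathrm{h}_B$, we have
\[(\Gr\metric)^\intercal|_{W^{\Gr}}=\rho_{\reg}^*\metric^\intercal_{\reg}=\rho_{\reg}^*\pi^*\mathrm{h}_B=(\pi^{\Gr})^*\mathrm{h}_B.\]
By the uniqueness of the local quotient metric recalled in Section~\ref{section background}, $\mathrm{h}_B$ is therefore the quotient metric for $\Gr\fol$ on $W^{\Gr}$. The definition of the transverse tensors through local quotients then gives
\[R^\intercal_{\Gr\fol}|_{W^{\Gr}}=(\pi^{\Gr})^*R^{\mathrm{h}_B}=\rho_{\reg}^*\pi^*R^{\mathrm{h}_B}=\rho_{\reg}^*R^\intercal_{\fol}|_W,\]
and the same computation works for $\ric$. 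Since the tensors are independent of the chosen local quotient, these local identities glue to \eqref{equation transverse curvature pullback} on all of $M_{\reg}^{\Gr}$.

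For the sectional curvature identity \eqref{equation transverse sectional equality}, let $\sigma^{\Gr}=\spannn\{u,v\}\subset H^{\Gr}_{x^{\Gr}}$. By \eqref{equation transverse curvature pullback}, the numerator satisfies $R^\intercal_{\Gr\fol}(u,v,v,u)=R^\intercal_\fol(\dif\rho\,u,\dif\rho\,v,\dif\rho\,v,\dif\rho\,u)$. The vectors $\dif\rho\,u$ and $\dif\rho\,v$ need not be horizontal, so we replace them by their horizontal projections $(\dif\rho\,u)^\perp$ and $(\dif\rho\,v)^\perp$. This does not change the numerator, because $R^\intercal_\fol$ vanishes on leaf-tangent arguments. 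For the denominator, \eqref{equation transverse metrics} gives $\Gr\metric(u,v)=(\Gr\metric)^\intercal(u,v)=\metric\bigl((\dif\rho\,u)^\perp,(\dif\rho\,v)^\perp\bigr)$ for horizontal $u,v$, so the Gram determinants coincide.

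The two projected vectors are linearly independent, since the Gram determinant is nonzero. Hence they span a horizontal two-plane at $x$, which is the horizontal projection of $\dif\rho_{x^{\Gr}}(\sigma^{\Gr})$. In fact, by \cite[Sections~4.4--4.5]{lytchak}, $\dif\rho$ maps horizontal vectors to horizontal vectors, so this projection equals $\dif\rho_{x^{\Gr}}(\sigma^{\Gr})$ itself. The main point requiring care is this identification of horizontal spaces, together with checking that $\rho_{\reg}$ carries plaques to plaques; both follow from $\rho_{\reg}$ being a foliated diffeomorphism and a transverse isometry.
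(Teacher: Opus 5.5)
Your proposal is correct and follows essentially the same route as the paper. You compose a defining Riemannian submersion on a simple open set in $M_{\reg}$ with $\rho_{\reg}$, check that $\pi\circ\rho_{\reg}$ is a defining Riemannian submersion onto the same $(B,\mathrm{h}_B)$, and pull back the curvature tensors. The only cosmetic difference is that you get the Riemannian submersion property from \eqref{equation transverse metrics} and uniqueness of the quotient metric, whereas the paper reads it off Lytchak's horizontal isometry. You still need that isometry, and correctly invoke it, to know that $\dif\rho_{x^{\Gr}}(\sigma^{\Gr})$ is itself horizontal in \eqref{equation transverse sectional equality}.
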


\begin{proof}
The tensors on the left of \eqref{equation transverse curvature pullback} are the unique smooth extensions of the displayed pullbacks to $\Gr M$. Choose a simple open set $U\subset M_{\reg}$ with defining Riemannian submersion $\pi\colon(U,\metric)\to(B,\mathrm{h}_B)$. Lytchak's metric makes $\dif\rho$ an isometry from the resolved horizontal space onto the recorded infinitesimal section \cite[Sections~4.4--4.5]{lytchak}. Over the regular locus, this section is $H_{\reg}$, so $\pi^{\Gr}=\pi\circ\rho_{\reg}$ is a defining Riemannian submersion on $\rho^{-1}(U)$ onto the same $(B,\mathrm{h}_B)$. Pulling back its curvature and Ricci tensors proves \eqref{equation transverse curvature pullback}; the horizontal isometry also proves \eqref{equation transverse sectional equality}. Smoothness of the resolved tensors and density of $M_{\reg}^{\Gr}$ give its uniqueness.
\end{proof}

At a singular point $x^{\Gr}=(x,S)$, the isometry $\dif\rho|_{H^{\Gr}_{x^{\Gr}}}\colon H^{\Gr}_{x^{\Gr}}\to S$ transports the resolved tensor to the $q$-dimensional infinitesimal section $S$. Its values are limits of regular transverse curvatures along sequences converging to $(x,S)$. Notice that the section data are necessary: the full normal space at $x$ can have dimension greater than $q$, and this tensor is not the transverse curvature of the foliation restricted to a singular stratum.

\begin{cor}\label{corollary transverse curvature bounds resolution}
Let $a,b,c,d\in\mathbb R$. If $q\geq2$, then
\[a\leq\sec_{\fol}\leq b\quad\text{on }M_{\reg}\quad\Longleftrightarrow\quad a\leq\sec_{\Gr\fol}\leq b\quad\text{on }\Gr M.\]
For every positive codimension, one also has
\[c\leq\ric_{\fol}^{\intercal}\leq d\quad \text{on }M_{\reg}\quad\Longleftrightarrow\quad c\leq\ric_{\Gr\fol}^{\intercal}\leq d \quad\text{on }\Gr M.\]
\end{cor}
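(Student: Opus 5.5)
The plan is to derive both equivalences directly from Theorem~\ref{theorem transverse curvature resolution}, using only the density of $M_{\reg}^{\Gr}$ in $\Gr M$ and the smoothness of the resolved transverse tensors on all of $\Gr M$. I would handle the sectional and the Ricci statements in parallel. Each has a trivial implication, from $\Gr M$ down to $M_{\reg}$, and a harder implication, from $M_{\reg}$ up to $\Gr M$, which needs a limiting argument.

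First, the implication from $\Gr M$ to $M_{\reg}$. Let $x\in M_{\reg}$ and let $\sigma\subset H_x$ be a horizontal two-plane. Put $x^{\Gr}=\rho_{\reg}^{-1}(x)$. Since $\dif\rho_{x^{\Gr}}$ restricts to an isometry of $H^{\Gr}_{x^{\Gr}}$ onto $H_x$, there is a unique two-plane $\sigma^{\Gr}$ with $\dif\rho(\sigma^{\Gr})=\sigma$. Then \eqref{equation transverse sectional equality} gives $\sec_{\fol}(x,\sigma)=\sec_{\Gr\fol}(x^{\Gr},\sigma^{\Gr})\in[a,b]$. The Ricci case is the same: a unit horizontal vector $v$ lifts to a unit horizontal vector, and \eqref{equation transverse curvature pullback} applies.

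Second, the implication from $M_{\reg}$ to $\Gr M$. The argument just given, read backwards, shows that the bounds hold on $M_{\reg}^{\Gr}$. To reach the exceptional points, I would consider the Grassmann bundle $\Gr_2(H^{\Gr})\to\Gr M$ of horizontal two-planes, which is a smooth fiber bundle because $H^{\Gr}$ is a smooth subbundle ($\Gr\fol$ is regular). On it, the function $(z,\sigma)\mapsto\sec_{\Gr\fol}(z,\sigma)$ is continuous, being a ratio of smooth expressions in $R^{\intercal}_{\Gr\fol}$ and $\Gr\metric$ whose denominator is nonzero on genuine two-planes. The preimage of $M_{\reg}^{\Gr}$ is dense in this bundle, since bundle projections are open and $M_{\reg}^{\Gr}$ is dense. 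A continuous function bounded by $a$ and $b$ on a dense subset is bounded by them everywhere.

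For the Ricci bounds, I would use the unit sphere bundle of $H^{\Gr}$ and the continuous function $v\mapsto\ric^{\intercal}_{\Gr\fol}(v,v)$ in the same way. This case needs no assumption on $q$ beyond positive codimension, which is needed only so that unit horizontal vectors exist.

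The only real subtlety is to justify the density step, namely that density of $M_{\reg}^{\Gr}$ in the base passes to density of its preimage in the plane bundle or sphere bundle. This follows from local triviality of these bundles, or from the openness of their projections. One must also make sure that the tensors used are the smooth extensions, rather than expressions defined only on $M_{\reg}^{\Gr}$, and this is exactly what Theorem~\ref{theorem transverse curvature resolution} supplies.
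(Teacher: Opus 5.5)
Your proposal is correct and follows the paper's own argument: transfer the bounds over $M_{\reg}^{\Gr}$ using Theorem~\ref{theorem transverse curvature resolution}, extend them to the exceptional points by continuity of the resolved sectional and Ricci functions together with density, and obtain the converse by restriction. You also spell out the density step in the bundles of horizontal two-planes and unit horizontal vectors, which the paper leaves implicit.
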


\begin{proof}
Theorem~\ref{theorem transverse curvature resolution} gives the inequalities over $M_{\reg}^{\Gr}$. Continuity of sectional curvature and of $v\mapsto\ric_{\Gr\fol}^{\intercal}(v,v)$ extends the bounds everywhere. The converses are clear by restriction.
\end{proof}

A strict bound with a uniform margin therefore remains strict upstairs. Constant sectional curvature and the transverse Einstein identity $\ric_{\fol}^{\intercal}=\lambda\metric_{\reg}^{\intercal}$ also persist through the resolution.

\section{Applications to infinitesimally polar singular Killing foliations}
\label{section applications}

Throughout this section, $\fol$ is an infinitesimally polar singular Killing foliation on $M$, with structural algebra $\mathfrak a$, codimension $q$ and defect $d=\dim\mathfrak a$, except where explicitly stated otherwise. Curvature bounds refer to $M_{\reg}$, and sectional curvature statements assume $q\geq2$.

We apply the regular results of \cite{positivecaramellotoben,equivariantcaramellotoben,spherecaramelloneubauer} to the geometric resolution, using the comparisons of Section~\ref{section cohomology curvature} and Corollary~\ref{corollary killing resolution}. The quotient and covering facts needed below were recalled in Section~\ref{section background}.

\begin{thm}\label{theorem closed orbifold models applications}
Suppose that $M$ is compact. There is a compact $q$-dimensional
Riemannian orbifold $\mathcal O$ with an effective isometric
$\mathbb{T}^d$-action such that
\[
M/\overline{\fol}\cong|\mathcal O|/\mathbb{T}^d,
\qquad
\chi(\fol)=\chi(|\mathcal O|).
\]
Here $|\mathcal O|$ is the underlying topological space, and the
first identification is a homeomorphism. There is an isomorphism
of graded $\RR$-algebras
\[
\Ho_{\mathfrak{a}}(\fol)\cong\Ho_{\mathbb{T}^d}(\mathcal O).
\]
If $\actionarrow{\mathfrak{a}}{\fol}$ is equivariantly formal, then $\actionarrow{\mathbb{T}^d}{\mathcal O}$ is equivariantly formal, and
\[
b_j(\fol)=b_j(\mathcal O)=b_j(|\mathcal O|)
\quad\text{for every }j.
\]
\end{thm}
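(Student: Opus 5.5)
The plan is to reduce everything to the regular case through the geometric resolution. Corollary~\ref{corollary killing resolution} shows that $\Gr\fol$ is a regular Killing foliation on the compact manifold $\Gr M$, with the same structural algebra $\mathfrak a$ and with structural action lifted from $\actionarrow{\mathfrak a}{\fol}$. Its codimension is $q$, and its defect is $d$. We then apply the Haefliger--Salem deformation results in the form used in \cite{positivecaramellotoben,equivariantcaramellotoben}. The transverse structure of $\Gr\fol$ can be deformed, inside the closure of the structural action, to a closed Riemannian foliation $\fol'$ of codimension $q$ on $\Gr M$. This yields a compact $q$-dimensional Riemannian orbifold $\mathcal O=\Gr M\Slash\fol'$. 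The orbifold carries an effective isometric action of a torus $\mathbb T^d$, namely the closure of the image of $\mathfrak a$ in the transverse isometries, and $\mathbb T^d$-orbits correspond to closures of $\Gr\fol$-leaves. This gives a homeomorphism $\Gr M/\overline{\Gr\fol}\cong|\mathcal O|/\mathbb T^d$. It also gives $\chi(\Gr\fol)=\chi(|\mathcal O|)$ by the invariance of the basic Euler characteristic \cite{positivecaramellotoben}, and $\Ho_{\mathfrak a}(\Gr\fol)\cong\Ho_{\mathbb T^d}(\mathcal O)$ by \cite[Theorem~5.3 and Remark~5.4]{equivariantcaramellotoben}.

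The second step transports these conclusions back to $\fol$.
\begin{enumerate}[(i)]
\item Lemma~\ref{lemma quotient comparison applications} gives the homeomorphism $M/\overline{\fol}\cong\Gr M/\overline{\Gr\fol}$, so composing with the first step yields $M/\overline{\fol}\cong|\mathcal O|/\mathbb T^d$.
\item Proposition~\ref{proposition basic forms resolution} gives $\Ho(\fol)\cong\Ho(\Gr\fol)$, which is finite dimensional by compactness. Hence $\chi(\fol)=\chi(\Gr\fol)=\chi(|\mathcal O|)$.
\item Proposition~\ref{proposition equivariant cohomology resolution} gives $\Ho_{\mathfrak a}(\fol)\cong\Ho_{\mathfrak a}(\Gr\fol)$, and composing with the previous isomorphism yields $\Ho_{\mathfrak a}(\fol)\cong\Ho_{\mathbb T^d}(\mathcal O)$ as graded $\RR$-algebras.
\end{enumerate}
Here $\Ho_{\mathbb T^d}(\mathcal O)$ is computed by the Cartan model with $S(\mathfrak t^*)$, where $\mathfrak t$ is the Lie algebra of $\mathbb T^d$. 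Since $\dim\mathfrak t=d=\dim\mathfrak a$, the two polynomial rings are isomorphic, but the identification need not respect the module structures canonically. This is why the statement only asserts an isomorphism of $\RR$-algebras.

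For formality, assume that $\actionarrow{\mathfrak a}{\fol}$ is equivariantly formal. By Proposition~\ref{proposition equivariant cohomology resolution}, $\actionarrow{\mathfrak a}{\Gr\fol}$ is then equivariantly formal. The regular result \cite[Theorem~5.3]{equivariantcaramellotoben} then gives formality of $\actionarrow{\mathbb T^d}{\mathcal O}$ together with $b_j(\Gr\fol)=b_j(\mathcal O)$. If the citation does not directly state the transfer of formality, we argue dimensionally instead. Equivariant formality is equivalent to equality of the total dimensions of equivariant cohomology modulo the augmentation ideal and of ordinary cohomology. Equivalently, it is equivalent to the degeneration of the spectral sequence, and both sides can be compared along the deformation, where basic cohomology is constant under equivariant formality \cite[Remark~5.4]{equivariantcaramellotoben}. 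We then conclude $b_j(\fol)=b_j(\Gr\fol)=b_j(\mathcal O)$. Finally, $b_j(\mathcal O)=b_j(|\mathcal O|)$ because the real de Rham cohomology of an orbifold is that of its underlying space, as recalled in Section~\ref{section background}.

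The main obstacle is the first step. We must make sure that the regular deformation theorems apply as stated, which requires compactness of $\Gr M$ (guaranteed by Lytchak's properties) and possibly transverse orientability. If orientability is required, we pass to the finite normal cover described in Section~\ref{section background}. We then have to check that its deck group commutes with the structural action and with the deformation, so that the results descend: $\mathcal O$ is replaced by its quotient by the finite deck group, which remains an orbifold with an induced $\mathbb T^d$-action, and invariants are taken with respect to that group. A further point is that the effective torus has dimension exactly $d$. This holds because closing the $\mathfrak a$-action in the transverse isometry group of the deformed closed foliation yields a torus whose orbits have the dimensions of the $\overline{\Gr\fol}$-leaves modulo leaves, and $\mathfrak a$ acts effectively.
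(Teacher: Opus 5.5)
Your proposal is correct and follows essentially the paper's route: pass to the compact regular Killing foliation $\Gr\fol$ via Corollary~\ref{corollary killing resolution}, deform it to a closed foliation with leaf orbifold $\mathcal O$ using the Caramello--T\"oben deformation \cite[Theorem~B]{positivecaramellotoben}, and transport the quotient, the Euler characteristic and the equivariant cohomology back through Lemma~\ref{lemma quotient comparison applications} and Propositions~\ref{proposition basic forms resolution} and~\ref{proposition equivariant cohomology resolution}. Two minor points: the paper applies the deformation theorem directly to $\Gr\fol$ without an orientation cover, and the transfer of formality and Betti numbers is \cite[Proposition~6.2 and Theorem~6.3]{equivariantcaramellotoben}, which makes your dimensional fallback unnecessary; that fallback is also not justified as written, because the $S(\mathfrak a^*)$-module structure, and hence the quotient by the augmentation ideal, can change along the deformation.
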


\begin{proof}
Properness makes $\Gr M$ compact. Apply
\cite[Theorem~B]{positivecaramellotoben} to its regular Killing
foliation. It gives a deformation $(\Gr\fol)_t$, $0\leq t\leq1$,
with $(\Gr\fol)_0=\Gr\fol$ and a closed endpoint
$(\Gr\fol)_1=\Gr\mathcal{G}$ that can be chosen
arbitrarily close to $\Gr\fol$ (as sections of the respective Grasmannian bundle). The deformation stays inside
the original leaf closures, and the transverse metric deforms
smoothly with it. Set $\mathcal O=\Gr M\Slash\Gr\mathcal{G}$.
The induced structural action at the endpoint integrates to an
effective isometric $\mathbb{T}^d$-action on $\mathcal O$.
The leaf-closure quotient in the cited theorem is $M/\overline{\fol}$ by
Lemma~\ref{lemma quotient comparison applications}.
The equality of Euler characteristics follows from
Proposition~\ref{proposition basic forms resolution},
\cite[Theorem~7.4]{positivecaramellotoben}, and the de Rham theorem for orbifolds, as recalled in \cite[Theorem~2.2 and Proposition~2.3]{equivariantcaramellotoben}.

Proposition~\ref{proposition equivariant cohomology resolution},
together with \cite[Proposition~5.2, Theorem~5.3 and
Corollary~5.5]{equivariantcaramellotoben}, gives the equivariant
isomorphisms through the whole deformation,
\[
\Ho_{\mathfrak{a}}(\fol)
\cong\Ho_{\mathfrak{a}}((\Gr\fol)_t)
\cong\Ho_{\mathbb{T}^d}(\mathcal O).
\]
Here the action on $(\Gr\fol)_t$ is the induced action of the
deformation construction. If the initial action is equivariantly
formal, we can use
\cite[Proposition~6.2 and Theorem~6.3]{equivariantcaramellotoben}
to obtain formality and constancy of the basic Betti numbers.
Finally, the basic cohomology of the closed foliation
$\Gr\mathcal{G}$ is the real cohomology of $\mathcal O$,
which agrees with that of $|\mathcal O|$.
\end{proof}

The isomorphisms through that deformation are isomorphisms of $\RR$-algebras; their $S(\mathfrak{a}^*)$-module structures can change
\cite[Remark~5.4]{equivariantcaramellotoben}. The pullback isomorphism induced by $\rho$ (see Proposition~\ref{proposition equivariant cohomology resolution}), in contrast, preserves this module structure.

\begin{cor}\label{corollary Euler localization applications}
Suppose that $M$ is compact, and let $M^{\mathfrak{a}}\subset M$ be the union of
the closed leaves. Then
\[
\chi(\fol)=\chi(M^{\mathfrak{a}}/\fol).
\]
In particular, $\chi(\fol)\neq0$ implies that $\fol$ has a closed leaf.
\end{cor}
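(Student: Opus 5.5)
Pass to the geometric resolution and use the localization theorem for regular Killing foliations. By Proposition~\ref{proposition basic forms resolution}, $\chi(\fol)=\chi(\Gr\fol)$. By Corollary~\ref{corollary killing resolution}, $\Gr\fol$ is a complete regular Killing foliation on the compact manifold $\Gr M$ with the same structural algebra $\mathfrak a$. Its transverse action is the lift of $\actionarrow{\mathfrak{a}}{\fol}$. For regular Killing foliations on compact manifolds, the basic Euler characteristic localizes to the union of closed leaves:
\[\chi(\Gr\fol)=\chi\bigl((\Gr M)^{\mathfrak a}/\Gr\fol\bigr).\]
This is \cite[Theorem~D]{caramellosingularequivariant} in the regular case; alternatively, it follows from the Euler-characteristic invariance in \cite[Theorem~7.4]{positivecaramellotoben} combined with fixed-point localization for the $\mathbb T^d$-action on the orbifold $\mathcal O$ of Theorem~\ref{theorem closed orbifold models applications}. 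I would cite the regular statement directly. If needed, I would first pass to the transverse orientation cover described in Section~\ref{section background}, where the Deck-invariant identification handles orientability hypotheses.

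Next, transfer the fixed locus back to $M$. Since leaf closures are the $\mathfrak a$-orbits, $(\Gr M)^{\mathfrak a}$ is the union of closed leaves of $\Gr\fol$. By Lemma~\ref{lemma quotient comparison applications}, $\overline{\rho^{-1}(L)}=\rho^{-1}(\overline L)$, so closed leaves correspond. Hence $(\Gr M)^{\mathfrak a}=\rho^{-1}(M^{\mathfrak a})$. The homeomorphism $\Gr M/\Gr\fol\cong M/\fol$ of that lemma restricts to a homeomorphism
\[(\Gr M)^{\mathfrak a}/\Gr\fol\cong M^{\mathfrak a}/\fol,\]
because the restricted map is a continuous bijection of compact Hausdorff spaces. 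The fixed locus is closed in a compact manifold, and its leaf space consists of closed leaves, so it is Hausdorff by the metric quotient structure. Euler characteristics of homeomorphic spaces agree, which gives $\chi(\fol)=\chi(M^{\mathfrak a}/\fol)$.

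The main obstacle is the exact form of the regular localization theorem to cite, and which Euler characteristic it uses for the fixed leaf space. The fixed locus is a union of components of leaf-closure strata of possibly different dimensions. Its quotient $(\Gr M)^{\mathfrak a}\Slash\Gr\fol$ is a disjoint union of compact orbifolds, one for each component. Its Euler characteristic should be that of the underlying space, consistent with the paper's convention that orbifold Euler characteristics refer to underlying spaces. I would check that the regular result is stated in these terms, or convert via orbifold de Rham theory. The final claim is then immediate: if $\chi(\fol)\neq0$, then $M^{\mathfrak a}\neq\emptyset$, so $\fol$ has a closed leaf.
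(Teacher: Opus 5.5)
Your proposal is correct and follows the paper's proof: use Lemma~\ref{lemma quotient comparison applications} to get $(\Gr M)^{\mathfrak a}=\rho^{-1}(M^{\mathfrak a})$ and $(\Gr M)^{\mathfrak a}/\Gr\fol\cong M^{\mathfrak a}/\fol$, apply the regular localization theorem to $\Gr\fol$, and conclude with $\chi(\fol)=\chi(\Gr\fol)$. The only differences are that the paper cites the regular localization as \cite[Theorem~D]{positivecaramellotoben} rather than the regular case of \cite[Theorem~D]{caramellosingularequivariant}, and that it applies this result directly, without an orientation-cover step.
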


\begin{proof}
By Lemma~\ref{lemma quotient comparison applications},
$(\Gr M)^{\mathfrak{a}}=\rho^{-1}(M^{\mathfrak{a}})$ is the union of the closed leaves of
$\Gr\fol$ and
$(\Gr M)^{\mathfrak{a}}/\Gr\fol\cong M^{\mathfrak{a}}/\fol$. We can hence apply \cite[Theorem~D]{positivecaramellotoben} upstairs and use $\chi(\fol)=\chi(\Gr\fol)$.
\end{proof}

We next turn to positive curvature, beginning with two Euler characteristic observations for regular foliations. We first record a low-codimension consequence. A regular Riemannian foliation $\fol$ of codimension $q\in\{2,4\}$ on a compact connected manifold satisfies $\chi(\fol)>0$ whenever $\ric_{\fol}^{\intercal}>0$. Indeed, transverse Bonnet--Myers \cite[Theorem~2.6]{positivecaramellotoben} gives $\Ho^1(\fol)=0$. For $q=4$, apply this also to the transverse orientation cover. Basic Poincar\'e duality there \cite[Lemma~8.3]{positivecaramellotoben} gives vanishing in degree three; the finite-cover injection gives the same vanishing downstairs. Hence, in either codimension, $\chi(\fol)=\sum_j b_{2j}(\fol)\geq1$.

We also recall \cite[Lemma~8.8]{positivecaramellotoben}, which establishes that a closed, regular, transversely orientable Riemannian foliation $\fol$ of even codimension $q\geq2$ on a compact connected manifold $M$ satisfying $\sec_{\fol}>0$ and $\symrank(\fol)\geq q/4-1$ has $\chi(\fol)>0$. Here we observe that this also holds without transverse orientability. Indeed, its inductive proof uses this hypothesis only in the codimension-four base case, supplied here by the preceding low-codimension observation. To obtain positivity on the ambient manifold, when $q\geq6$ the symmetry-rank hypothesis guarantees a nonzero transverse Killing field. Its flow lifts to the transverse orientation cover, whose compact leaf orbifold is oriented and positively curved. The orbifold Synge--Weinstein theorem, applied to an element generating a dense subgroup of the closure of this
flow, gives a zero of the lifted field and hence of the original field. Euler localization \cite[Theorem~7.1]{positivecaramellotoben}, together with the positivity of the zero-set components established by the induction, then gives $\chi(\fol)>0$.

\begin{thm}\label{theorem positive curvature applications}
Suppose that $M$ is compact and
$\sec_{\fol}\geq\kappa>0$ on $M_{\reg}$.
\begin{enumerate}[(i)]
\item The defect satisfies
\[
d\leq\left\lfloor\frac{q+1}{2}\right\rfloor.
\]
\item If $q$ is even, then $\fol$ has a closed leaf.
If, moreover, $d\geq q/4-1$, then $\chi(\fol)>0$.
\item If $\actionarrow{\mathfrak{a}}{\fol}$ is equivariantly formal, there is
a constant $B(q)$, depending only on $q$, such that
\[
\sum_{j=0}^q b_j(\fol)\leq B(q).
\]
\item If equality holds in \textup{(i)}, then
\[
M/\overline{\fol}\cong\mathbb{D}^{q-d},
\]
where $\mathbb{D}^m$ denotes the closed $m$-dimensional ball. 
\end{enumerate}
\end{thm}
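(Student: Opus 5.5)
The plan is to transfer everything to the geometric resolution and then invoke the regular Killing results of \cite{positivecaramellotoben,equivariantcaramellotoben}. Since $M$ is compact, $\Gr M$ is compact and, by Corollary~\ref{corollary killing resolution}, $\Gr\fol$ is a regular Killing foliation with the same structural algebra $\mathfrak a$, hence the same defect $d$. By Corollary~\ref{corollary transverse curvature bounds resolution}, $\sec_{\Gr\fol}\geq\kappa>0$ holds on all of $\Gr M$. Lemma~\ref{lemma quotient comparison applications} identifies the closed leaves and the leaf-closure spaces. Proposition~\ref{proposition basic forms resolution} identifies basic Betti numbers and $\chi$. Proposition~\ref{proposition equivariant cohomology resolution} transfers equivariant formality. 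Each item then reduces to its regular counterpart upstairs.

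For (i), I would use the closed approximation of Theorem~\ref{theorem closed orbifold models applications}. There, $\mathcal O=\Gr M\Slash\Gr\mathcal G$ is a compact $q$-dimensional orbifold with an effective isometric $\mathbb T^d$-action. Because the deformation of \cite[Theorem~B]{positivecaramellotoben} can be taken arbitrarily close, with the transverse metric varying smoothly, the strict bound $\sec\geq\kappa$ survives with a slightly smaller constant, so $\mathcal O$ is positively curved. The orbifold Grove--Searle symmetry rank bound then gives $d\leq\lfloor (q+1)/2\rfloor$. This is presumably the regular statement already available in \cite{positivecaramellotoben}, which one can cite directly for $\Gr\fol$.

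For (ii), with $q$ even, I would first pass to the transverse orientation cover if needed, so that the leaf orbifold of the approximating closed foliation is oriented. By the orbifold Synge--Weinstein argument recalled above, applied to a topological generator of $\mathbb T^d$, the structural fields have a common zero, that is, a closed leaf of $\Gr\fol$. This is the step of \cite{positivecaramellotoben} on closed leaves in even codimension. I would descend the closed leaf to $M$ via Lemma~\ref{lemma quotient comparison applications}. If $d\geq q/4-1$, then $\symrank(\Gr\mathcal G)\geq d\geq q/4-1$. The non-orientable version of \cite[Lemma~8.8]{positivecaramellotoben} established just before the statement then gives $\chi(\mathcal O)>0$, and hence $\chi(\fol)=\chi(|\mathcal O|)>0$ by Theorem~\ref{theorem closed orbifold models applications}.

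For (iii), equivariant formality gives $b_j(\fol)=b_j(|\mathcal O|)$, with $\mathcal O$ positively curved of dimension $q$. The bound then follows from a Gromov-type Betti bound for positively curved orbifolds, or from the corresponding regular statement in \cite{positivecaramellotoben}.

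The main obstacle is (iv). In the equality case, I would apply the maximal symmetry rank rigidity for positively curved orbifolds (the orbifold Grove--Searle classification) to $\mathcal O$. This shows that $|\mathcal O|/\mathbb T^d$ is homeomorphic to a closed ball of dimension $q-d$, and Theorem~\ref{theorem closed orbifold models applications} then gives $M/\overline\fol\cong\mathbb D^{q-d}$. The delicate points are twofold. First, one must check that the Grove--Searle ball conclusion for the orbit space holds for orbifolds and not just manifolds; I would try to cite the regular version in \cite{positivecaramellotoben} for $\Gr\fol$ directly. Second, one must ensure that the approximation preserves both the curvature bound and the effective $\mathbb T^d$-action of full rank.
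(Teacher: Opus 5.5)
Your proposal follows essentially the same route as the paper. You transfer to the compact regular Killing resolution, passing to the transverse orientation cover where needed, and use a sufficiently close positively curved closed approximation. Then you invoke \cite[Theorem~A]{positivecaramellotoben} for (i), the orbifold Synge--Weinstein argument and the non-orientable version of \cite[Lemma~8.8]{positivecaramellotoben} for (ii), and \cite[Theorem~6.5]{equivariantcaramellotoben} for (iii). The points you leave open in (iv) are settled in the paper by applying Harvey--Searle's maximal symmetry rank theorem \cite[Theorem~D]{harveysearle} to the Alexandrov space $|\mathcal O|$. That theorem allows boundary and identifies the torus action equivariantly, and the paper then checks by hand that the $\mathbb{T}^d$-quotient of each model, $\mathbb{S}^q/\Lambda$ and, for even $q$, $\mathbb{S}^{q+1}/H$, is a closed $(q-d)$-ball.
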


\begin{proof}
Corollary~\ref{corollary transverse curvature bounds resolution} gives $\sec_{\Gr\fol}\geq\kappa$. If $\Gr\fol$ is not transversely orientable, pass to the transversely oriented $\widehat{\Gr\fol}$ on a connected orientation double cover. As we saw in Section~\ref{section background} $\widehat{\Gr\fol}$ has the same structural algebra as $\Gr\fol$. Applying \cite[Theorem~A]{positivecaramellotoben} proves \textup{(i)}.

For the closed-leaf assertion in \textup{(ii)}, deform $\widehat{\Gr\fol}$ to a closed foliation with positive
transverse curvature. Its quotient is an even-dimensional, compact, oriented, positively curved orbifold with an isometric $\mathbb{T}^d$-action. If $d>0$, choose a torus element whose powers are dense in $\mathbb{T}^d$. The orbifold Synge--Weinstein theorem, in the form recalled in \cite[Section~5]{positivecaramellotoben}, gives a fixed point of this orientation-preserving isometry, hence a fixed point of the whole torus. Such points correspond to closed leaves of the initial Killing foliation, as in \cite[Section~5 and proof of Theorem~7.4]{positivecaramellotoben}. If $d=0$, that foliation is already closed. Projection of the finite cover gives a closed leaf on the resolution, and Lemma~\ref{lemma quotient comparison applications} then gives a closed leaf on $M$.

For the Euler assertion, take a sufficiently close closed approximation $\Gr\mathcal{G}$ of $\Gr\fol$ with positive transverse sectional curvature and symmetry rank at least $d$. By \cite[Lemma~8.8]{positivecaramellotoben}, with transverse orientability omitted as explained above, we obtain $\chi(\Gr\mathcal{G})>0$ when $q$ is even and $d\geq q/4-1$. Invariance of the basic Euler characteristic under this deformation \cite[Theorem~7.4]{positivecaramellotoben}, followed by Proposition~\ref{proposition basic forms resolution}, gives $\chi(\fol)>0$.

Proposition~\ref{proposition equivariant cohomology resolution} and \cite[Theorem~6.5]{equivariantcaramellotoben} prove \textup{(iii)}. For \textup{(iv)}, again choose a closed approximation of $\Gr\fol$ with positive transverse curvature, and denote its $q$-dimensional quotient orbifold by $\mathcal O$. It has an effective isometric $\mathbb{T}^d$-action and
\[M/\overline{\fol}\cong|\mathcal O|/\mathbb{T}^d.\]
Apply Harvey--Searle's maximal symmetry rank theorem \cite[Theorem~D]{harveysearle} to $|\mathcal O|$, rescaling to curvature at least $1$. The theorem includes Alexandrov spaces with boundary and identifies their torus actions equivariantly. Its two models are $\mathbb{S}^q/\Lambda$, with $\Lambda$ finite in the centralizer of a maximal torus of $\operatorname{O}(q+1)$, or, for even $q$, $\mathbb{S}^{q+1}/H$, with $H$ a rank-one subgroup of a maximal torus of $\operatorname{O}(q+2)$. The acting torus is induced by the standard maximal torus, up to a finite covering or kernel, which does not change its orbits.

If $q=2d-1$, the centralizer is $\mathbb{T}^d$ and the final torus quotient is
\[\mathbb{S}^{2d-1}/\mathbb{T}^d\cong\Delta^{d-1}\cong\mathbb{D}^{d-1}.\]
The first identification records the squared moduli of the $d$ complex coordinates, whose sum is $1$. If $q=2d$, write the sphere in $\mathbb{C}^d\oplus\mathbb{R}$. Its torus quotient is the suspension of $\Delta^{d-1}$, a closed $d$-ball. The centralizer is $\mathbb{T}^d\times\{\pm1\}$; after quotienting by the torus, $\Lambda$ acts either trivially or by exchanging the suspension tips. The latter quotient is a cone on $\Delta^{d-1}$, again a closed $d$-ball. In the rank-one model the final quotient is
\[\mathbb{S}^{2d+1}/\mathbb{T}^{d+1}\cong\Delta^d\cong\mathbb{D}^d.\]
These cases give \textup{(iv)}.
\end{proof}

In the even-dimensional rank-one model, a locally free circle action gives the usual finite quotient of a weighted projective space \cite[Corollary~E]{harveysearle}. Circle actions with fixed points also occur in the Alexandrov classification, including mirror orbifold models. The intrinsic conclusion in \textup{(iv)} is obtained after taking the whole torus quotient, which is a ball in all cases.

\begin{cor}\label{corollary maximal even defect applications}
If $M$ is compact and $q=2d$, then $\fol$ has only finitely many closed leaves and $\chi(\fol)$ is their number. Under the positive curvature hypothesis of Theorem~\ref{theorem positive curvature applications}, this number is positive.
\end{cor}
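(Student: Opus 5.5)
We reduce to the localization formula of Corollary~\ref{corollary Euler localization applications}, $\chi(\fol)=\chi(M^{\mathfrak a}/\fol)$, together with finiteness of the closed-leaf locus. The key claim is that when $q=2d$ every closed leaf is isolated in the leaf space. Then $M^{\mathfrak a}/\fol$ is a finite set of points, by compactness, and its Euler characteristic counts them.

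To prove isolation, we work on the resolution. By Lemma~\ref{lemma quotient comparison applications} and Corollary~\ref{corollary killing resolution}, closed leaves of $\fol$ correspond bijectively to closed leaves of $\Gr\fol$, that is, to zeros of the transverse $\mathfrak a$-action upstairs. Moreover $(\Gr M)^{\mathfrak a}/\Gr\fol\cong M^{\mathfrak a}/\fol$. So it suffices to treat the regular Killing foliation $\Gr\fol$ on the compact manifold $\Gr M$.

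Near a closed leaf $L$, take a local quotient orbifold chart $(T,\Gamma)$ at a point of $L$, of dimension $q$. The structural fields are $\Gamma$-invariant Killing fields on $T$ vanishing at the center $o$. They generate an effective abelian Lie algebra $\mathfrak a$ of dimension $d$ acting linearly on $T_oT\cong\RR^{2d}$. This action is infinitesimally effective, because a Killing field is determined by its value and first derivative at a point (\cite[Proposition~8.1.4]{petersen}) and all values vanish at $o$. An abelian subalgebra of $\mathfrak{so}(2d)$ of dimension $d$ is a maximal torus algebra. Its common zero set in $\RR^{2d}$ is $\{0\}$, since a maximal torus of $\operatorname{SO}(2d)$ acts with only the origin fixed. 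By the exponential map, $o$ is then an isolated zero of the $\mathfrak a$-action in $T$, and hence $L$ is isolated among the closed leaves. Properness and compactness then give finitely many closed leaves, and the count equals $\chi(M^{\mathfrak a}/\fol)=\chi(\fol)$.

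The point requiring care is effectiveness of the linearized isotropy representation: we must ensure the $d$ Molino fields remain linearly independent as Killing fields on the local chart rather than just as germs on the regular part. This follows from local constancy of $\mathscr C_{\Gr\fol}$ and the unique-continuation property of Killing fields on the connected chart $T$.

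For the positivity statement, note that the positive curvature hypothesis with $q=2d$ even and $d\geq q/4-1$ puts us in the setting of Theorem~\ref{theorem positive curvature applications}(ii), which gives $\chi(\fol)>0$. Alternatively, (ii) already provides a closed leaf, so the count is at least one.
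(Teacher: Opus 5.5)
Your proof is correct and shares the paper's skeleton. Both arguments transfer the problem to the regular Killing foliation $\Gr\fol$ via Lemma~\ref{lemma quotient comparison applications}, prove that $\Gr\fol$ has finitely many closed leaves, identify their number with $\chi(\fol)$ through Corollary~\ref{corollary Euler localization applications}, and get positivity from Theorem~\ref{theorem positive curvature applications}(ii). You are right that $d=q/2\geq q/4-1$ holds automatically; the paper uses your alternative, the closed-leaf assertion.

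The difference is the finiteness step. The paper applies \cite[Proposition~6.2]{positivecaramellotoben} to $\Gr\fol$. You instead prove it directly, in four moves:
\begin{enumerate}
\item Project the structural fields to Killing fields on a local quotient; they vanish at the image $o$ of a closed leaf.
\item Use $1$-jet determination \cite[Proposition~8.1.4]{petersen} to embed $\mathfrak a$ into $\mathfrak{so}(2d)$.
\item A $d$-dimensional abelian subalgebra of $\mathfrak{so}(2d)$ is a Cartan subalgebra, so its common kernel on $\RR^{2d}$ is trivial.
\item The exponential map then shows that $o$ is an isolated common zero.
\end{enumerate}
Your route is self-contained and purely local, relying only on the localization formula already established; the paper's route is a one-line citation.

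Two small points should be stated explicitly to close your argument. First, the linearization $A\mapsto(\nabla Y_A)_o$ is a Lie algebra (anti)homomorphism on Killing fields vanishing at $o$, so its image is abelian because the structural fields commute. Second, $(\Gr M)^{\mathfrak a}$ is closed, being the common zero set of the normal components of the structural fields. Your isolation step makes each closed leaf relatively open in it, so compactness of $\Gr M$ leaves only finitely many.

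Since $\Gr\fol$ is regular, a simple open set with manifold quotient suffices, and no chart group $\Gamma$ is needed.
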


\begin{proof}
Apply \cite[Proposition~6.2]{positivecaramellotoben} to $\Gr\fol$ and use the correspondence of closed leaves in Lemma~\ref{lemma quotient comparison applications}. The quotient $M^{\mathfrak a}/\fol$ is then a finite set, and Corollary~\ref{corollary Euler localization applications} identifies its cardinality with $\chi(\fol)$. Under positive curvature, the even-codimension closed-leaf assertion gives positivity.
\end{proof}

The Ricci curvature comparisons give the following consequences.

\begin{cor}\label{corollary positive Ricci applications}
Let $\fol$ be a complete infinitesimally polar singular Riemannian foliation with $\ric_{\fol}^{\intercal}\geq\lambda$ on $M_{\reg}$, where $\lambda>0$. Then $M/\overline{\fol}$ is compact and $\Ho^1(\fol)=0$. If $M$ is compact and $q\in\{2,4\}$, then also $\chi(\fol)>0$. If $\fol$ is additionally Killing and $\chi(\fol)>0$, it has a closed leaf.
\end{cor}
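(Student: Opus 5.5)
The plan is to transfer everything to the geometric resolution $\rho\colon\Gr M\to M$ and apply the regular results there. By Corollary~\ref{corollary transverse curvature bounds resolution}, $\ric_{\fol}^{\intercal}\geq\lambda$ on $M_{\reg}$ implies $\ric_{\Gr\fol}^{\intercal}\geq\lambda$ on all of $\Gr M$. Also, $\Gr\metric$ is complete because $\metric$ is, so $\Gr\fol$ is a complete regular Riemannian foliation with transverse Ricci curvature uniformly bounded below by $\lambda>0$.

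\textbf{Compactness and vanishing of $\Ho^1$.} First apply the transverse Bonnet--Myers theorem \cite[Theorem~2.6]{positivecaramellotoben} to $\Gr\fol$. It bounds the transverse diameter of $\Gr\fol$ by $\pi\sqrt{(q-1)/\lambda}$, so $\Gr M/\overline{\Gr\fol}$ is a bounded, complete length space. The main obstacle is that the cited theorem may be stated only for compact $M$, whereas here only completeness is assumed.
\begin{itemize}
\item In the complete case, the leaf-closure space is still locally compact: it is locally an orbifold modulo the isometric action of a compact local closure of the Molino flows.
\item A complete, locally compact length space of bounded diameter is compact by Hopf--Rinow.
\end{itemize}
If \cite[Theorem~2.6]{positivecaramellotoben} does not already cover the complete case, I would run Myers' argument directly: use transverse geodesics realizing the quotient distance in local quotients, and the second variation of transverse length. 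The isometry of Lemma~\ref{lemma quotient comparison applications} then gives compactness of $M/\overline{\fol}$. The same Bonnet--Myers statement, via a basic Bochner argument, gives $\Ho^1(\Gr\fol)=0$, and Proposition~\ref{proposition basic forms resolution} transfers this to $\Ho^1(\fol)=0$.

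\textbf{Euler characteristic for $q\in\{2,4\}$.} Now let $M$ be compact, so $\Gr M$ is compact by properness, and apply the low-codimension observation preceding Theorem~\ref{theorem positive curvature applications} to $\Gr\fol$.
\begin{itemize}
\item For $q=2$, the vanishing $b_1(\Gr\fol)=0$, together with $b_0=1$ and $b_j=0$ for $j>q$, gives $\chi(\Gr\fol)=1+b_2\geq1$.
\item For $q=4$, pass to the transverse orientation cover $\widehat{\Gr\fol}$, which has the same Ricci bound. There, Bonnet--Myers gives $b_1=0$ and basic Poincar\'e duality \cite[Lemma~8.3]{positivecaramellotoben} gives $b_3=0$. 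The finite-cover injection $\Ho(\Gr\fol)\hookrightarrow\Ho(\widehat{\Gr\fol})$ then yields $b_3(\Gr\fol)=0$, so $\chi(\Gr\fol)=1+b_2+b_4\geq1$.
\end{itemize}
In both cases, $\chi(\fol)=\chi(\Gr\fol)>0$ follows from Proposition~\ref{proposition basic forms resolution}.

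\textbf{Closed leaf.} If $\fol$ is moreover Killing with $\chi(\fol)>0$, the localization formula of Corollary~\ref{corollary Euler localization applications} gives $\chi(M^{\mathfrak a}/\fol)=\chi(\fol)\neq0$. Hence $M^{\mathfrak a}$ is nonempty, and $\fol$ has a closed leaf.
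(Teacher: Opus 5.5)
Your proposal is correct and follows the paper's proof essentially step for step. You transfer the Ricci bound to the complete resolution, apply the transverse Bonnet--Myers theorem \cite[Theorem~2.6]{positivecaramellotoben} there, pull the conclusions back via Lemma~\ref{lemma quotient comparison applications} and Proposition~\ref{proposition basic forms resolution}, run the low-codimension Euler argument on $\Gr\fol$, and finish with Corollary~\ref{corollary Euler localization applications}. The cited Bonnet--Myers theorem already gives both compactness of the closure space and $\Ho^1=0$ for complete foliations, so your Hopf--Rinow fallback is unnecessary. As in the paper, the final step tacitly assumes $M$ compact, since that is where the localization formula is stated.
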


\begin{proof}
The same Ricci bound holds on the complete resolution. The transverse Bonnet--Myers theorem \cite[Theorem~2.6]{positivecaramellotoben} gives compactness of its closure space and vanishing of its first basic cohomology. Transfer both conclusions by Lemma~\ref{lemma quotient comparison applications} and Proposition~\ref{proposition basic forms resolution}. If $M$ is compact and $q\in\{2,4\}$, apply the low-codimension Euler argument above to the compact regular foliation $\Gr\fol$. Preservation of basic cohomology gives $\chi(\fol)=\chi(\Gr\fol)>0$. If $\fol$ is Killing, Corollary~\ref{corollary Euler localization applications} supplies a closed leaf.
\end{proof}

We now generalize \cite[Theorem~4.3]{equivariantcaramellotoben} to the singular setting.

\begin{thm}[Nonpositive Ricci curvature and closedness]
\label{theorem negative curvature applications}
Suppose that $M$ is compact. If $\ric_{\fol}^{\intercal}\leq0$ on $M_{\reg}$ and $\ric_{\fol}^{\intercal}$ is negative definite at some point of $M_{\reg}$, then $\fol$ is closed. In particular, this holds under a uniform negative upper bound $\ric_{\fol}^{\intercal}\leq-\lambda$, with $\lambda>0$.
\end{thm}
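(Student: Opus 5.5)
The plan is to transfer the statement to the geometric resolution and apply the regular result \cite[Theorem~4.3]{equivariantcaramellotoben} there, or, if that theorem needs a uniform negative bound, to reprove it upstairs with a Bochner argument on structural Killing fields.

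First, properness makes $\Gr M$ compact, and Corollary~\ref{corollary killing resolution} makes $\Gr\fol$ a regular Killing foliation with the same structural algebra $\mathfrak a$. Corollary~\ref{corollary transverse curvature bounds resolution}, applied with only an upper bound, gives $\ric_{\Gr\fol}^{\intercal}\leq0$ on all of $\Gr M$. By Theorem~\ref{theorem transverse curvature resolution}, $\ric_{\Gr\fol}^{\intercal}$ is negative definite at the point $\rho_{\reg}^{-1}(x_0)$, where $x_0\in M_{\reg}$ is the given point. By Lemma~\ref{lemma quotient comparison applications}, closedness of $\Gr\fol$ is equivalent to closedness of $\fol$. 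So it suffices to prove the statement for regular Killing foliations on compact manifolds, in the weakened form: nonpositive transverse Ricci curvature, negative definite at one point.

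For the regular case I would argue with the Molino fields. Take $A\in\mathfrak a$ and let $X$ be the global horizontal representative of the transverse Killing field $X_A$. I would use the transverse Bochner formula for transverse Killing fields. One integrates $\tfrac12\Delta_B|X|^2=|\nabla X|^2-\ric^{\intercal}(X,X)$ against the volume with the basic Laplacian. The corrections for non-minimal leaves are handled by the usual tautness modification, since $\mathscr C$-fields commute with the mean curvature class on Killing foliations. The result is
\[
\int_{\Gr M}|\nabla^{\intercal}X|^2-\ric^{\intercal}_{\Gr\fol}(X,X)=0.
\]
Both integrands are nonnegative, so $\nabla^{\intercal}X=0$ and $\ric^{\intercal}(X,X)=0$ everywhere. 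Since $X$ is transversely parallel, $X\equiv 0$ follows from $X$ vanishing at the one point where $\ric^{\intercal}$ is negative definite. Thus every structural field is trivial, so $d=\dim\mathfrak a=0$, and $\overline{\Gr\fol}=\Gr\fol$, because leaf closures are the $\mathfrak a$-orbits. The uniform bound $\ric_{\fol}^{\intercal}\leq-\lambda$ is then a special case.

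The main obstacle is justifying the integrated Bochner identity when the leaves are not minimal. To avoid tautness issues, I would first try to run it on a closed approximation from \cite[Theorem~B]{positivecaramellotoben}. Its leaf orbifold is compact, and the structural torus acts isometrically on it, so the classical orbifold Bochner theorem applies directly. However, the curvature hypothesis holds only approximately after deformation, and negativity at a single point might be lost. So I expect to instead use Domínguez's theorem that the transverse metric can be modified to make $\Gr\fol$ taut. I would check that this conformal change along leaves does not affect $\metric^{\intercal}$, and then apply the Bochner argument directly upstairs.
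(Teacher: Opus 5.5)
Your reduction to the resolution and your endgame coincide with the paper's proof: every structural transverse Killing field of $\Gr\fol$ vanishes, so $\mathfrak a=0$, and Lemma~\ref{lemma quotient comparison applications} gives closedness of $\fol$. The difference is that the paper invokes Jung--Richardson's vanishing theorem for transverse conformal Killing fields, while you reprove the needed case by a transverse Bochner argument. That is fine in principle, but your step handling the mean curvature is wrong as stated.

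Dom\'{\i}nguez's theorem gives \emph{tenseness}, i.e.\ a bundle-like metric whose mean curvature form $\kappa$ is basic. It does not give tautness. Tautness is obstructed in general by the \'Alvarez class $[\kappa_b]$ in degree-one basic cohomology, where $\kappa_b$ is the basic component of $\kappa$; Carri\`ere's Riemannian flow on the hyperbolic torus bundle $\mathbb T^3_A$ is not taut. Your remark that Molino fields ``commute with the mean curvature class'' does not produce minimal leaves either. Whether Killing foliations are always taut would be a separate statement needing its own proof, which you do not give. Without it, the identity $\tfrac12\Delta_B|X|^2=|\nabla X|^2-\ric^{\intercal}(X,X)$ is not correct for the basic Laplacian, so your integrated identity is unjustified.

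The repair is standard and does not need tautness.
\begin{itemize}
\item The pointwise Bochner identity holds for the transverse Laplacian $\Delta_T$, the pullback of the Laplacians of the local quotients.
\item For a basic function $f$, $\Delta_M f$ and $\Delta_T f$ differ by the term $\kappa(\nabla f)$. Integrating gives $\int_{\Gr M}\Delta_T f=\pm\int_{\Gr M}f\,\delta_B\kappa_b$.
\item After Dom\'{\i}nguez, a further modification of the leafwise metric (\'Alvarez L\'opez; March--Min-Oo--Ruh) makes $\kappa$ basic with $\delta_B\kappa=0$. It leaves the transverse metric unchanged, hence also both Ricci hypotheses and the structural fields.
\end{itemize}
With this normalization your displayed identity holds, and the rest of your argument goes through. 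This is exactly the ``basic mean curvature coclosed'' hypothesis under which the paper applies Jung--Richardson, so the repaired proof is a self-contained version of the paper's.

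You were right to discard the closed-approximation route, since it only retains curvature bounds that hold with a uniform margin.
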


\begin{proof}
The resolved transverse Ricci tensor is nonpositive everywhere and negative definite over the specified regular point. Pass, if necessary, to transverse and ambient orientation covers of the regular resolution. Recall the pullback $\widehat{\Gr\fol}$ is Killing with the same structural algebra $\mathfrak{a}$. Jung--Richardson's vanishing theorem \cite[Corollary~3.14]{jungrichardsongallotmeyer} applies after choosing a bundle-like metric whose basic mean curvature is coclosed. Their metric normalization changes only the leafwise metric and preserves the transverse metric \cite[Section~3, preceding Theorem~3.12]{jungrichardsongallotmeyer}; it therefore preserves both Ricci hypotheses. The theorem makes every global transverse conformal Killing field zero, in particular each global structural Killing field. The structural action is effective, so $\mathfrak{a}=0$, that is, $\widehat{\Gr\fol}$ is closed. Finite-cover projection and Lemma~\ref{lemma quotient comparison applications} give closedness of $\fol$.
\end{proof}

We now consider fundamental group growth. For a finitely generated group $G$ and a finite symmetric generating set $S$, let
\[B_S(m)=\{\gamma\in G:\ell_S(\gamma)\leq m\},\]
where $\ell_S(\gamma)$ is the minimum length of a word in $S$ representing $\gamma$. We say that $G$ has \emph{polynomial growth of degree at most $q$} if there is a constant $C_S>0$ such that
\[\#B_S(m)\leq C_S(1+m)^q,\]
for $m\in\mathbb N$. This property is independent of the finite generating set.

\begin{thm}[Nonnegative Ricci curvature and polynomial growth]
\label{theorem nonnegative Ricci growth applications}
Let $(M,\fol,\metric)$ be a complete, closed, infinitesimally polar singular Riemannian foliation of codimension $q\geq1$, with compact leaf space $M/\fol$. If $\ric_{\fol}^{\intercal}\geq0$ on $M_{\reg}$, then both
\[\pi_1^{\mathrm{orb}}(M\Slash\fol)\quad\text{and}\quad\pi_1(M/\fol)\]
have polynomial growth of degree at most $q$.
\end{thm}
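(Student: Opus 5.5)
The plan is to pass to the geometric resolution and reduce to a statement about compact Riemannian orbifolds with nonnegative Ricci curvature. By Lemma~\ref{lemma quotient comparison applications}, $\Gr\fol$ is a complete closed regular Riemannian foliation. Its leaf orbifold $\Gr M\Slash\Gr\fol$ is isometric, as a Riemannian orbifold, to $M\Slash\fol$, and $\Gr M/\Gr\fol\cong M/\fol$ is compact. Corollary~\ref{corollary transverse curvature bounds resolution} gives $\ric_{\Gr\fol}^{\intercal}\geq0$ on all of $\Gr M$. Hence $\mathcal O:=M\Slash\fol$ is a compact $q$-dimensional Riemannian orbifold whose Ricci curvature is nonnegative everywhere. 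This includes the singular points, where it is computed in uniformizing charts via the local quotients of $\Gr\fol$. Both groups in the statement are then invariants of $\mathcal O$: the orbifold fundamental group and the fundamental group of the underlying space $|\mathcal O|$.

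Next I treat $\pi_1^{\mathrm{orb}}(\mathcal O)$ by a Milnor-type volume argument on the orbifold universal cover $\widetilde{\mathcal O}$. This cover is a complete Riemannian orbifold with $\ric\geq0$, on which $\pi_1^{\mathrm{orb}}(\mathcal O)$ acts properly and isometrically with compact quotient. I fix a regular point $\tilde x$. A finite symmetric generating set $S$ is obtained in the usual way from elements moving $\tilde x$ at most $2\operatorname{diam}(\mathcal O)+1$. Švarc--Milnor then gives $\ell_S(\gamma)\leq C\,d(\tilde x,\gamma\tilde x)+C$. Choose $\varepsilon$ so that the $\varepsilon$-ball around $\tilde x$ meets its translates only for the finitely many $\gamma$ in the isotropy-type stabilizer; that stabilizer is trivial, since $\tilde x$ is regular. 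Then the balls $B(\gamma\tilde x,\varepsilon)$, for $\gamma\in B_S(m)$, are disjoint and contained in $B(\tilde x,Cm+C')$. Bishop--Gromov comparison on orbifolds with $\ric\geq0$ gives $\vol B(\tilde x,R)\leq\omega_qR^q$. Since the regular part has full measure and is geodesically convex in the relevant sense, the standard Riemannian proof applies there. Combining, $\#B_S(m)\,\vol B(\tilde x,\varepsilon)\leq\omega_q(Cm+C')^q$, which is the required polynomial bound. For the underlying space, the map $\pi_1^{\mathrm{orb}}(\mathcal O)\to\pi_1(|\mathcal O|)$ is surjective; its kernel is normally generated by the local isotropy groups. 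Polynomial growth of degree at most $q$ passes to quotients, because the images of $S$ generate and balls only shrink.

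I expect the main obstacle to be the legitimacy of Bishop--Gromov on the orbifold universal cover. The singular set there need not be negligible for geodesic arguments, and cut loci interact with mirrors. One option is to invoke that compact Riemannian orbifolds with $\ric\geq0$ are $\mathrm{RCD}(0,q)$ or $\mathrm{CD}(0,q)$ spaces, for which Bishop--Gromov holds; I would take this route first. An alternative avoids orbifold comparison altogether and works upstairs with the regular foliation. The leaf closures of the lift of $\Gr\fol$ to the holonomy-type cover, or equivalently Haefliger's classifying construction, realize $\pi_1^{\mathrm{orb}}$ as the fundamental group of the transverse structure. One would then apply the known result for closed regular Riemannian foliations with nonnegative transverse Ricci curvature, if available in the cited literature. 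I would also double check that the generating-set and ball-packing steps are unaffected by nontrivial isotropy, by using only regular basepoints.
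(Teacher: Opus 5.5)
Your proposal is correct and follows the paper's proof. Both reduce, via Lemma~\ref{lemma quotient comparison applications} and Corollary~\ref{corollary transverse curvature bounds resolution}, to a compact $q$-dimensional orbifold $\mathcal O$ with $\ric\geq0$ at all points, obtain polynomial growth of $\pi_1^{\mathrm{orb}}(\mathcal O)$, and pass it to $\pi_1(|\mathcal O|)$ by the word-length comparison along the epimorphism. The only difference is that you sketch Milnor's packing argument and worry about Bishop--Gromov on the orbifold cover, whereas the paper simply cites Borzellino's orbifold version \cite[Proposition~10]{borzellinoRicci}, which settles that obstacle.

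Two small fixes to your sketch:
\begin{enumerate}
\item The containment $B(\gamma\tilde x,\varepsilon)\subset B(\tilde x,Cm+C')$ for $\gamma\in B_S(m)$ comes from the trivial bound $d(\tilde x,\gamma\tilde x)\leq\ell_S(\gamma)\max_{s\in S}d(\tilde x,s\tilde x)$. The \v{S}varc--Milnor inequality you quote goes the other way and does not give it.
\item You should choose $\tilde x$ over a regular point of $\mathcal O$. Regularity in $\widetilde{\mathcal O}$ alone does not force a trivial stabilizer.
\end{enumerate}
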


\begin{proof}
By Lemma~\ref{lemma quotient comparison applications}, the resolved foliation is closed, and its leaf orbifold is isometric to $\mathcal O=M\Slash\fol$. This is a compact Riemannian orbifold of dimension $q$ by hypothesis. Corollary~\ref{corollary transverse curvature bounds resolution} gives $\ric_{\Gr\fol}^{\intercal}\geq0$ everywhere. Since the curvature of a regular foliation is that of its local quotients, the orbifold metric on $\mathcal O$ has nonnegative Ricci curvature, including at its singular points. Borzellino's theorem \cite[Proposition~10]{borzellinoRicci} therefore gives polynomial growth of degree at most $q$ for $\pi_1^{\mathrm{orb}}(\mathcal O)$.

The natural epimorphism
\[\pi_1^{\mathrm{orb}}(\mathcal O)\longrightarrow\pi_1(|\mathcal O|)=\pi_1(M/\fol)\]
forgets the orbifold information \cite[Remark~8]{borzellinoRicci}. If $S$ is a finite symmetric generating set of the first group, its image $T$ generates the second, and every word in $T$ lifts to a word of the same length in $S$. Consequently $\#B_T(m)\leq\#B_S(m)$, which proves the remaining assertion.
\end{proof}

\begin{thm}[Negative curvature and exponential growth]
\label{theorem negative sectional growth applications}
Suppose that $M$ is compact, $q\geq2$, and $-b^2\leq\sec_{\fol}\leq-a^2<0$ on $M_{\reg}$, for $0<a\leq b$. Then $\fol$ is closed and $\pi_1^{\mathrm{orb}}(M\Slash\fol)$ grows exponentially. Moreover, there is a constant $N(q)$, depending only on $q$, such that
\[\sum_{j=1}^q b_j(\fol)\leq N(q)b^q\vol(M\Slash\fol).\]
\end{thm}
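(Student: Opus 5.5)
The plan is to transfer everything to the geometric resolution and work with a regular Killing foliation there. By Corollary~\ref{corollary transverse curvature bounds resolution}, the bounds $-b^2\leq\sec_{\Gr\fol}\leq-a^2<0$ hold on all of $\Gr M$, which is compact by properness of $\rho$. By Corollary~\ref{corollary killing resolution}, $\Gr\fol$ is a regular Killing foliation with the same structural algebra $\mathfrak a$.

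Closedness comes first. Negative sectional curvature forces negative definite transverse Ricci curvature at every point, since $q\geq2$ gives $\ric^\intercal_{\Gr\fol}\leq-(q-1)a^2<0$. So Theorem~\ref{theorem negative curvature applications} applies directly, either downstairs via $M_{\reg}$ or upstairs, and gives that $\fol$ is closed. By Lemma~\ref{lemma quotient comparison applications}, $\Gr\fol$ is then closed too, and $\Gr M\Slash\Gr\fol\cong M\Slash\fol=:\mathcal O$ is an isometry of compact $q$-dimensional Riemannian orbifolds. Because the transverse curvature of a regular foliation is the curvature of its local quotients, $\mathcal O$ satisfies $-b^2\leq\sec\leq-a^2$ everywhere, including at orbifold singular points.

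Exponential growth then reduces to the orbifold setting. I would use that a compact negatively curved Riemannian orbifold is developable and is a good orbifold $\widetilde{\mathcal O}/\pi_1^{\mathrm{orb}}(\mathcal O)$, with $\widetilde{\mathcal O}$ a Hadamard manifold of pinched negative curvature. This follows from the orbifold Cartan--Hadamard theorem, since nonpositively curved complete orbifolds are locally CAT(0) and hence developable. Then $\pi_1^{\mathrm{orb}}(\mathcal O)$ acts properly and cocompactly by isometries on $\widetilde{\mathcal O}$. By \v{S}varc--Milnor it is quasi-isometric to $\widetilde{\mathcal O}$, and volume growth in $\widetilde{\mathcal O}$ is exponential by Bishop--Gromov comparison with $\sec\leq-a^2$ (G\"unther's inequality). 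Hence the group grows exponentially; alternatively, Milnor's theorem applies verbatim once the cover is known to be a manifold.

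The Betti estimate is the main obstacle. My route is as follows. By Proposition~\ref{proposition basic forms resolution} and closedness, $b_j(\fol)=b_j(\Gr\fol)=b_j(\mathcal O)$, the latter being real de Rham cohomology of $\mathcal O$, equal to that of $|\mathcal O|$. I would then invoke Gromov's Betti number bound for pinched negative curvature, $\sum b_j\leq N(q)\,b^q\vol$, which rests on counting thick parts via the Margulis lemma. The first thing to try is the orbifold version of this bound if it is available, for instance through the regular-foliation version in the cited Caramello--Töben/Neubauer work. Failing that, I would try Selberg's lemma to pass to a finite manifold cover $M'\to\mathcal O$ of degree $k$, using that $\pi_1^{\mathrm{orb}}$ is a finitely generated subgroup of isometries of $\widetilde{\mathcal O}$ and linearity issues aside. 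In that case $b_j(\mathcal O)\leq b_j(M')\leq N b^q\,k\vol(\mathcal O)$, but this loses the factor $k$. So the genuine argument must run Gromov's covering-by-Margulis-thin/thick-balls argument directly on the orbifold with its Riemannian volume, using a good cover by convex balls whose nerve computes the orbifold de Rham cohomology. Here $\vol(M\Slash\fol)$ is the orbifold volume, which equals the volume of the regular part.
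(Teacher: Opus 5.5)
Your closedness step and the transfer to the resolution agree with the paper. These consist of compactness of $\Gr M$, the pinching $-b^2\leq\sec_{\Gr\fol}\leq-a^2$ from Corollary~\ref{corollary transverse curvature bounds resolution}, and the orbifold isometry $\Gr M\Slash\Gr\fol\cong M\Slash\fol$ from Lemma~\ref{lemma quotient comparison applications}. Your exponential-growth argument is a valid, more self-contained alternative. The paper simply applies \cite[Theorem~4.6]{equivariantcaramellotoben} to the compact regular foliation $\Gr\fol$. You instead rederive the result from developability of nonpositively curved orbifolds, \v{S}varc--Milnor, and G\"unther's volume comparison. (Bishop--Gromov is the wrong name here: it gives upper volume bounds from lower Ricci bounds.)

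The Betti estimate, however, is not proved in your proposal. You list three options without committing to any of them.
\begin{enumerate}[(i)]
\item The Selberg route fails. The degree $k$ of the manifold cover is uncontrolled, so the constant no longer depends only on $q$. Moreover, $\pi_1^{\mathrm{orb}}(\mathcal O)$ need not be linear in variable negative curvature, so Selberg's lemma is not available in general.
\item ``Running Gromov's thick/thin argument directly on the orbifold'' is a claim, not an argument. One would have to redo the Margulis-lemma count in the presence of finite isotropy.
\end{enumerate}
The missing step is the option you mention first and then set aside. The regular-foliation version of the estimate is \cite[Corollary~4.7]{equivariantcaramellotoben}. It applies verbatim to the compact regular Killing (and now closed) foliation $\Gr\fol$ with the pinching you already established, giving $\sum_{j=1}^q b_j(\Gr\fol)\leq N(q)b^q\vol(\Gr M\Slash\Gr\fol)$. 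This is exactly the paper's proof. The left side equals $\sum_j b_j(\fol)$ by Proposition~\ref{proposition basic forms resolution}. The volume equals $\vol(M\Slash\fol)$ by the orbifold isometry of Lemma~\ref{lemma quotient comparison applications}. Without that citation, or an actual proof of an orbifold version of Gromov's Betti number theorem, your argument does not establish the displayed inequality.
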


\begin{proof}
The upper sectional curvature bound gives $\ric_{\fol}^{\intercal}\leq-(q-1)a^2$, so Theorem~\ref{theorem negative curvature applications} makes $\fol$ closed. Corollary~\ref{corollary transverse curvature bounds resolution} preserves the same sectional pinching on the compact regular resolution. Apply \cite[Theorem~4.6 and Corollary~4.7]{equivariantcaramellotoben} to obtain exponential growth of its orbifold fundamental group and the Betti number estimate. The resolved leaf orbifold is isometric to $M\Slash\fol$ by Lemma~\ref{lemma quotient comparison applications}, and the basic cohomology isomorphism of Proposition~\ref{proposition basic forms resolution} transfers the estimate.
\end{proof}

The deformation method also gives metric approximations and transverse sphere theorems.

\begin{thm}\label{theorem GH approximation applications}
Suppose that $M$ is compact. There is a sequence of compact $q$-dimensional Riemannian orbifolds $(\mathcal O_i,\mathrm{h}_i)$ such that
\[(|\mathcal O_i|,\mathrm d_i)\xrightarrow{\mathrm{GH}} M/\overline{\fol}\quad\text{and}\quad\chi(|\mathcal O_i|)=\chi(\fol),\]
where $\mathrm d_i$ is the induced quotient length metric. Any fixed strict constant bounds on transverse sectional or Ricci curvature that hold on $M_{\reg}$ with a uniform margin can be imposed on every $(\mathcal O_i,\mathrm{h}_i)$. Moreover, if $\actionarrow{\mathfrak{a}}{\fol}$ is equivariantly formal, then $b_j(|\mathcal O_i|)=b_j(\fol)$, for every $i$ and $j$.
\end{thm}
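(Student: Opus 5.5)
The plan is to work on the resolution and then transfer everything back. Since $M$ is compact, $\Gr M$ is compact by properness of $\rho$. By Corollary~\ref{corollary killing resolution}, $\Gr\fol$ is a regular Killing foliation with structural algebra $\mathfrak a$ and defect $d$. By Lemma~\ref{lemma quotient comparison applications}, $\Gr M/\overline{\Gr\fol}$ is isometric to $M/\overline{\fol}$, so it suffices to produce the approximating orbifolds for the regular foliation $\Gr\fol$.

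First I would apply the deformation of \cite[Theorem~B]{positivecaramellotoben} to $\Gr\fol$, exactly as in the proof of Theorem~\ref{theorem closed orbifold models applications}. Choosing endpoints closer and closer to $\Gr\fol$ gives a sequence of closed regular Riemannian foliations $\Gr\mathcal G_i$ on $\Gr M$. These lie inside the leaf closures of $\Gr\fol$ and carry transverse metrics converging smoothly to $(\Gr\metric)^\intercal$. Set $\mathcal O_i=\Gr M\Slash\Gr\mathcal G_i$ with its quotient metric $\mathrm h_i$.

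For Gromov--Hausdorff convergence, I would compare the pseudodistances. The distance on $|\mathcal O_i|$ is the transverse distance of $\Gr\mathcal G_i$. Leaves of $\Gr\mathcal G_i$ are dense in tori inside closures of $\Gr\fol$, and the closures of $\Gr\fol$ are the $\mathbb{T}^d$-orbits of the leaves of $\Gr\mathcal G_i$. So the natural surjection $|\mathcal O_i|\to\Gr M/\overline{\Gr\fol}$ is $1$-Lipschitz up to the metric perturbation. Its fibers are $\mathbb T^d$-orbits, and I would need these to have small diameter. That means choosing the torus-action directions at the endpoint to be nearly tangent to $\Gr\fol$, so that orbits have diameter $\varepsilon_i\to0$. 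This gives an $\varepsilon_i$-isometry. I expect this to be the main obstacle. The relevant control should come from the construction in \cite{positivecaramellotoben}, where the endpoint leaves approximate $\Gr\fol$ in the Grassmannian sense, or alternatively from the statement of \cite[Theorem~C]{blupcaramello} in the regular case, which I would invoke if available.

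The remaining properties transfer as follows.
\begin{itemize}
\item \emph{Curvature.} Corollary~\ref{corollary transverse curvature bounds resolution} extends the strict bounds with uniform margin to all of the compact $\Gr M$. Smooth convergence of the transverse metrics then preserves them for large $i$, after discarding finitely many terms. The orbifold curvature equals the transverse curvature of $\Gr\mathcal G_i$.
\item \emph{Euler characteristic.} Invariance under deformation \cite[Theorem~7.4]{positivecaramellotoben}, together with Proposition~\ref{proposition basic forms resolution} and the orbifold de Rham theorem, gives $\chi(|\mathcal O_i|)=\chi(\fol)$.
\item \emph{Betti numbers.} Under equivariant formality, Proposition~\ref{proposition equivariant cohomology resolution} and \cite[Theorem~6.3]{equivariantcaramellotoben} give $b_j(|\mathcal O_i|)=b_j(\fol)$, as in Theorem~\ref{theorem closed orbifold models applications}.
\end{itemize}
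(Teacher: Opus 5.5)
Your overall plan matches the paper's: pass to $\Gr\fol$, take closed approximations $\Gr\mathcal G_i$ with leaf orbifolds $\mathcal O_i$, and identify the limit by Lemma~\ref{lemma quotient comparison applications}. The curvature part (Corollary~\ref{corollary transverse curvature bounds resolution}, smooth dependence and compactness) and the Euler and Betti number parts (as in Theorem~\ref{theorem closed orbifold models applications}) are fine. The gap is the Gromov--Hausdorff convergence. You correctly reduce it to a uniform bound on the diameters of the $\mathbb T^d$-orbits in $|\mathcal O_i|$, but then leave that bound unproved. The paper does not derive it from \cite[Theorem~B]{positivecaramellotoben}. Instead it applies \cite[Proposition~2.2, Lemma~3.3 and Theorem~5.2]{spherecaramelloneubauer} directly to the regular Killing foliation $\Gr\fol$. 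This supplies closed foliations $\Gr\mathcal G_i$, adapted metrics $\Gr\metric_i\to\Gr\metric$ uniformly, and the convergence of the leaf orbifolds to $\Gr M/\overline{\Gr\fol}$. Your fallback, \cite[Theorem~C]{blupcaramello} as a black box, would not by itself tell you that its approximants are the smooth deformations your curvature argument needs.

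If you want to prove the orbit estimate from the deformation yourself, the mechanism you sketch is not the right one. The leaves of $\Gr\mathcal G_i$ are closed, not dense in tori. The $\mathbb T^d$-generators are transverse Killing fields, not directions you can choose nearly tangent to $\Gr\fol$. The argument needs two ingredients:
\begin{enumerate}
\item[(a)] By compactness, for each $\varepsilon>0$ there is $\ell(\varepsilon)$ such that, from every $x$, a leafwise path in $\Gr\fol$ of length at most $\ell(\varepsilon)$ comes within $\varepsilon$ of any prescribed point of $\overline{\Gr\fol(x)}$. Argue by contradiction with convergent sequences, using that the closure relation is closed and that leafwise paths depend continuously on their starting point.
\item[(b)] Since the endpoints approximate $\Gr\fol$ in the Grassmannian and $\Gr\metric_i\to\Gr\metric$, the $\Gr\mathcal G_i$-horizontal component of $\Gr\metric$-unit vectors tangent to $\Gr\fol$ has $\Gr\metric_i$-norm at most $\eta_i\to0$.
\end{enumerate}
A curve projected to $\mathcal O_i$ has length equal to its $\Gr\mathcal G_i$-transverse length. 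Hence every $\mathbb T^d$-orbit in $|\mathcal O_i|$ has diameter at most $\ell(\varepsilon)\eta_i+2\varepsilon$ for large $i$. Combine this with the bi-Lipschitz comparison of the $\Gr\metric_i$- and $\Gr\metric$-distances between leaf closures. This makes $|\mathcal O_i|\to|\mathcal O_i|/\mathbb T^d$ an $\varepsilon_i$-isometry onto a space $\varepsilon_i$-close to $M/\overline{\fol}$, with $\varepsilon_i\to0$, which closes the gap.
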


\begin{proof}
Apply \cite[Proposition~2.2, Lemma~3.3 and Theorem~5.2]{spherecaramelloneubauer} to $\Gr\fol$, and identify its metric leaf-closure space with $M/\overline{\fol}$. This gives closed regular foliations $\Gr\mathcal{G}_i$ and adapted metrics $\Gr\metric_i$ converging uniformly to $\Gr\metric$. Take $\mathcal O_i=\Gr M\Slash\Gr\mathcal{G}_i$, with the quotient metrics $\mathrm{h}_i$. The approximations arise from regular deformations, so their Euler characteristics, and under equivariant formality their Betti numbers, are those in
Theorem~\ref{theorem closed orbifold models applications}. Their transverse metrics and curvature tensors depend smoothly on the deformation parameters. Compactness therefore allows any fixed strict inequalities to be retained by choosing the parameters sufficiently close to the initial ones.
\end{proof}

\begin{thm}[Transverse diameter sphere theorem]
\label{theorem diameter sphere applications}
Suppose that $M$ is compact, $\sec_{\fol}\geq\kappa>0$ on $M_{\reg}$, and
\[\diam(M/\overline{\fol})>\frac{\pi}{2\sqrt\kappa}.\]
Then there are a finite subgroup $\Gamma<\operatorname{O}(q+1)$ and a continuous effective action $\actionarrow{\mathbb{T}^d}{\mathbb{S}^q/\Gamma}$ such that there is a homeomorphism
\[M/\overline{\fol}\cong(\mathbb{S}^q/\Gamma)/\mathbb{T}^d\]
In particular, if $\fol$ is closed, then $M/\fol$ is homeomorphic to $\mathbb{S}^q/\Gamma$.
\end{thm}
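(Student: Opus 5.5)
The plan is to transfer the statement to the geometric resolution and invoke the regular transverse diameter sphere theorem of Caramello--Neubauer \cite[Theorem~4.1]{spherecaramelloneubauer}. By properness, $\Gr M$ is compact. Corollary~\ref{corollary killing resolution} makes $\Gr\fol$ a regular Killing foliation with the same structural algebra $\mathfrak a$, hence the same defect $d$. Corollary~\ref{corollary transverse curvature bounds resolution} gives $\sec_{\Gr\fol}\geq\kappa$ on all of $\Gr M$. Lemma~\ref{lemma quotient comparison applications} gives an isometry $\Gr M/\overline{\Gr\fol}\cong M/\overline{\fol}$, so the diameter hypothesis holds upstairs as well. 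Thus every hypothesis of the regular theorem is satisfied by $\Gr\fol$.

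Next I will apply the regular result, or reproduce its mechanism if its stated form differs. By \cite[Theorem~B]{positivecaramellotoben} (as used in Theorem~\ref{theorem closed orbifold models applications}), deform $\Gr\fol$ inside its leaf closures to a closed foliation $\Gr\mathcal G$. Its leaf orbifold $\mathcal O$ carries an effective isometric $\mathbb T^d$-action with $|\mathcal O|/\mathbb T^d\cong M/\overline{\fol}$. Choosing the deformation close enough, as in Theorem~\ref{theorem GH approximation applications}, we keep $\sec\geq\kappa'$ for some $\kappa'$ with $\pi/(2\sqrt{\kappa'})<\diam(M/\overline{\fol})$.

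We then need $\diam\mathcal O>\pi/(2\sqrt{\kappa'})$. The torus orbits are the fibers over $M/\overline{\fol}$, and the quotient map $|\mathcal O|\to|\mathcal O|/\mathbb T^d$ is $1$-Lipschitz, so $\diam\mathcal O\geq\diam(M/\overline{\fol})$. The orbifold Grove--Shiohama theorem, in the form used in \cite{spherecaramelloneubauer}, then gives $|\mathcal O|\cong\mathbb S^q/\Gamma$ with $\Gamma<\operatorname{O}(q+1)$ finite. Transporting the torus action through this homeomorphism yields a continuous effective action $\actionarrow{\mathbb T^d}{\mathbb S^q/\Gamma}$, and hence $M/\overline{\fol}\cong(\mathbb S^q/\Gamma)/\mathbb T^d$.

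For the closed case, $d=0$. Lemma~\ref{lemma quotient comparison applications} shows $\Gr\fol$ is closed and $M/\fol\cong\Gr M/\Gr\fol=|\mathcal O|$, which gives the final assertion.

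The main obstacle is keeping the strict diameter inequality and the curvature bound simultaneously along the closed approximation, since the deformation alters the transverse metric. The plan is to rely on the uniform convergence of metrics and of the leaf-closure quotients in \cite[Theorem~5.2]{spherecaramelloneubauer}. Lower semicontinuity of the diameter under Gromov--Hausdorff convergence is not needed, because the Lipschitz projection inequality above works for each approximant, provided its torus quotient is isometric, not merely homeomorphic, to $M/\overline{\fol}$. If it is only homeomorphic, I will instead use Gromov--Hausdorff closeness to get $\diam(\mathcal O_i/\mathbb T^d)\to\diam(M/\overline{\fol})$, which still exceeds the threshold for large $i$.
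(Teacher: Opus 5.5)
Your proposal is correct and follows the paper's route: you transfer compactness, the Killing structure with the same $\mathfrak a$, the bound $\sec_{\Gr\fol}\geq\kappa$ and the isometry of leaf-closure spaces to the resolution, then invoke \cite[Theorem~4.1]{spherecaramelloneubauer}. One claim is inaccurate: not every hypothesis of that theorem holds as stated, because its curvature hypothesis is strict; the paper fixes this by a constant rescaling with $1<c<\bigl(2\sqrt\kappa\,\diam(M/\overline{\fol})/\pi\bigr)^2$, which is exactly your trade of diameter slack for a smaller $\kappa'$, so you do not need to reproduce the deformation or control the diameter along the approximants.
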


\begin{proof}
Put $X=M/\overline{\fol}$ and let $D=\diam X$. The resolution has curvature at least $\kappa$ and the same diameter $D$. Multiplying its metric by $\kappa$ gives transverse curvature at least $1$ and diameter $\sqrt\kappa D>\pi/2$. Choose
\[1<c<\left(\frac{2\sqrt\kappa D}{\pi}\right)^2.\]
Multiplying the resulting metric by $1/c$ gives curvature at least $c>1$ and diameter $\sqrt{\kappa/c}D>\pi/2$. We may therefore apply \cite[Theorem~4.1]{spherecaramelloneubauer} (whose curvature hypothesis is strict), and use Lemma~\ref{lemma quotient comparison applications}. Constant rescaling changes neither the leaves nor the structural algebra. The torus action is the isometric action on the quotient orbifold transported by the resulting homeomorphism to $\mathbb{S}^q/\Gamma$. If $\fol$ is closed, then $d=0$ and $X=M/\fol$.
\end{proof}

\begin{thm}[Transverse quarter-pinched sphere theorem]
\label{theorem quarter pinched applications}
Let $(M,\fol,\metric)$ be a connected, complete infinitesimally polar singular Riemannian foliation of codimension $q\geq3$. Suppose that there are constants $\alpha,\beta$ such that $0<\alpha\leq\sec_{\fol}\leq\beta<4\alpha$ on $M_{\reg}$. Then there are a Riemannian metric $\mathrm{h}$ on $\mathbb{S}^q$, satisfying the same
curvature bounds, and a countable subgroup $\Gamma<\operatorname{Iso}(\mathbb{S}^q,\mathrm{h})$ such that there is a homeomorphism $M/\fol\cong \mathbb{S}^q/\Gamma$ and an isometry $M/\overline{\fol}\cong(\mathbb{S}^q,\mathrm{h})/\overline\Gamma$. If $\fol$ is closed, then $\Gamma$ is finite and $M\Slash\fol$ is orbifold-diffeomorphic to $\mathbb{S}^q/\Gamma$.
\end{thm}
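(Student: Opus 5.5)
We pass to the geometric resolution and then quote the regular quarter-pinched transverse sphere theorem of \cite{spherecaramelloneubauer}, as in the preceding applications. By Section~\ref{section lifting}, $\Gr\fol$ is a complete regular Riemannian foliation of codimension $q\geq3$ on the connected manifold $\Gr M$. Corollary~\ref{corollary transverse curvature bounds resolution} gives $\alpha\leq\sec_{\Gr\fol}\leq\beta$ on all of $\Gr M$, and the ratio satisfies $\beta<4\alpha$. Note that neither compactness nor the Killing property is assumed here.

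Next we apply the regular statement, \cite[Theorem~5.3]{spherecaramelloneubauer}, to $(\Gr M,\Gr\fol,\Gr\metric)$. We expect it to have the following form for complete regular Riemannian foliations with quarter-pinched transverse curvature. The universal cover of the leaf pseudogroup's developing space is a sphere $(\mathbb{S}^q,\mathrm{h})$ satisfying the same pinching. The holonomy pseudogroup is then realized by a countable group $\Gamma$ of isometries, and $\Gr M/\Gr\fol\cong\mathbb{S}^q/\Gamma$ while $\Gr M/\overline{\Gr\fol}\cong(\mathbb{S}^q,\mathrm{h})/\overline\Gamma$. The underlying argument is that the transverse Bonnet--Myers theorem bounds the developing space, and lifting the pinched metric to the simply connected developing manifold produces a complete simply connected pinched manifold. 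This manifold is a sphere by the Berger--Klingenberg theorem, or, to keep the stated metric, by taking $\mathrm{h}$ to be the developed transverse metric itself.

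We then transfer the conclusions with Lemma~\ref{lemma quotient comparison applications}. That lemma gives the homeomorphism $\Gr M/\Gr\fol\cong M/\fol$ and the isometry $\Gr M/\overline{\Gr\fol}\cong M/\overline{\fol}$, so composing with the regular conclusions yields both claimed identifications. If $\fol$ is closed, the lemma makes $\Gr\fol$ closed, and its leaf orbifold is isometric to $M\Slash\fol$. The leaf space is compact by Corollary~\ref{corollary positive Ricci applications}. In the closed compact regular case, $\Gamma$ is finite and the identification $\Gr M\Slash\Gr\fol\cong\mathbb{S}^q/\Gamma$ is an orbifold diffeomorphism, either from the cited theorem or from the fact that a compact good orbifold is the quotient of its universal cover by its finite orbifold fundamental group. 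Composing with the orbifold isometry of Lemma~\ref{lemma quotient comparison applications} gives the final claim.

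The main obstacle is matching hypotheses. The regular theorem may be stated under a strict pinching or under compactness. Strictness is harmless here, since $\beta<4\alpha$ is already strict and the bounds are closed. Compactness of $\Gr M$ may fail, but what is actually needed is completeness together with compactness of $\Gr M/\overline{\Gr\fol}$. Transverse Bonnet--Myers provides the latter, transferred through the lemma. I would also check that the developing space of the pseudogroup is defined without transverse orientability; if it is not, I would first pass to the transverse orientation cover and enlarge $\Gamma$ by the deck group.
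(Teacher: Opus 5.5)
Your reduction is the paper's: resolve, transfer the pinching by Corollary~\ref{corollary transverse curvature bounds resolution}, apply \cite[Theorem~5.3]{spherecaramelloneubauer} to $\Gr\fol$ after multiplying the metric by some $c\in(\beta,4\alpha)$ (so the curvature lies in $(1/4,1)$), and descend through Lemma~\ref{lemma quotient comparison applications}. The difference is in what you ask the cited theorem to deliver. In the paper it supplies only the developing submersion: the lift of $\Gr\fol$ to the universal cover $p\colon\widehat{\Gr M}\to\Gr M$ is simple, with leaves the connected fibers of a submersion $\varrho\colon\widehat{\Gr M}\to\mathbb{S}^q$. Your sketch of its proof presupposes a developing manifold, but producing one is exactly what that theorem does.

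The objects you ``expect'' it to hand over must therefore be built from $\varrho$:
\begin{enumerate}
\item The unrescaled transverse metric is basic. It descends to $\mathrm h$ with $\alpha\leq\sec_{\mathrm h}\leq\beta$, and $\varrho$ is a Riemannian submersion for the complete pulled-back metric.
\item Deck transformations preserve $\widehat{\Gr\fol}$ and its transverse metric, so they act on $(\mathbb{S}^q,\mathrm h)$ by isometries. Their image $\Gamma$ is countable, being a quotient of $\pi_1(\Gr M)$.
\item Two fibers lie over the same leaf exactly when their base points share a $\Gamma$-orbit. Openness of $\varrho$ and $p$ makes this bijection a homeomorphism, even when $\mathbb{S}^q/\Gamma$ is non-Hausdorff.
\item Completeness makes the distance between fibers equal to that between their base points. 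Taking infima over deck translates and using $\overline{\Gamma x}=\overline\Gamma x$ gives the isometry with $(\mathbb{S}^q,\mathrm h)/\overline\Gamma$.
\end{enumerate}

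The closed case contains a step that fails as written. Finiteness of $\Gamma$ is not part of the cited result. Your appeal to ``a compact good orbifold is the quotient of its universal cover by its finite orbifold fundamental group'' is circular. You have shown neither that $M\Slash\fol$ is good nor that $\mathbb{S}^q\to\mathbb{S}^q/\Gamma$ is its orbifold universal covering. The latter holds only if $\Gamma$ acts properly discontinuously, which inside the compact group $\operatorname{Iso}(\mathbb{S}^q,\mathrm h)$ means $\Gamma$ is already finite.

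The missing argument is short. Closed leaves give closed $\Gamma$-orbits, so $K=\overline\Gamma$ is a compact Lie group with $Kx=\overline{\Gamma x}=\Gamma x$ countable. All $K$-orbits are therefore zero-dimensional, so the identity component of $K$ fixes every point and, acting effectively, is trivial. Hence $K$ is finite and $\Gamma=K$. Only then does $\varrho$ descend to the orbifold diffeomorphism $\Gr M\Slash\Gr\fol\cong\mathbb{S}^q/\Gamma$, which the lemma transfers to $M\Slash\fol$. Compactness of $M/\fol$ via Corollary~\ref{corollary positive Ricci applications} plays no role.
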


\begin{proof}
The complete regular foliation $\Gr\fol$ satisfies the same bounds. Choose $c$ with $\beta<c<4\alpha$ and multiply the resolved metric by $c$. Sectional curvature then lies in $[\alpha/c,\beta/c]\subset(1/4,1)$. Apply \cite[Theorem~5.3]{spherecaramelloneubauer} to this complete rescaled regular foliation. The resulting simple foliation is independent of the constant rescaling, and below we descend the original transverse metric, so the metric $\mathrm{h}$ has bounds $\alpha\leq\sec_{\mathrm{h}}\leq\beta$. Let $p\colon\widehat{\Gr M}\to\Gr M$ be the universal covering. The cited theorem makes $\widehat{\Gr\fol}$ simple, with leaves the connected fibers of a submersion
\[\varrho\colon\widehat{\Gr M}\longrightarrow \mathbb{S}^q.\]
Since the transverse metric is basic and the fibers are the leaves, it descends to a metric $\mathrm{h}$ on the sphere with the same curvature bounds. The pullback of the complete resolved metric is complete, and
$\varrho$ is a Riemannian submersion for this metric and $\mathrm{h}$.

Every element of $\mathrm{Deck}(p)$ preserves $\widehat{\Gr\fol}$ and its transverse metric. It thus induces a unique isometry of $(\mathbb{S}^q,\mathrm{h})$. Let $\Gamma$ be the image of this action. It is countable because it is an image of $\pi_1(\Gr M)$. Two fibers project to the same leaf downstairs exactly when their
base points are in the same $\Gamma$-orbit. The submersion $\varrho$ and the covering projection are open quotient maps, so the resulting bijection is a homeomorphism, even when the orbit space is non-Hausdorff. This identifies $\Gr M/\Gr\fol$ with $\mathbb{S}^q/\Gamma$, and Lemma~\ref{lemma quotient comparison applications} gives the first assertion on $M$.

Since $\widehat{\Gr M}$ is cmoplete and $\varrho$ is a Riemannian submersion, the distance between its fibers equals the distance between their base points. Taking the infimum over deck translates then identifies transverse distances with distances between sphere orbits. Since the closure of a $\Gamma$-orbit is its $\overline\Gamma$-orbit, the second assertion follows.

If $\fol$ is closed, all $\Gamma$-orbits are closed. Since $\Gamma$ is countable, each orbit of the compact Lie group $K=\overline\Gamma$ is countable and consequently has dimension zero. The identity component of $K$ fixes every point of the sphere and hence is trivial. Therefore $K$ is finite and $\Gamma=K$. The developing submersion then identifies the resolved transverse orbifold, and hence $M\Slash\fol$, with the finite global quotient $\mathbb{S}^q/\Gamma$.
\end{proof}

Finally, some hypotheses of the regular results we are transferring concern the topology of the ambient manifold. Their direct transfers thus necessarily involve $\Gr M$. We collect them in the following.

\begin{prop}
In items \textup{(i)--(iii)} below, the Killing assumption made at the beginning of this section is unnecessary: completeness and infinitesimal polarity suffice, in addition to the hypotheses stated in each item. In item \textup{(iv)} we retain the Killing assumption.
\begin{enumerate}[(i)]
\item If $q$ is even, $\sec_{\fol}\geq\kappa>0$ on $M_{\reg}$, and $\pi_1(\Gr M)$ is finite, then $\fol$ has a closed leaf.
\item If $M/\overline{\fol}$ is compact, $\ric_{\fol}^{\intercal}\leq-\lambda$ on $M_{\reg}$ for some $\lambda>0$, and $\pi_1(\Gr M)$ is finite, then $\fol$ is closed.
\item If $M$ is compact, $\pi_1(\Gr M)$ is finite, and the ordinary Euler characteristic $\chi(\Gr M)$ is nonzero, then $\fol$ is closed.
\item If $M$ is compact and $-b^2\leq\sec_{\fol}\leq-a^2<0$ on $M_{\reg}$ for constants $0<a\leq b$, then $\pi_1(\Gr M)$ grows exponentially.
\end{enumerate}
\end{prop}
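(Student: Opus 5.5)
The plan is to pass to the regular foliation $\Gr\fol$ on $\Gr M$, which is complete and regular with the curvature bounds transported by Corollary~\ref{corollary transverse curvature bounds resolution}. Items (i)--(iii) then reduce to classical results for regular Riemannian foliations on manifolds with finite fundamental group. The key point is that such a regular foliation is automatically Killing, so the Killing hypothesis is not needed downstairs.

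To see this, note that a locally constant sheaf on a manifold with finite $\pi_1$ becomes globally constant on the universal cover $\widetilde{\Gr M}$, which is compact when $\Gr M$ is. Pulling back $\Gr\fol$ to this finite normal cover gives a regular Killing foliation, and it is complete. Closed leaves, closedness and basic cohomology transfer back along the cover by the facts recalled in Section~\ref{section background}. They then transfer back to $M$ by Lemma~\ref{lemma quotient comparison applications} and Proposition~\ref{proposition basic forms resolution}.

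Compactness of $\Gr M$ needs care in (i) and (ii). In (i), transverse Bonnet--Myers gives compactness of the leaf-closure space. In (ii), it is assumed. To obtain compact models on the Killing cover, I would use the closure-space versions of the regular results rather than compactness of the ambient manifold.

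The three items are then handled as follows.
\begin{enumerate}[(i)]
\item On the Killing cover, run the Synge--Weinstein argument from the proof of Theorem~\ref{theorem positive curvature applications}(ii) to obtain a closed leaf, after passing to the transverse orientation cover if needed. Then project down.
\item Apply the argument of Theorem~\ref{theorem negative curvature applications} on the cover. Alternatively, apply \cite[Theorem~4.3]{equivariantcaramellotoben} directly: negative Ricci curvature kills every global transverse Killing field, so $\mathfrak a=0$ and the foliation is closed. Closedness then descends by Lemma~\ref{lemma quotient comparison applications}.
\item With $\Gr M$ compact and $\pi_1(\Gr M)$ finite, $\Gr\fol$ is Killing after lifting to the universal cover $\widetilde{\Gr M}$, and $\chi(\widetilde{\Gr M})=|\pi_1|\,\chi(\Gr M)\neq0$. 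For a regular Killing foliation, the fixed set $M^{\mathfrak a}$ of the structural action is the zero set of a generic structural Killing field. The localization formula for the ordinary Euler characteristic then gives closed leaves. I would instead invoke the regular criterion used in \cite[Theorem~4.6]{blupcaramello}: a Killing foliation on a compact manifold with nonzero Euler characteristic has $\mathfrak a$ with fixed points forcing $\mathfrak a=0$. The idea is that a nonvanishing transverse Killing field lifts to a leaf-transverse vector field without zeros, contradicting $\chi\neq0$.
\end{enumerate}
Item (iii) is the step I expect to be the main obstacle. One must check that a nonzero structural field produces a nowhere-vanishing ambient field. The difficulty is that a structural field may vanish on closed leaves, and the needed input is that such zeros are excluded because every leaf closure has the same dimension in a regular Killing foliation with $\mathfrak a\neq 0$. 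I would fall back on the cited regular theorem for this step.

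For (iv), with the Killing assumption, Theorem~\ref{theorem negative sectional growth applications} shows $\fol$ is closed. Hence $\Gr\fol$ is a closed regular foliation on the compact manifold $\Gr M$, with compact leaf orbifold of pinched negative curvature. By \cite[Theorem~4.6]{equivariantcaramellotoben}, its orbifold fundamental group grows exponentially. The orbifold fibration $\Gr M\to\Gr M\Slash\Gr\fol$ induces a homomorphism $\pi_1(\Gr M)\to\pi_1^{\mathrm{orb}}(\Gr M\Slash\Gr\fol)$ whose image has finite index, since the leaves are compact. Exponential growth of a finite-index subgroup then pulls back, because growth of a group dominates growth of its quotients.
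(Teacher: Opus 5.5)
\textbf{There is a gap in item (iii).} Your overall strategy matches the paper's: pass to $\Gr\fol$, transport the bounds by Corollary~\ref{corollary transverse curvature bounds resolution}, and descend by Lemma~\ref{lemma quotient comparison applications}. The paper, however, cites regular theorems that already carry the finite-$\pi_1$ hypothesis. You instead treat that hypothesis as serving only to make the lift to the universal cover Killing, and then rebuild Killing-case arguments.

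This fails in (iii). The criterion you fall back on, that a Killing foliation on a compact manifold with $\chi\neq0$ has $\mathfrak a=0$, is false as stated, and so is the heuristic behind it. Take a closed genus-two surface $\Sigma$ and a homomorphism $\rho\colon\pi_1(\Sigma)\to\mathrm{SO}(2)$ with dense image, acting on $\mathbb S^2$ by rotations about an axis. Consider the suspension foliation of $\widetilde\Sigma\times_\rho\mathbb S^2$.
\begin{enumerate}
\item The rotation field commutes with the holonomy, so this is a compact regular Killing foliation with $\mathfrak a\cong\RR$.
\item Its closed leaves are the two polar sections, while every other leaf closure is three-dimensional.
\item Its Euler characteristic is $\chi=\chi(\Sigma)\chi(\mathbb S^2)=-4\neq0$.
\end{enumerate}
So leaf-closure dimension need not be constant, and structural fields do vanish, namely on closed leaves. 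Euler characteristic considerations then at best give a closed leaf, not closedness. The fundamental-group hypothesis therefore enters the closedness argument itself, not just the Killing property. On the simply connected cover you need exactly the regular theorem with that hypothesis, \cite[Theorem~F]{positivecaramellotoben}, which the paper applies directly to the compact resolution.

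\textbf{Items (i) and (ii) have a compactness problem.} There $M$, and hence $\Gr M$, need not be compact; products with $\RR$ are examples. The arguments you propose to rerun start from ambient compactness. The Synge--Weinstein argument of Theorem~\ref{theorem positive curvature applications}(ii) relies on deforming a Killing foliation on a compact manifold to a closed one. The proof of Theorem~\ref{theorem negative curvature applications} uses Jung--Richardson's vanishing theorem on a compact manifold. The ``closure-space versions'' you invoke are precisely what is missing.

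For (i), the paper simply cites \cite[Theorem~C]{positivecaramellotoben} for the complete resolution. For (ii), your first option has the same defect, but your second can be made rigorous without ambient compactness. On the finite Killing cover, whose closure space is still compact, take a structural field $X$. The basic function $|X|^2$ is invariant under the abelian structural action, so it is constant on leaf closures and attains a maximum. The Bochner formula on a local quotient with $\ric^\intercal\leq-\lambda$ then forces $X=0$. This replaces the paper's appeal to \cite[Corollary~4.4]{equivariantcaramellotoben}.

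\textbf{Item (iv) is fine.} Note that $\pi_1(\Gr M)\to\pi_1^{\mathrm{orb}}$ is actually surjective because the leaves are connected; this is the surjection the paper uses. Compactness of the leaves is not the reason for the finite-index property you invoke.
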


\begin{proof}
For \textup{(i)--(iii)}, apply, respectively, \cite[Theorem~C]{positivecaramellotoben}, \cite[Corollary~4.4]{equivariantcaramellotoben}, and \cite[Theorem~F]{positivecaramellotoben} to the complete regular resolution. Compactness of the closure space in \textup{(ii)} transfers by Lemma~\ref{lemma quotient comparison applications}.
Compactness of the ambient manifold in \textup{(iii)} follows from properness. Closed leaves and closedness then descend by the same lemma.

For \textup{(iv)}, Theorem~\ref{theorem negative sectional growth applications} makes $\fol$ closed and gives exponential growth of $\pi_1^{\mathrm{orb}}(M\Slash\fol)$. The regular resolution has the same leaf orbifold and a surjection
\[\pi_1(\Gr M)\longrightarrow\pi_1^{\mathrm{orb}}(M\Slash\fol)\]
(see \cite[Section~2.1, equation~(2.1), and Theorem~4.6]{equivariantcaramellotoben}). A finitely generated group surjecting onto an exponentially growing group also grows exponentially, proving the assertion.
\end{proof}

The distinction between $M$ and $\Gr M$ is necessary in using this argument. For example, the orbit foliation of the standard circle action on $\mathbb{S}^2$ is infinitesimally polar and singular Killing. Its geometric resolution is the real projective blow-up at the two poles, which is a Klein bottle: a neighborhood of each exceptional circle is a Möbius band. Hence $\pi_1(\mathbb{S}^2)$ is trivial and $\chi(\mathbb{S}^2)=2$, whereas the resolution has infinite fundamental group and Euler characteristic zero. Thus finiteness of $\pi_1(M)$ and $\chi(M)\neq0$ do not by themselves supply the corresponding hypotheses upstairs.

\section*{Acknowledgements}
\noindent L.~R.~S. acknowledges support from the Fundação de Amparo à Pesquisa
do Estado de São Paulo (FAPESP), grant 2025/27715-7.


\begin{thebibliography}{99}

\bibitem{alexdesing}
Alexandrino, M.~M.
\emph{Desingularization of singular Riemannian foliation},
Geom. Dedicata \textbf{149} (2010), 397--416.
\url{https://doi.org/10.1007/s10711-010-9489-4}.

\bibitem{surveyalexcaramello}
Alexandrino, M.~M. and Caramello Jr., F.~C.
\emph{Leaf closures of Riemannian foliations: a survey on topological
and geometric aspects of Killing foliations},
Expo. Math. \textbf{40} (2022), 177--230.
\url{https://doi.org/10.1016/j.exmath.2021.11.002}.

\bibitem{alexlytchak}
Alexandrino, M.~M. and Lytchak, A.
\emph{On smoothness of isometries between orbit spaces},
in: Riemannian Geometry and Applications---Proceedings RIGA (2011),
Ed. Univ. Bucure\c{s}ti, Bucharest, 2011, 17--28.
\href{https://arxiv.org/abs/1107.2563}{arXiv:1107.2563}.

\bibitem{alexradeschiclosure}
Alexandrino, M.~M. and Radeschi, M.
\emph{Closure of singular foliations: the proof of Molino's conjecture},
Compos. Math. \textbf{153} (2017), 2577--2590.
\url{https://doi.org/10.1112/S0010437X17007485}.

\bibitem{alexradeschiflows}
Alexandrino, M.~M. and Radeschi, M.
\emph{Smoothness of isometric flows on orbit spaces and applications},
Transform. Groups \textbf{22} (2017), 1--27.
\url{https://doi.org/10.1007/s00031-016-9386-5}.

\bibitem{borzellinoRicci}
Borzellino, J.~E.
\emph{Orbifolds with lower Ricci curvature bounds},
Proc. Amer. Math. Soc. \textbf{125}(10) (1997), 3011--3018.
\url{https://doi.org/10.1090/S0002-9939-97-04046-X}.

\bibitem{boualem}
Boualem, H.
\emph{Feuilletages riemanniens singuliers transversalement int\'egrables},
Compos. Math. \textbf{95} (1995), 101--125.
\href{https://www.numdam.org/item/CM_1995__95_1_101_0/}{Numdam}.

\bibitem{caramellosingularequivariant}
Caramello Jr., F.~C.
\emph{Equivariant basic cohomology of singular Riemannian foliations},
Monatsh. Math. \textbf{203} (2024), 779--808.
\url{https://doi.org/10.1007/s00605-023-01902-6}.

\bibitem{spherecaramelloneubauer}
Caramello Jr., F.~C. and Neubauer, F.~A.
\emph{Transverse sphere theorems for Riemannian foliations},
Preprint,
\href{https://arxiv.org/abs/2505.01378v2}{arXiv:2505.01378v2}
(2026).


\bibitem{blupcaramello}
Caramello Jr., F.~C. and Ribeiro dos Santos, L.
\emph{Blown-up singular Riemannian foliations},
Preprint,
\href{https://arxiv.org/abs/2512.07069v4}{arXiv:2512.07069v4}
(2026).


\bibitem{equivariantcaramellotoben}
Caramello Jr., F.~C. and T\"oben, D.
\emph{Equivariant basic cohomology under deformations},
Math. Z. \textbf{299} (2021), 2461--2482.
\url{https://doi.org/10.1007/s00209-021-02768-w}.

\bibitem{positivecaramellotoben}
Caramello Jr., F.~C. and T\"oben, D.
\emph{Positively curved Killing foliations via deformations},
Trans. Amer. Math. Soc. \textbf{372} (2019), 8131--8158.
\url{https://doi.org/10.1090/tran/7893}.

\bibitem{haefligersalem}
Haefliger, A. and Salem, E.
\emph{Riemannian foliations on simply connected manifolds
and actions of tori on orbifolds},
Illinois J. Math. \textbf{34} (1990), 706--730.
\url{https://doi.org/10.1215/ijm/1255988064}.

\bibitem{harveysearle}
Harvey, J. and Searle, C.
\emph{Orientation and symmetries of Alexandrov spaces
with applications in positive curvature},
J. Geom. Anal. \textbf{27} (2017), 1636--1666.
\url{https://doi.org/10.1007/s12220-016-9734-7}.

\bibitem{jungrichardsongallotmeyer}
Jung, S.~D. and Richardson, K.
\emph{Transverse conformal Killing forms
and a Gallot--Meyer theorem for foliations},
Math. Z. \textbf{270} (2012), 337--350.
\url{https://doi.org/10.1007/s00209-010-0800-8}.

\bibitem{lytchak}
Lytchak, A.
\emph{Geometric resolution of singular Riemannian foliations},
Geom. Dedicata \textbf{149} (2010), 379--395.
\url{https://doi.org/10.1007/s10711-010-9488-5}.

\bibitem{lytchakthorbergsson}
Lytchak, A. and Thorbergsson, G.
\emph{Curvature explosion in quotients and applications},
J. Differential Geom. \textbf{85} (2010), 117--140.
\url{https://doi.org/10.4310/jdg/1284557927}.

\bibitem{moerdijkmrcun}
Moerdijk, I. and Mr\v{c}un, J.
\emph{Introduction to Foliations and Lie Groupoids},
Cambridge Studies in Advanced Mathematics \textbf{91},
Cambridge University Press, Cambridge, 2003.


\bibitem{molinodesing}
Molino, P.
\emph{D\'esingularisation des feuilletages riemanniens},
Amer. J. Math. \textbf{106} (1984), 1091--1106.
\url{https://doi.org/10.2307/2374274}.

\bibitem{molino}
Molino, P.
\emph{Riemannian Foliations},
Progress in Mathematics \textbf{73},
Birkh\"auser, Boston, 1988.


\bibitem{petersen}
Petersen, P.
\emph{Riemannian Geometry},
3rd ed., Graduate Texts in Mathematics \textbf{171},
Springer, Cham, 2016.


\bibitem{schwarzlifting}
Schwarz, G.~W.
\emph{Lifting smooth homotopies of orbit spaces},
Publ. Math. IH\'ES \textbf{51} (1980), 37--135.
\url{https://doi.org/10.1007/BF02684776}.

\bibitem{tobenparallel}
T\"oben, D.
\emph{Parallel focal structure and singular Riemannian foliations},
Trans. Amer. Math. Soc. \textbf{358} (2006), 1677--1704.
\url{https://doi.org/10.1090/S0002-9947-05-03829-8}.

\end{thebibliography}
\end{document}